\let\counterwithin\relax
\titleformat{\section}{\normalfont\sffamily\Large\bfseries}{\thesection}{1em}{}
\titleformat{\subsection}{\normalfont\sffamily\large\bfseries}{\thesubsection}{1em}{}
\titleformat{\subsubsection}{\normalfont\sffamily\normalsize\bfseries}{\thesubsubsection}{1em}{}
\titleformat{\paragraph}[block]{\normalfont\sffamily\normalsize\bfseries}{\theparagraph}{1em}{}
\renewcommand{\lstlistlistingname}{List of listings}
\let\newcounter\@gobble\let\setcounter\@gobbletwo
\let\c@loldepth\@ne
\let\l@lstlisting\l@listings
\renewcommand\section{\suppressfloats\@startsection {section}{1}{\z@}%
  {-3.5ex \@plus -1ex \@minus -.2ex}%
  {2.3ex \@plus.2ex}%
  {\normalfont\sffamily\Large\bfseries}}
\renewcommand\subsection{\@startsection{subsection}{2}{\z@}%
  {-3.25ex\@plus -1ex \@minus -.2ex}%
  {1.5ex \@plus .2ex}%
  {\normalfont\sffamily\large\bfseries}}
\renewcommand\subsubsection{\@startsection{subsubsection}{3}{\z@}%
  {-3.25ex\@plus -1ex \@minus -.2ex}%
  {1.5ex \@plus .2ex}%
  {\normalfont\normalsize\sffamily\bfseries}}
\definecolor{codegreen}{rgb}{0,0.6,0}
\definecolor{codegray}{rgb}{0.5,0.5,0.5}
\definecolor{codepurple}{rgb}{0.58,0,0.82}
\definecolor{backcolour}{rgb}{0.95,0.95,0.92}
\definecolor{codeblue}{RGB}{0,160,238}
\definecolor{codeorange}{RGB}{217,83,25}
\lstdefinestyle{mystyle}{
    language=C++,
    commentstyle=\color{codeblue},
    keywordstyle=\color{codeorange},
    numberstyle=\tiny\color{codegray},
    breakatwhitespace=false,         
    breaklines=true,                 
    captionpos=b,                    
    keepspaces=true,                 
    numbers=left,                    
    numbersep=5pt,                  
    showspaces=false,                
    showstringspaces=false,
    showtabs=false,                  
    tabsize=2,
    basicstyle={\footnotesize\ttfamily},
    xleftmargin=.1\textwidth, 
    xrightmargin=.1\textwidth,
    morekeywords={omp,offload,target,parallel,simd,critical,atomic,linear,declare,uniform,reduction,offload_transfer},
    framextopmargin=2pt,
    frame=tb,
    escapeinside={(|}{|)},
    moredelim=[is][\color{codeorange}]{|*}{*|}
}
\newcommand{\vect}[1]{\ensuremath{{\bm{#1}}}}
\newcommand{\vf}{{\vect{f}}}
\newcommand{\vg}{{\vect{g}}}
\newcommand{\vh}{{\vect{h}}}
\newcommand{\vn}{{\vect{n}}}
\newcommand{\vr}{{\vect{r}}}
\newcommand{\vs}{{\vect{s}}}
\newcommand{\vv}{{\vect{v}}}
\newcommand{\vw}{{\vect{w}}}
\newcommand{\vx}{{\vect{x}}}
\newcommand{\vy}{{\vect{y}}}
\newcommand{\vz}{{\vect{z}}}
\newcommand{\valpha}{{\vect{\alpha}}}
\newcommand{\vpsi}{{\vect{\psi}}}
\newcommand{\vxi}{{\vect{\xi}}}
\newcommand{\vzero}{{\vect{0}}}
\newcommand{\dif}{\ensuremath{{\mathrm{d}}}}
\DeclareMathOperator{\diver}{div}
\DeclareMathOperator{\dist}{dist}
\DeclareMathOperator{\supp}{supp}
\DeclareMathOperator{\sign}{sign}
\DeclareMathOperator{\curl}{\mathbf{curl}}
\DeclareMathOperator{\erf}{erf}
\newcommand{\dtrace}{\ensuremath{\gamma_{0,\Sigma}^\mathrm{int}}}
\newcommand{\dtraceext}{\ensuremath{\gamma_{0,\Sigma}^\mathrm{ext}}}
\newcommand{\dtracem}{\ensuremath{\gamma_{0,\Sigma_m}^\mathrm{int}}}
\newcommand{\rinvdtrace}{\ensuremath{E_\Sigma}}
\newcommand{\rinvdtraceend}{\ensuremath{E_{\Sigma;,0}}}
\newcommand{\ntrace}{\ensuremath{\gamma_{1,\Sigma}^\mathrm{int}}}
\newcommand{\ntraceext}{\ensuremath{\gamma_{1,\Sigma}^\mathrm{ext}}}
\newcommand{\ntracem}{\ensuremath{\gamma_{1,\Sigma_m}^\mathrm{int}}}
\newcommand{\vdtrace}{\ensuremath{{\vect{\gamma}}_{0,\Sigma}^\mathrm{int}}}
\newcommand{\scurlm}{\ensuremath{\curl_{\Sigma_m}}}
\newcommand{\scurl}{\ensuremath{\curl_{\Sigma}}}
\newcommand{\nvecm}{\ensuremath{\vn_{m}}}
\renewcommand*\env@matrix[1][\arraystretch]{%
  \edef\arraystretch{#1}%
  \hskip -\arraycolsep
  \let\@ifnextchar\new@ifnextchar
  \array{*\c@MaxMatrixCols c}}
\theoremstyle{plain}
\newtheorem{theorem}{Theorem}[section]
\newtheorem{corollary}[theorem]{Corollary}
\newtheorem{lemma}[theorem]{Lemma}
\newtheorem{proposition}[theorem]{Proposition}
\theoremstyle{definition}
\newtheorem{definition}[theorem]{Definition}
\theoremstyle{remark}
\newtheorem{remark}[theorem]{Remark}
\numberwithin{figure}{section}
\numberwithin{table}{section}
\newif\ifshort \shorttrue 
\title{\sffamily\bfseries An integration by parts formula for the bilinear form of the hypersingular boundary integral operator for the transient heat equation in three spatial dimensions}
\author{Raphael Watschinger, G\"unther Of \\ Institute of Applied Mathematics, Graz University of Technology. \\ Steyrergasse 30, A-8010 Graz, Austria \\ \texttt{watschinger@math.tugraz.at}, \texttt{of@tugraz.at} }
\date{April 30, 2021}
\begin{document}

\pagenumbering{roman} 

\maketitle

\pagestyle{plain}
\setcounter{page}{1}
\pagenumbering{arabic}

\begin{abstract}
  While an integration by parts formula for the bilinear form of the hypersingular boundary integral operator for the transient heat equation in three spatial dimensions is available in the literature, a proof of this formula seems to be missing. Moreover, the available formula contains an integral term including the time derivative of the fundamental solution of the heat equation, whose interpretation is difficult at second glance. To fill these gaps we provide a rigorous proof of a general version of the integration by parts formula and an alternative representation of the mentioned integral term, which is valid for a certain class of functions including the typical tensor-product discretization spaces.
\end{abstract}

\noindent \emph{Keywords:} Heat equation, boundary element method, space-time, hypersingular operator, integration by parts formula

\noindent \emph{2020 MSC:} 65M38, 45E10

\section{Introduction}
The transient heat equation is the archetype of a parabolic partial differential equation in space and time. Nevertheless, it has quite a few things in common with elliptic partial differential equations in space like the Laplace equation, when it comes to integral equations. For example, one can define the single layer boundary integral operator $V$ and hypersingular boundary integral operator $D$ for the heat equation as integral operators on a lateral space-time boundary ${\Sigma = \partial \Omega \times (0,T) \subset \mathbb{R}^d \times \mathbb{R}}$ in a similar way as for elliptic partial differential equations. Surprisingly, one can even show that both operators are elliptic in suitable anisotropic Sobolev spaces~\cite{ArnNoo1989,Cos1990}. This makes it particularly interesting to consider Galerkin variational formulations for the solution of integral equations of the form $Vq = g$ or $D u = h$ and related boundary element methods.

Evaluating the hypersingular operator $D$ for the heat equation is as challenging as in the elliptic case. The problem is that the operator cannot be expressed as an integral in a classical sense. In case of the Laplace equation and other elliptic partial differential equations there exist several approaches to regularize the hypersingular operator. One particularly interesting strategy is to regularize the bilinear form associated with $D$ instead of the operator itself. Some sort of integration by parts is applied and eventually leads to a representation in terms of weakly singular integrals in a discrete setting. This approach has been applied to a wide class of partial differential equations, e.g., for the Laplace equation~\cite{Nedelec}, the Helmholtz equation~\cite{Maue, NedelecBook}, time-harmonic Maxwell equations~\cite{NedelecBook} and linear elastostatics~\cite{Han, Kupradze}.

For the hypersingular operator of the heat equation or rather the corresponding bilinear form  an integration by parts formula is also available. In \cite{Cos1990} a formula for the 2+1D case, i.e.~two space dimensions plus the additional time dimension, is provided together with an outline of its proof. A formula for the 3+1D case can be found for example in \cite{Doh2019} and \cite{Mes2014}, but to the best of our knowledge no proof is provided in the literature. In addition, the formula in the mentioned works contains a boundary integral including the time derivative of the fundamental solution of the heat equation, which per se is locally not integrable on the considered integration domain. This makes it difficult to understand the formula in a general setting. In this work we want to fill these gaps by giving a rigorous proof of the integration by parts formula in 3+1D in a rather general form. The problematic integral term with the time derivative will appear here in a general form, whose evaluation is again difficult for non-smooth functions. However, we will derive an integral representation which is valid for a certain class of functions including the ones typically used for discretization and overcomes the problem of the locally non integrable time derivative.

The remaining paper is structured as follows. Section~\ref{sec:preliminaries} serves as preparation for the rest of the paper. Here we introduce the relevant function spaces and provide a few results which will be used in the main proofs. In Section~\ref{sec:heat_equation} we focus on the transient heat equation. We discuss solvability aspects and introduce the boundary integral operators, including the hypersingular operator $D$. As a side result we give a proof of Theorem~\ref{thm:extended_maximum_principle}, which is a generalization of the classical parabolic maximum principle and is needed later on. In Section~\ref{sec:ibp_formula} we finally consider the general integration by parts formula for the bilinear form of $D$, which is formulated in Theorem~\ref{thm:general_ibp_formula} and proven in Section~\ref{subsec:proof_general_ibp_formula}. The aforementioned 'time derivative term' of this formula, will be further investigated in Section~\ref{sec:bf_time_derivative}. Here we will give the details why the formulation in~\cite{Doh2019,Mes2014} is not adequate in general and provide our alternative in Theorem~\ref{thm:second_bilinear_form_tensor_case}. Section~\ref{sec:conclusion} concludes the paper with a short summary and outlook.

\section{Preliminaries} \label{sec:preliminaries}
In this section we introduce the basic notation used throughout the paper and discuss a few concepts and results which we need for the main proofs of the paper.

\subsection{Anisotropic Sobolev spaces, trace operators and a surface curl} \label{sec:spaces_traces_curl}
For an open set $A \subset \mathbb{R}^d$ we denote by $C(A)$, $C^k(A)$ and $C^k(\overline{A})$ for $k \in \mathbb{N} \cup \{\infty\}$ the usual sets of continuous and $k$ times continuously differentiable functions on $A$ or $\overline{A}$ in the appropriate sense. Functions in $C^\infty(A)$ with compact support are denoted by $C^\infty_c(A)$. By $L^p(A)$, $p \in [1,\infty]$ we denote the standard Lebesgue spaces on $A$. When referring to functions in these spaces, we always mean the related equivalence classes. We use bold face letters to denote spaces containing functions mapping to $\mathbb{R}^n$, whose components are in the respective function space. Whenever we consider such functions, the dimension $n$ is clear from context, so we do not specify it.

Throughout this work, let $\Omega \subset \mathbb{R}^3$ be a bounded Lipschitz domain as in Definition~\ref{def:lipschitz_domain} with boundary~$\Gamma$, let $T > 0$ be finite and $Q:=\Omega \times (0,T)$ be the space-time cylinder with lateral boundary $\Sigma := \Gamma \times (0,T)$. In addition let $\alpha > 0$. All our considerations in this paper are related to the initial boundary value problem for the heat equation
\begin{alignat}{2}
  \frac{\partial}{\partial t}u - \alpha \upDelta u &= 0 \qquad && \text{in } Q, \label{eq:heat_equation} \\
  u(\cdot, 0)   &= 0 && \text{in } \Omega, \label{eq:initial_condition}
\end{alignat}
with an additional Dirichlet or Neumann boundary condition
\begin{subequations} \label{eq:boundary_condition}
\begin{alignat}{2}
  u &= g \qquad &&\text{on } \Sigma, \label{eq:dirichlet_boundary_values} \\ 
  \ntrace u &= h \qquad &&\text{on } \Sigma, \label{eq:neumann_boundary_values}
\end{alignat}
\end{subequations}
respectively. For the study of these problems we consider the anisotropic Sobolev space
\begin{equation*}
  H^{1,1/2}(Q) := L^2(0,T;H^1(\Omega)) \cap H^{1/2}(0,T;L^2(\Omega)),
\end{equation*}
and its subspaces 
\begin{align*}
  H^{1,1/2}_{;0,}(Q) &:= L^2(0,T;H^1(\Omega)) \cap H^{1/2}_{0,}(0,T;L^2(\Omega)), \\
  H^{1,1/2}_{;,0}(Q) &:= L^2(0,T;H^1(\Omega)) \cap H^{1/2}_{,0}(0,T;L^2(\Omega)),
\end{align*}
where we use the standard notation for Bochner spaces and Sobolev spaces. The space $H^{1,1/2}_{;0,}(Q)$ can be interpreted as the subspace of $H^{1,1/2}(Q)$ containing functions which are $0$ at~$t=0$, or, to be more precise, the functions whose extension by zero for $t<0$ is in $H^{1,1/2}(\Omega \times (-\infty,T))$, see e.g.~\cite[Proposition~5.2]{LioMag1972}. The space $H^{1,1/2}_{;,0}(Q)$ contains functions vanishing at $t=T$ in the same sense. Particular representations of the norms of these anisotropic spaces are not needed here, so we only refer to \cite{Cos1990} and \cite{DohNiiSte2019} for them. Note that we use the notation of~\cite{DohNiiSte2019}. In~\cite{Cos1990}, $\widetilde{H}^{1,1/2}(Q)$ and $\accentset{(T)}{H}^{1,1/2}(Q)$ denote the spaces equivalent to $H^{1,1/2}_{;0,}(Q)$ and $H^{1,1/2}_{;,0}(Q)$.

The following density result will be used several times throughout the paper.
\begin{proposition} \label{prop:density_anisotropic_Q}
  The space 
  \begin{equation}
    C^\infty_c(\overline{\Omega}\times(0,T)) 
      := \{ u: u=\widetilde{u}|_Q, \widetilde{u} \in C^\infty_c(\mathbb{R}^3\times(0,T))\}
  \end{equation}
   is dense in $H^{1,1/2}(Q)$, $H^{1,1/2}_{;0,}(Q)$, and $H^{1,1/2}_{;,0}(Q)$.
\end{proposition}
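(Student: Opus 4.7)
The plan is to reduce each density claim to a standard mollification argument on $\mathbb{R}^3\times\mathbb{R}$ by using an appropriate extension, and then to produce approximants whose time support is compactly contained in $(0,T)$. First, I would lift a given $u$ to $\mathbb{R}^3$ in the spatial variable by applying a Stein universal extension operator $E_\Omega: H^1(\Omega)\to H^1(\mathbb{R}^3)$ pointwise in $t$; since this operator is linear and bounded and commutes with translations in time, it induces a bounded extension from $H^{1,1/2}(Q)$ into the analogous anisotropic space on $\mathbb{R}^3\times(0,T)$ and preserves the two subspaces. Once the spatial variable is handled, all subsequent arguments concern only the one-dimensional time variable, taking values in the Hilbert spaces $H^1(\mathbb{R}^3)$ and $L^2(\mathbb{R}^3)$.

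The time step differs slightly among the three spaces. For $u\in H^{1,1/2}_{;0,}(Q)$, by the very definition of $H^{1/2}_{0,}(0,T;L^2(\Omega))$, the extension by zero for $t<0$ lies in the corresponding space on $(-\infty,T)$; a bounded one-sided extension above $T$ (e.g.\ Stein reflection followed by a smooth cutoff) then produces $U\in H^{1,1/2}(\mathbb{R}^3\times\mathbb{R})$ which vanishes for $t<0$. Translating $U(\cdot,\cdot-\delta)$ moves this zero set to $(-\infty,\delta)$, and convergence of translations in $L^2$ and $H^{1/2}$ gives $U(\cdot,\cdot-\delta)|_Q\to u$ in $H^{1,1/2}(Q)$ as $\delta\to0$; a smooth cutoff near $t=T$ (which is harmless because $U$ is only being multiplied by a fixed $C^\infty_c$ function on the right-hand side of its support) then yields compact support in $(0,T)$. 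The case $H^{1,1/2}_{;,0}(Q)$ is symmetric. For the full space $H^{1,1/2}(Q)$, I would use an arbitrary bounded extension in time and then rely on the density of $C^\infty_c((0,T))$ in $H^{1/2}(0,T)$, which holds because $s=1/2$ is the critical trace exponent below which there is no point-evaluation; applied in Bochner form this yields approximants in $C^\infty_c((0,T);H^1(\mathbb{R}^3))$-type classes with simultaneous convergence in $L^2(0,T;H^1(\Omega))$ and $H^{1/2}(0,T;L^2(\Omega))$. Finally, I would mollify the resulting compactly $(0,T)$-supported approximants in all four variables, restrict to $Q$, and obtain a sequence in $C^\infty_c(\overline{\Omega}\times(0,T))$ converging to $u$ in the ambient norm.

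The main technical obstacle is the last claim for the full space $H^{1,1/2}(Q)$: since the Gagliardo seminorm of $H^{1/2}$ is nonlocal and critical, multiplication by a cutoff $\chi_\varepsilon$ supported in $(0,T)$ does not obviously satisfy $\chi_\varepsilon U\to U$ in $H^{1/2}(\mathbb{R};L^2(\mathbb{R}^3))$. A clean way to handle this is to pass to the Fourier-analytic realisation of $H^{1/2}(\mathbb{R};L^2(\mathbb{R}^3))$ and apply dominated convergence to $(1+|\tau|^2)^{1/4}\widehat{(1-\chi_\varepsilon)U}(\tau,\cdot)$; the uniform bound $|1-\chi_\varepsilon|\le 1$ together with pointwise convergence to zero a.e.\ in $t$ gives the result. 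All remaining steps—Stein extension, translation, mollification, restriction—are standard and cause no difficulty beyond careful bookkeeping of the anisotropic norms.
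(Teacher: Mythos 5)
Your route --- Stein extension in the spatial variable, followed by a one-dimensional argument in time using extension, translation, cutoff, and mollification --- is genuinely different from the paper's proof, which reduces all three density claims to the density of $C^\infty(\overline{\Omega})$ in $H^1(\Omega)$ and of $C^\infty_c(0,T)$ (respectively $C^\infty_c((0,T])$, $C^\infty_c([0,T))$) in $H^{1/2}(0,T)$ (respectively $H^{1/2}_{0,}(0,T)$, $H^{1/2}_{,0}(0,T)$) via tensor product arguments, citing Grisvard and Zank for these one-dimensional facts. Your approach trades the tensor-product machinery for a direct construction, but it must then confront exactly the same borderline one-dimensional density question at the endpoints of $(0,T)$ --- and that is where the proposal has a genuine gap.

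The dominated convergence argument you offer for the critical cutoff is not valid. If your $U$ is a proper bounded extension of $u$ to $\mathbb{R}$ (not vanishing outside $(0,T)$), then $(1-\chi_\varepsilon)U$ does \emph{not} converge to zero pointwise a.e.\ in $t$; it converges to $(1-\mathbbm{1}_{(0,T)})U \neq 0$, so the premise of the dominated convergence argument already fails. If instead you take $U$ to be the extension of $u$ by zero, then $U \in H^{1/2}(\mathbb{R};L^2)$ is \emph{not} guaranteed for general $u \in H^{1/2}(0,T;L^2)$ --- that membership characterizes the strictly smaller space $H^{1/2}_{00}$ --- so the Fourier-side representation of the norm is not available to begin with. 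Even setting these aside, there is no integrable majorant, uniform in $\varepsilon$, for $(1+|\tau|^2)^{1/2}\|\widehat{(1-\chi_\varepsilon)U}(\tau,\cdot)\|^2_{L^2}$: the Fourier transform of a product is a convolution, the weight $(1+|\tau|^2)^{1/4}$ is not compatible with convolution bounds, and the tails of $\widehat{\chi_\varepsilon}$ are precisely where the critical seminorm accumulates. The same difficulty, which you isolate only for $H^{1,1/2}(Q)$, also appears at $t=T$ in your treatment of $H^{1,1/2}_{;0,}(Q)$ (and at $t=0$ for $H^{1,1/2}_{;,0}(Q)$): a \emph{fixed} $C^\infty_c$ cutoff there does not give a sequence converging to $u$, and a shrinking cutoff reintroduces the critical obstruction. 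The correct statement --- that $C^\infty_c(0,T)$ is dense in $H^{1/2}(0,T)$ intrinsically, with the endpoint-layer contribution vanishing --- is true but delicate (Grisvard, Theorem~1.4.2.4), and your construction becomes sound once you invoke this and the corresponding results for $H^{1/2}_{0,}(0,T)$, $H^{1/2}_{,0}(0,T)$ as the paper does, rather than trying to reprove them by dominated convergence on the full-line Fourier transform.
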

\begin{proof}[Sketch of the proof]
  In \cite[Proof of Lemma 2.22]{Cos1990} it is mentioned, that 
  \begin{equation*}
    C^\infty_c(\overline{\Omega}\times(0,T]) 
      := \{u: u=\widetilde{u}|_Q, \widetilde{u} \in C^\infty_c(\mathbb{R}^3\times(0,\infty)\}
  \end{equation*}
  is dense in $H^{1,1/2}_{;0,}(Q)$, and that this follows from the density of $C^\infty(\overline{\Omega})$ in~$H^1(\Omega)$ and the density of $C^\infty_c((0,T])=\{f : f = \widetilde{f}|_{(0,T)}, \widetilde{f} \in C^\infty_c(0,\infty)\}$ in $H^{1/2}_{0,}(0,T)$ by tensor product arguments. The same tensor product arguments can be used to show the three density results stated above, since $C^\infty_c(0,T)$ is dense in $H^{1/2}(0,T)$, see e.g.~\cite[Theorem~1.4.2.4]{Gri2011}, and also in $H^{1/2}_{0,}(0,T)$ and $H^{1/2}_{,0}(0,T)$, see \cite[Theorem~2.2.2]{Zan2019}.
\end{proof}
Following \cite{DohNiiSte2019}, we consider in addition the anisotropic Sobolev space 
\begin{equation*}
  H^{1/2,1/4}(\Sigma) := L^2(0,T;H^{1/2}(\Gamma)) \cap H^{1/4}(0,T;L^2(\Gamma))
\end{equation*}
on $\Sigma$ with the norm
\begin{equation*}
  \|\varphi\|_{H^{1/2,1/4}(\Sigma)} := \left( \int_0^T \|\varphi(\cdot,t)\|^2_{H^{1/2}(\Gamma)}\,\dif t 
    + \int_0^T \int_0^T \frac{\|\varphi(\cdot,t) - \varphi(\cdot,\tau)\|^2_{L^2(\Gamma)}}{|t-\tau|^{3/2}} \,\dif \tau\,\dif t
  \right)^{1/2},
\end{equation*}
where $\|\cdot\|_{H^{1/2}(\Gamma)}$ denotes the usual Sobolev--Slobodeckij norm on $\Gamma$, and its dual
\begin{equation*}
  H^{-1/2,-1/4}(\Sigma) = (H^{1/2,1/4}(\Sigma))'.
\end{equation*}
By $\langle \cdot, \cdot \rangle_\Sigma$ we denote the duality product on $H^{-1/2,-1/4}(\Sigma) \times H^{1/2,1/4}(\Sigma)$ which is understood as the continuous extension of the $L^2$ inner product 
\begin{equation*}
  \langle \psi, \varphi \rangle_{L^2(\Sigma)} = \int_0^T \int_\Gamma \psi(\vx,t) \varphi(\vx,t)\,\dif\vs_\vx\,\dif t  
\end{equation*}
from $L^2(\Sigma) \times H^{1/2,1/4}(\Sigma)$ to $H^{-1/2,-1/4}(\Sigma) \times H^{1/2,1/4}(\Sigma)$. 

Our interest in the space $H^{1/2,1/4}(\Sigma)$ is explained by the fact that it can be interpreted as the trace space of $H^{1,1/2}_{;0,}(Q)$. Indeed, the following theorem holds.
\begin{theorem}[\!\!{\cite[Theorem 2.1]{LioMag1972}, \cite[Lemma 2.4]{Cos1990}}] \label{thm:dtrace}
  There exists a unique continuous operator $\dtrace$ from $H^{1,1/2}_{;0,}(Q)$ to $H^{1/2,1/4}(\Sigma)$ such that $\dtrace u = u|_{\Sigma}$ for all $u \in C^\infty_c(\overline{\Omega}\times(0,T))$. This operator is surjective. 
\end{theorem}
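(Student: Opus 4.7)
The plan is to prove the two assertions (existence with continuity, and surjectivity) separately, with the density result of Proposition~\ref{prop:density_anisotropic_Q} bridging smooth functions and the abstract space. Since $C^\infty_c(\overline{\Omega}\times(0,T))$ is dense in $H^{1,1/2}_{;0,}(Q)$, it suffices to establish the a priori estimate
\begin{equation*}
  \|u|_\Sigma\|_{H^{1/2,1/4}(\Sigma)} \le C\,\|u\|_{H^{1,1/2}(Q)} \qquad \forall\,u\in C^\infty_c(\overline{\Omega}\times(0,T)),
\end{equation*}
and $\dtrace$ is then defined by unique continuous extension.

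For the estimate I would split the target norm into its two natural components. The $L^2(0,T;H^{1/2}(\Gamma))$-part is immediate: the classical elliptic trace theorem $\gamma\colon H^1(\Omega)\to H^{1/2}(\Gamma)$ applied slice-wise in $t$ and integrated yields the bound in terms of $\|u\|_{L^2(0,T;H^1(\Omega))}$. The $H^{1/4}(0,T;L^2(\Gamma))$-part is the delicate one, because the pointwise-in-time trace $L^2(\Omega)\to L^2(\Gamma)$ is not bounded, so one cannot proceed slice-wise. I would reduce to a model problem by localizing with a finite partition of unity subordinate to Lipschitz charts for $\Gamma$, flattening the boundary, and extending $u$ by zero for $t\le 0$ (the extension is still in $H^{1,1/2}$ because $u\in H^{1,1/2}_{;0,}$). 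In the resulting half-space problem on $\mathbb{R}^3_+\times\mathbb{R}$, a partial Fourier transform in the tangential variables $\vx'$ and time $t$ reduces the claim to a one-dimensional trace estimate on each fiber, of the type
\begin{equation*}
  \int_{\mathbb{R}^3}\bigl(|\vxi'|+|\tau|^{1/2}\bigr)\,|\hat v(\vxi',\tau,0)|^2\,\dif\vxi'\,\dif\tau
  \;\le\; C\,\|v\|_{H^{1,1/2}(\mathbb{R}^3_+\times\mathbb{R})}^2,
\end{equation*}
which follows from Plancherel, Cauchy--Schwarz and an elementary one-dimensional computation using the anisotropic weight $|\vxi'|^2+|\tau|$ characterizing $H^{1,1/2}$ by Fourier transform.

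Surjectivity is proved by constructing an explicit right-inverse. The key observation is that $H^{1/2,1/4}(\Sigma)$ carries no vanishing condition at $t=0$, because the critical time-regularity for traces at an endpoint is $1/2$ and $1/4<1/2$; hence every $\varphi\in H^{1/2,1/4}(\Sigma)$ can be extended by zero to $\Gamma\times(-\infty,T)$ without leaving the space. Using the same local charts as above, I would then construct a lifting into the half-space that vanishes for $t\le0$ by means of a Fourier multiplier in the normal direction (for instance, the Poisson-type kernel associated with the anisotropic symbol $|\vxi'|^2+i\tau$, or directly a convolution involving the heat kernel), patch the local liftings back with the partition of unity, and verify that the result belongs to $H^{1,1/2}_{;0,}(Q)$ with $\dtrace$ restoring $\varphi$ on $\Sigma$.

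The main obstacle is the $H^{1/4}$-in-time estimate, since the standard spatial trace theorem is not available at the $L^2$ level in space. Everything else (the density step, the $L^2$-in-time estimate, and the assembly of a surjective inverse from local half-space liftings) is routine once this anisotropic Fourier bound is in place.
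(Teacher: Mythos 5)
The paper does not prove this theorem; it quotes it from Lions--Magenes and Costabel without proof. Your sketch is correct and reproduces the classical Lions--Magenes argument for anisotropic trace theorems: treat the $L^{2}(0,T;H^{1/2}(\Gamma))$ part slice-wise by the elliptic trace theorem, handle the $H^{1/4}(0,T;L^{2}(\Gamma))$ part by localizing in Lipschitz charts, flattening, extending by zero in time (valid precisely because $u\in H^{1,1/2}_{;0,}(Q)$), and proving the fibered Fourier estimate on the half-space, and obtain surjectivity from a causal Poisson/heat-kernel lifting together with the observation that $1/4<1/2$ so no temporal compatibility condition is needed on $H^{1/2,1/4}(\Sigma)$. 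The only cosmetic point worth tightening is that the right-hand side of your a priori estimate should carry the $H^{1,1/2}_{;0,}(Q)$ norm (which is what the zero extension in time actually produces), since that is the space the trace operator acts on; with that adjustment the argument is exactly the one underlying the cited references.
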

\begin{remark}
  The trace operator $\dtrace$ can also be considered as a surjective operator from $H^{1,1/2}(Q)$ or~$H^{1,1/2}_{;,0}(Q)$ to $H^{1/2,1/4}(\Sigma)$. In a slight abuse of notation, we denote all three operators by $\dtrace$.
\end{remark}
\begin{corollary} \label{cor:right_inverse_dtrace}
  There exist continuous operators
  \begin{align*}
    \rinvdtrace&: H^{1/2,1/4}(\Sigma) \rightarrow H^{1,1/2}(Q), \\
    \rinvdtraceend&: H^{1/2,1/4}(\Sigma) \rightarrow H^{1,1/2}_{;,0}(Q),
  \end{align*}
  which are right-inverses of the operator $\dtrace$ on the respective space on $Q$.
\end{corollary}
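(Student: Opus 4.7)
The plan is to deduce both right-inverses from a standard application of the open mapping (bounded inverse) theorem to the trace operator. The key observation is that the anisotropic Sobolev spaces $H^{1,1/2}(Q)$, $H^{1,1/2}_{;0,}(Q)$, $H^{1,1/2}_{;,0}(Q)$, and $H^{1/2,1/4}(\Sigma)$ are Hilbert spaces, being intersections of Hilbert spaces equipped with the natural sum-of-squares inner product. Moreover, by Theorem~\ref{thm:dtrace} together with the subsequent remark, $\dtrace$ is continuous and surjective when viewed as a map from any of $H^{1,1/2}_{;0,}(Q)$ or $H^{1,1/2}_{;,0}(Q)$ onto $H^{1/2,1/4}(\Sigma)$.

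Fix one such domain space $X$. Since $\dtrace|_X$ is continuous, its kernel $K$ is closed in $X$, and the orthogonal decomposition $X = K \oplus K^\perp$ is available. The restriction $\dtrace|_{K^\perp}\colon K^\perp \to H^{1/2,1/4}(\Sigma)$ is then a continuous linear bijection: injectivity follows from $K \cap K^\perp = \{0\}$, and surjectivity from the orthogonal decomposition combined with the surjectivity of $\dtrace|_X$. The bounded inverse theorem furnishes a continuous linear inverse from $H^{1/2,1/4}(\Sigma)$ onto $K^\perp \hookrightarrow X$, which is the sought-after continuous right-inverse of $\dtrace$ with values in $X$.

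Applying this construction with $X = H^{1,1/2}_{;,0}(Q)$ gives $\rinvdtraceend$ directly. Applying it with $X = H^{1,1/2}_{;0,}(Q)$ and postcomposing with the continuous inclusion $H^{1,1/2}_{;0,}(Q) \hookrightarrow H^{1,1/2}(Q)$ yields $\rinvdtrace$. There is no genuine obstacle: the only point that requires attention is the verification that the spaces in question really are Hilbert spaces (so that the orthogonal complement is available), after which the argument is entirely standard. One could equivalently phrase the construction by lifting across the quotient $X/K$, but the orthogonal complement provides a concrete continuous right-inverse without any additional choice.
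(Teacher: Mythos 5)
Your proof is correct, and it is exactly the standard argument that the paper leaves implicit: the anisotropic spaces are Hilbert spaces, $\dtrace$ is a continuous surjection onto $H^{1/2,1/4}(\Sigma)$ (Theorem~\ref{thm:dtrace} and the remark), and restricting to the orthogonal complement of the kernel plus the bounded inverse theorem produces a continuous right inverse. The only superfluous step is the detour through $H^{1,1/2}_{;0,}(Q)$ for $\rinvdtrace$: the remark already asserts surjectivity of $\dtrace$ directly from $H^{1,1/2}(Q)$, so the construction can be applied with $X = H^{1,1/2}(Q)$ outright.
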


For the definition of the Neumann trace operator we introduce the space
\begin{equation*}
  H^{1,1/2}_{;0,}(Q,\partial/\partial t - \alpha \upDelta) = \left\{ u \in H^{1,1/2}_{;0,}(Q) : \frac{\partial }{\partial t} u - \alpha \upDelta u \in L^2(Q) \right\}
\end{equation*}
with the usual norm, see \cite{Cos1990, DohNiiSte2019}, and consider Green's first identity for the heat equation, which reads
\begin{align*}
  \alpha \int_0^T \int_\Gamma (\vn \cdot \nabla u)\, v\,\dif\vs_\vx\,\dif t
  = &-\int_0^T \int_\Omega \bigg(\frac{\partial u}{\partial t}- \alpha \upDelta u\bigg) v\,\dif\vx\,\dif t
    + \alpha\, \int_0^T \int_\Omega \nabla u \cdot \nabla v \,\dif\vx\,\dif t \\
    &+\int_0^T \int_\Omega \frac{\partial u}{\partial t}v \,\dif\vx\,\dif t
\end{align*}
for functions $u \in C^2(\overline{Q})$ and $v \in C^1(\overline{Q})$. As usual, we can use this identity to generalize the Neumann trace $\vn \cdot \nabla u$ for $u \in H^{1,1/2}_{;0,}(Q,\partial/\partial t - \alpha \upDelta)$, if we can ensure that the right-hand side is well-defined and continuous for such $u$ and suitable $v$. This is not immediately clear for the last integral on the right-hand side, but is established in the following proposition.
\begin{proposition}[\!\!{\cite[cf.~Lemma 2.6]{Cos1990}}] \label{prop:bilinear_form_d}
  The bilinear form 
  \begin{equation} \label{eq:def_bilinear_form_d}
    d(u,v) := \int_0^T\int_\Omega \frac{\partial u}{\partial t}(\vx,t) v(\vx,t) \,\dif\vx\,\dif t
  \end{equation}
  can be continuously extended from $C^\infty_c(\overline{\Omega}\times(0,T]) \times C^\infty_c(\overline{\Omega}\times[0,T))$ to $H^{1,1/2}_{;0,}(Q) \times H^{1,1/2}_{;,0}(Q)$ and is bounded by a constant $c_d$, which does not depend on $\Omega$. 
\end{proposition}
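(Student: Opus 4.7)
The plan is to reduce the estimate to a one–dimensional Fourier calculation in time, handling the mismatch at the temporal endpoints through suitable extension operators. Fix smooth test functions $u \in C^\infty_c(\overline{\Omega}\times(0,T])$ and $v \in C^\infty_c(\overline{\Omega}\times[0,T))$. Since $u$ vanishes in a neighbourhood of $t=0$, extending by zero to $t<0$ and then by a standard bounded extension (e.g.\ even reflection about $t=T$ with a compact cutoff) to $t>T$ produces $E_u u \in H^{1/2}(\mathbb{R};L^2(\Omega))$ with $E_u u = 0$ on $(-\infty,0)$ and $\|E_u u\|_{H^{1/2}(\mathbb{R};L^2(\Omega))} \le C\,\|u\|_{H^{1,1/2}_{;0,}(Q)}$. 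Because this extension acts only in $t$, the constant $C$ is independent of $\Omega$. Analogously construct $E_v v$ with $E_v v = 0$ on $(T,\infty)$ and an $\Omega$-independent bound in terms of $\|v\|_{H^{1,1/2}_{;,0}(Q)}$.

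The point of this construction is that the distributional pairing $\partial_t E_u u \cdot E_v v$ has support contained in $[0,T]$, so the integral defining $d$ collapses to a pairing on the whole line:
$$d(u,v) \;=\; \int_\Omega \bigl\langle \partial_t E_u u(\vx,\cdot),\,E_v v(\vx,\cdot)\bigr\rangle_{H^{-1/2}(\mathbb{R})\times H^{1/2}(\mathbb{R})}\,\dif\vx.$$
Applying Parseval in time pointwise in $\vx$ to rewrite the inner pairing as $\int_\mathbb{R} i\omega\, \widehat{E_u u}(\vx,\omega)\,\overline{\widehat{E_v v}(\vx,\omega)}\,\dif\omega$, splitting $|\omega| = |\omega|^{1/2}\cdot|\omega|^{1/2}$, and using Cauchy--Schwarz first in $\omega$ and then in $\vx$ produces
$$|d(u,v)| \;\le\; \Bigl(\int_\Omega\!\!\int_\mathbb{R}|\omega|\,|\widehat{E_u u}|^2\,\dif\omega\,\dif\vx\Bigr)^{1/2}\Bigl(\int_\Omega\!\!\int_\mathbb{R}|\omega|\,|\widehat{E_v v}|^2\,\dif\omega\,\dif\vx\Bigr)^{1/2}.$$
The Fourier characterization of $H^{1/2}(\mathbb{R})$ bounds each factor, up to a universal constant, by $\|E_u u\|_{H^{1/2}(\mathbb{R};L^2(\Omega))}$ respectively $\|E_v v\|_{H^{1/2}(\mathbb{R};L^2(\Omega))}$, and these in turn by the corresponding $H^{1,1/2}$ norm of $u$ or $v$. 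Since every constant arises from a one-dimensional operation in the time variable, the resulting $c_d$ is independent of $\Omega$. Proposition~\ref{prop:density_anisotropic_Q} then allows the bilinear form to be extended by continuity from $C^\infty_c(\overline{\Omega}\times(0,T]) \times C^\infty_c(\overline{\Omega}\times[0,T))$ to $H^{1,1/2}_{;0,}(Q) \times H^{1,1/2}_{;,0}(Q)$ with the same bound.

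The main obstacle is that $\partial/\partial t$ maps neither of the involved spaces into $L^2(Q)$, so the naive Cauchy--Schwarz estimate $|d(u,v)|\le \|\partial_t u\|_{L^2(Q)}\|v\|_{L^2(Q)}$ is meaningless here. The remedy, which is what makes the argument work, is to distribute the derivative symmetrically between the two factors — effectively assigning each a half derivative — a step made rigorous through the factorisation of the Fourier symbol. A secondary technicality, handled by the extension operators above, is that neither $u$ nor $v$ individually admits a zero extension to all of $\mathbb{R}$ in $H^{1/2}$; it is only after extending $u$ by zero on the left and $v$ by zero on the right that the combined pairing is supported in $[0,T]$ and coincides with the original integral.
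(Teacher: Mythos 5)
Your proof follows essentially the same route as the paper's: extend $u$ by zero for $t<0$ and by reflection for $t>T$, extend $v$ analogously with the roles of the endpoints swapped, observe that the resulting integral over $\mathbb{R}$ in time coincides with $d(u,v)$, and then apply Plancherel in time together with the Fourier characterization of $H^{1/2}(\mathbb{R};L^2(\Omega))$, splitting $|\omega|=|\omega|^{1/2}|\omega|^{1/2}$ to distribute the derivative. The only cosmetic difference is that you spell out the support argument and the symbol factorisation more explicitly, whereas the paper compresses these into a single line; both yield the same $\Omega$-independence of $c_d$ and conclude by the density result of Proposition~\ref{prop:density_anisotropic_Q}.
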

A sketch of the proof of Proposition~\ref{prop:bilinear_form_d} is given in Section~\ref{sec:appendix_proofs}. Here we continue with the definition of the Neumann trace operator $\ntrace$.
\begin{proposition}[\!\!{\cite[cf.~Lemma 2.16]{Cos1990}}]
  The map 
  \begin{equation*} 
    \ntrace:\ H^{1,1/2}_{;0,}(Q,\partial/\partial t - \alpha \upDelta) \rightarrow H^{-1/2,-1/4}(\Sigma)  
  \end{equation*}
  defined by
  \begin{equation} \label{eq:def_ntrace}
      \langle \ntrace u, \psi \rangle_\Sigma 
        := -\int_0^T \int_\Omega \left( \rinvdtraceend \psi \bigg(\frac{\partial}{\partial t}- \alpha \upDelta \bigg)u
      -\nabla u \cdot \nabla (\rinvdtraceend \psi)\right) \,\dif\vx\,\dif t + d(u,\rinvdtraceend \psi)
  \end{equation}
  for all $u \in H^{1,1/2}_{;0,}(Q,\partial/\partial t - \alpha \upDelta)$ and $\psi \in H^{1/2,1/4}(\Sigma)$ is well-defined and continuous. In particular, it does not depend on the choice of the extension $\rinvdtraceend$ from $H^{1/2,1/4}(\Sigma)$ to $H^{1,1/2}_{;,0}(Q)$. Furthermore, for $u \in C^2(\overline{Q})$ there holds $\ntrace u = \vn \cdot \nabla u|_\Sigma$.
\end{proposition}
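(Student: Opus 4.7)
My plan is to verify in turn that the right-hand side of \eqref{eq:def_ntrace} (i)~is finite and depends continuously on $\psi$ for any fixed choice of $\rinvdtraceend$, (ii)~is independent of that choice, so that $\ntrace u$ is well defined and continuous as an element of $H^{-1/2,-1/4}(\Sigma)$, and (iii)~reduces to $\vn\cdot\nabla u|_\Sigma$ when $u \in C^2(\overline Q)$. The three workhorses are Cauchy--Schwarz on the two $L^2(Q)$ volume integrals, Proposition~\ref{prop:bilinear_form_d} for the bilinear form $d$, and the continuity of $\rinvdtraceend$ from Corollary~\ref{cor:right_inverse_dtrace}.

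For (i), each of the three summands on the right of \eqref{eq:def_ntrace} is estimated separately. The first is bounded by Cauchy--Schwarz, using $\|(\partial/\partial t - \alpha\upDelta)u\|_{L^2(Q)}$ from the definition of the graph space and $\|\rinvdtraceend\psi\|_{L^2(Q)} \le \|\rinvdtraceend\psi\|_{H^{1,1/2}_{;,0}(Q)}$; the second analogously, using $\nabla u, \nabla(\rinvdtraceend\psi) \in L^2(Q)$ from the continuous embedding $H^{1,1/2}_{;0,}(Q) \subset L^2(0,T;H^1(\Omega))$; the third by Proposition~\ref{prop:bilinear_form_d}. Combining these with the continuity of $\rinvdtraceend$ yields
\begin{equation*}
  |\langle \ntrace u, \psi\rangle_\Sigma| \le C\,\|u\|_{H^{1,1/2}_{;0,}(Q,\partial/\partial t-\alpha\upDelta)}\,\|\psi\|_{H^{1/2,1/4}(\Sigma)}
\end{equation*}
with a constant $C$ independent of $u$ and $\psi$.

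For (ii), I would introduce the bilinear form
\begin{equation*}
  B(u,w) := -\int_0^T\!\!\int_\Omega \bigl( w \,(\partial/\partial t-\alpha\upDelta)u - \nabla u\cdot\nabla w\bigr)\,\dif\vx\,\dif t + d(u,w)
\end{equation*}
on $H^{1,1/2}_{;0,}(Q,\partial/\partial t-\alpha\upDelta) \times H^{1,1/2}_{;,0}(Q)$, which is continuous by the estimates of (i). Independence of the extension amounts to $B(u,w)=0$ for every $w \in H^{1,1/2}_{;,0}(Q)$ with $\dtrace w = 0$. For $u \in C^2(\overline Q)$ and $w \in C^\infty_c(\Omega\times[0,T))$ this is precisely the Green's first identity stated just before Proposition~\ref{prop:bilinear_form_d}, whose lateral boundary term vanishes because $w|_\Sigma = 0$. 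I then extend by a two-step approximation: first approximate $w$ in $H^{1,1/2}_{;,0}(Q)$ by elements of $C^\infty_c(\Omega\times[0,T))$ via a tensor-product argument analogous to Proposition~\ref{prop:density_anisotropic_Q}, combined with the density of $C^\infty_c(\Omega)$ in $H^1_0(\Omega)$; second, approximate $u$ in the graph norm by smooth functions via a Friedrichs-type mollification in $\vx$ and $t$ that yields both $u_n \to u$ in $H^{1,1/2}_{;0,}(Q)$ and $(\partial/\partial t - \alpha\upDelta)u_n \to (\partial/\partial t - \alpha\upDelta)u$ in $L^2(Q)$. Continuity of $B$ then transfers the identity to the limit. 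Assertion (iii) is immediate once (i) and (ii) are in hand: for $u \in C^2(\overline Q)$ the same Green's identity rewrites the right-hand side of \eqref{eq:def_ntrace} as the lateral boundary integral of $\vn\cdot\nabla u$ against $\dtrace v = \psi$, identifying $\ntrace u$ with the classical normal derivative.

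The main obstacle I anticipate is the graph-norm approximation of $u$ in (ii): Proposition~\ref{prop:density_anisotropic_Q} alone yields convergence only in $H^{1,1/2}_{;0,}(Q)$, so a dedicated mollification argument compatible with the parabolic operator $\partial/\partial t-\alpha\upDelta$ and with the vanishing-at-$t=0$ condition encoded in the subscript~$;0,$ must be supplied (typically combining a time shift or cut-off with a standard space--time mollifier). Once that is in place, every other step is a routine application of Cauchy--Schwarz, Proposition~\ref{prop:bilinear_form_d}, and Green's identity.
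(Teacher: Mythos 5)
Since the paper cites \cite{Cos1990} without reproducing a proof of this proposition, I assess your proposal on its own merits. Steps (i) and (iii) are routine and correct, and you have correctly located the genuine gap in (ii): a density of smooth functions in the graph norm of $H^{1,1/2}_{;0,}(Q,\partial/\partial t-\alpha\upDelta)$, for which you supply neither a reference nor a complete argument, and which is delicate near a Lipschitz boundary (a Friedrichs mollification must be combined with inward translations and a time cut-off, and one must verify compatibility with the graph norm, not just the $H^{1,1/2}_{;0,}$ norm).

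That gap is, however, entirely avoidable by a small change of test space. Take $w\in C^\infty_c(Q)$, compactly supported in both space and time, instead of $w\in C^\infty_c(\Omega\times[0,T))$. For such $w$ and \emph{arbitrary} $u$ in the graph space the identity $B(u,w)=0$ can be established directly, with no smooth approximation of $u$ at all: since $(\partial/\partial t-\alpha\upDelta)u\in L^2(Q)$ coincides with the distributional heat operator applied to $u\in L^2(Q)$, and $w$ is a test function on $Q$, distributional integration by parts gives
\begin{equation*}
  -\int_0^T\!\!\int_\Omega w\,\Bigl(\frac{\partial}{\partial t}-\alpha\upDelta\Bigr)u\,\dif\vx\,\dif t
  = \int_0^T\!\!\int_\Omega u\,\frac{\partial w}{\partial t}\,\dif\vx\,\dif t
    - \alpha\int_0^T\!\!\int_\Omega \nabla u\cdot\nabla w\,\dif\vx\,\dif t,
\end{equation*}
the spatial step needing only $\nabla u\in\mathbf{L}^2(Q)$ and the compact spatial support of $w$; while Propositions~\ref{prop:bilinear_form_d} and~\ref{prop:density_anisotropic_Q} identify $d(u,w)=-\int_0^T\!\int_\Omega u\,\partial w/\partial t\,\dif\vx\,\dif t$. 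Summing the three terms of $B(u,w)$, with the coefficients read off from Green's first identity as printed before Proposition~\ref{prop:bilinear_form_d} (which fixes a factor $\alpha$ on the gradient term in \eqref{eq:def_ntrace}), yields zero. The only density input then required is that $C^\infty_c(Q)$ is dense in $\{w\in H^{1,1/2}_{;,0}(Q):\dtrace w = 0\}=L^2(0,T;H^1_0(\Omega))\cap H^{1/2}_{,0}(0,T;L^2(\Omega))$, which the tensor-product argument you already invoke for $w$ delivers, cf.\ the treatment of $H^{1,1/2}_{0;}(Q)$ in the appendix. This route is shorter and removes the one step of your plan that you could not complete.
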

For later reference, we introduce the surface curl of a function in $H^{1/2,1/4}(\Sigma)$. We define it using a weak variational definition, inspired by the definitions of the purely spatial tangential trace and surface curl in \cite{SayBroHas2019}, see the definition of $\gamma_T$ and $\nabla_\Gamma^\perp$ in Sections~16.2 and 16.10, respectively.
\begin{definition}
  The \emph{surface curl} $\scurl \varphi \in \mathbf{H}^{-1/2,-1/4}(\Sigma)$ of a function $\varphi \in H^{1/2,1/4}(\Sigma)$ is defined by
  \begin{equation} \label{eq:def_scurl}
    \langle \scurl \varphi, \vpsi \rangle_\Sigma 
      = \langle \nabla \rinvdtrace \varphi, \curl( \rinvdtrace \vpsi) \rangle_{\mathbf{L}^2(Q)} \quad \text{for all } \vpsi \in \mathbf{H}^{1/2,1/4}(\Sigma), 
  \end{equation}
  where $\rinvdtrace$ is the continuous right inverse of $\dtrace$ from Corollary~\ref{cor:right_inverse_dtrace} and its application to a vector valued function is understood componentwise.
\end{definition}
\begin{proposition} \label{prop:surface_curl}
  The operator $\scurl: H^{1/2,1/4}(\Sigma) \rightarrow \mathbf{H}^{-1/2,-1/4}(\Sigma)$ is well-defined and continuous. In particular, \eqref{eq:def_scurl} is independent of the extension~$\rinvdtrace$. If $\widetilde{\varphi} \in C^2(\overline{Q})$ and $\varphi = \widetilde{\varphi}|_{\Sigma}$, then there holds
  \begin{equation} \label{eq:scurl_smooth_functions}
    \scurl \varphi = \nabla \widetilde{\varphi} \times \vn.
  \end{equation}
\end{proposition}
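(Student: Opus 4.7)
My plan is to split the proof into three parts corresponding to the three claims in the proposition: continuity and well-definedness of $\scurl\varphi$ as a functional, independence of the right-hand side of \eqref{eq:def_scurl} from the chosen extension, and the pointwise identification for $\widetilde{\varphi}\in C^2(\overline{Q})$. Throughout I write $u = \rinvdtrace\varphi \in H^{1,1/2}(Q)$ and $\vv=\rinvdtrace\vpsi\in\mathbf{H}^{1,1/2}(Q)$; by the embedding $H^{1,1/2}(Q)\hookrightarrow L^2(0,T;H^1(\Omega))$ both $\nabla u$ and $\curl\vv$ lie in $\mathbf{L}^2(Q)$.

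For the continuity part, I would combine Cauchy--Schwarz on the right-hand side of \eqref{eq:def_scurl} with the boundedness of $\rinvdtrace$ from Corollary~\ref{cor:right_inverse_dtrace} to estimate
\begin{equation*}
  |\langle \nabla u,\curl\vv\rangle_{\mathbf{L}^2(Q)}|
  \le \|\nabla u\|_{\mathbf{L}^2(Q)}\|\curl\vv\|_{\mathbf{L}^2(Q)}
  \le C\,\|\varphi\|_{H^{1/2,1/4}(\Sigma)}\|\vpsi\|_{\mathbf{H}^{1/2,1/4}(\Sigma)},
\end{equation*}
which simultaneously places $\scurl\varphi$ in $\mathbf{H}^{-1/2,-1/4}(\Sigma)$ and makes $\scurl$ continuous.

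The independence statement is the main obstacle, because a direct density approach would demand approximation of functions in $H^{1,1/2}(Q)$ with vanishing lateral trace by smooth functions sharing that property, which is not supplied by Proposition~\ref{prop:density_anisotropic_Q}. I would circumvent this by showing separately that $\langle\nabla u,\curl\vv\rangle_{\mathbf{L}^2(Q)}=0$ whenever $\dtrace u=0$ or $\dtrace\vv=0$, keeping the factor with vanishing trace fixed and approximating only the other factor in $H^{1,1/2}(Q)$, respectively $\mathbf{H}^{1,1/2}(Q)$, by smooth functions via Proposition~\ref{prop:density_anisotropic_Q}. For the smooth approximations I would invoke, for almost every $t\in(0,T)$, one of the classical spatial identities
\begin{equation*}
  \int_\Omega \nabla u\cdot\curl\vv_n\,\dif\vx
  = \int_\Gamma u|_\Gamma\,(\curl\vv_n\cdot\vn)\,\dif\vs_\vx
  \quad\text{or}\quad
  \int_\Omega \nabla u_n\cdot\curl\vv\,\dif\vx
  = \int_\Gamma (\nabla u_n\times\vn)\cdot\vv|_\Gamma\,\dif\vs_\vx,
\end{equation*}
obtained via $\diver\curl\vv_n=0$ and $\diver(\vv\times\nabla u_n)=\nabla u_n\cdot\curl\vv$, respectively. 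The anisotropic zero-trace condition implies vanishing pointwise-in-time spatial trace almost everywhere (through $H^{1,1/2}(Q)\hookrightarrow L^2(0,T;H^1(\Omega))$), so the boundary term vanishes; integrating in $t$ and passing to the $\mathbf{L}^2(Q)$ limit of the approximating sequence closes each case.

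Finally, for $\widetilde{\varphi}\in C^2(\overline{Q})$ the function itself belongs to $H^{1,1/2}(Q)$ with trace $\varphi$, so I may select it as the extension in \eqref{eq:def_scurl} thanks to the independence just shown. Approximating $\vpsi$ by smooth $\vv_n\in\mathbf{C}^\infty_c(\overline{\Omega}\times(0,T))$ and using $\nabla\widetilde{\varphi}\cdot\curl\vv_n=\diver(\vv_n\times\nabla\widetilde{\varphi})$ together with the spatial divergence theorem leads to
\begin{equation*}
  \int_Q \nabla\widetilde{\varphi}\cdot\curl\vv_n\,\dif\vx\,\dif t
  = \int_\Sigma (\nabla\widetilde{\varphi}\times\vn)\cdot\vv_n\,\dif\vs_\vx\,\dif t.
\end{equation*}
Since $\nabla\widetilde{\varphi}\times\vn\in\mathbf{L}^2(\Sigma)$ and $\dtrace\vv_n\to\vpsi$ in $\mathbf{L}^2(\Sigma)$ through $\mathbf{H}^{1/2,1/4}(\Sigma)\hookrightarrow\mathbf{L}^2(\Sigma)$, both sides pass to the limit and yield $\scurl\varphi=\nabla\widetilde{\varphi}\times\vn$ as an identity in $\mathbf{L}^2(\Sigma)\subset\mathbf{H}^{-1/2,-1/4}(\Sigma)$, which is \eqref{eq:scurl_smooth_functions}.
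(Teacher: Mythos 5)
Your proof is correct, and it takes a genuinely different route from the paper in the crucial step, namely the independence of \eqref{eq:def_scurl} from the extension. The paper introduces (in the appendix) the space $H^{1,1/2}_{0;}(Q)$ of anisotropic functions with vanishing lateral trace and relies on the density of $C^\infty_c(Q)$ in $H^{1,1/2}_{0;}(Q)$, a result it states can be shown ``in a similar way'' to Proposition~\ref{prop:density_anisotropic_Q} but does not prove; with that density in hand, the factor with zero trace is replaced by a $C^\infty_c(Q)$ function, so both boundary terms in the integration by parts vanish trivially (the smooth factor already vanishes near $\Sigma$) and no a.e.-in-time trace argument is needed. You instead keep the zero-trace factor fixed, approximate the other factor by $C^\infty_c(\overline{\Omega}\times(0,T))$ functions via Proposition~\ref{prop:density_anisotropic_Q}, and apply the spatial divergence theorem for a.e.\ $t$, killing the boundary term through the fact that $\dtrace w = 0$ forces $w(\cdot,t)|_\Gamma = 0$ in $H^{1/2}(\Gamma)$ for a.e.\ $t$ (justified by comparing the anisotropic trace with the Bochner-pointwise spatial trace, both continuous into $L^2(\Sigma)$ and agreeing on a dense set). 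The trade-off is clean: the paper's argument is shorter once the zero-trace density is granted, whereas yours avoids introducing and proving anything about $H^{1,1/2}_{0;}(Q)$ at the price of the a.e.-in-time trace identification, which you should make slightly more explicit than the parenthetical remark suggests. The continuity estimate and the treatment of the smooth case agree in substance with the paper (the paper integrates by parts directly against the general extension $\rinvdtrace\vpsi$, while you first approximate it by smooth test functions and pass to the limit, but this is a cosmetic difference).
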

\noindent The proof of this proposition is again given in Section~\ref{sec:appendix_proofs} in the appendix.
\subsection{Selected results from distribution theory}
For the proof of the integration by parts formula in Theorem~\ref{thm:general_ibp_formula} we collect a few definitions and results of distribution theory, which can be found in a standard textbook like \cite{Tre1970} to which we refer for the missing details.

By $\mathcal{D}'(A)$ we denote the distributions on an open set $A \subset \mathbb{R}^d$, which are those linear functionals on $C^\infty_c(A)$ which are sequentially continuous with the usual notion of convergence in $C^\infty_c(A)$, see e.g.~\cite[page~65]{McL2000}. In the same manner we define $\mathcal{E}'(A)$ as the set of all linear, sequentially continuous functionals on $C^\infty(A)$. We use the notation $u[w]$ for the application of~$u$ in~$\mathcal{D}'(A)$ or $\mathcal{E}'(A)$ to a function~$w$ in $C^\infty_c(A)$ or $C^\infty(A)$, respectively. 

Let $L^1_{\mathrm{loc}}(A)$ be the set of all measurable functions $u$ on $A$ such that $\|u\|_{L^1(K)}< \infty$ for all compact subsets $K$ of $A$. For each $u \in L^1_{\mathrm{loc}}(A)$ we can define a distribution by setting
\begin{equation*}
  u[w] := \int_A u \, w\, \dif \vx.
\end{equation*}
A distribution that can be represented by a function in $L^1_{\mathrm{loc}}(A)$ in this way is called regular.

For a multi-index $\valpha \in \mathbb{N}_0^d$ the derivative $D^\valpha$ of a distribution $S \in \mathcal{D}'(A)$ is defined by
\begin{equation*}
  D^\valpha S [w] = (-1)^{|\valpha|}S [D^\valpha w]
\end{equation*}  
for all $w \in C^\infty_c(A)$, where $|\valpha|:=\sum_j |\alpha_j|$. The derivative $D^\valpha S$ is itself a distribution. 

The restriction $S|_B$ of a distribution $S \in \mathcal{D}'(A)$ to an open subset $B$ of $A$ is defined by setting $S|_B[w] = S[\widetilde{w}]$ for all $w \in C^\infty_c(B)$, where $\widetilde{w}$ denotes the extension by zero of $w$ to $A$. There holds $S|_B \in \mathcal{D}'(B)$. The support $\supp(S)$ of a distribution $S \in \mathcal{D}'(A)$ is defined as the largest relatively closed subset $F$ of $A$ such that $S|_{A\backslash F} = 0$. In \cite[Theorem~24.2]{Tre1970} it is shown that
\begin{equation*}
  \mathcal{E}'(A) = \{S \in \mathcal{D}'(A) : \supp(S) \text{ is a compact subset of } A\}.
\end{equation*}

In the following we focus on distributions on the whole space $\mathbb{R}^d$. We define the convolution of a distribution $S \in \mathcal{D}'(\mathbb{R}^d)$ and a test function $u \in C^\infty_c(\mathbb{R}^d)$ by
\begin{equation*}
  S \ast u :\ \vx \mapsto S_{\vy}[u(\vx - \cdot_\vy)],  
\end{equation*}
where the index $\vy$ added to $S$ indicates that it acts with respect to this variable. It can be shown that this is a function in $C^\infty(\mathbb{R}^d)$ or even in $C^\infty_c(\mathbb{R}^d)$ if $\supp(S)$ is compact, see e.g.~\cite[Theorem~27.3]{Tre1970}. For $S \in \mathcal{D}'(\mathbb{R}^d)$, $R \in \mathcal{E}'(\mathbb{R}^d)$ and $u \in C^\infty_c(\mathbb{R}^d)$ we can also consider the functions 
\begin{equation*}
  S_\vy[u(\cdot_\vx+\cdot_\vy)] :\ \vx \mapsto S_\vy[u(\vx + \cdot_\vy)]
\end{equation*}
and $R_\vy[u(\cdot_\vx+\cdot_\vy)]$, which are in $C^\infty(\mathbb{R}^d)$ and $C^\infty_c(\mathbb{R}^d)$, respectively. This follows directly from the alternative representation $S_\vy[u(\vx+\cdot_\vy)] = (S \ast \check{u})(-x)$, where $\check{u}(\vx) := u(-\vx)$. In particular, we can define the convolution of $S \in \mathcal{D}'(\mathbb{R}^d)$ and $R \in \mathcal{E}'(\mathbb{R}^d)$ as an element in $\mathcal{D}'(\mathbb{R}^d)$ via 
\begin{equation} \label{eq:def_convolution_distributions}
  (S \ast R)[u] := S_\vx[ R_\vy[ u(\cdot_\vx+ \cdot_\vy)]] \qquad \text{for all } u \in C^\infty_c(\mathbb{R}^d)
\end{equation}
and similarly $(R \ast S) \in \mathcal{D}'(\mathbb{R}^d)$ by
\begin{equation*}
  (R \ast S)[u] := R_\vx[ S_\vy[ u(\cdot_\vx + \cdot_\vy)]] \qquad \text{for all } u \in C^\infty_c(\mathbb{R}^d).
\end{equation*}
Let us collect some properties of the convolution of distributions defined in this way.

\begin{proposition}[\!\!{\cite[Theorem~27.4, Propositions~27.3 and 27.5]{Tre1970}}] \label{prop:properties_convolution_distribution}
  Let $S \in \mathcal{D}'(\mathbb{R}^d)$, $R \in \mathcal{E}'(\mathbb{R}^d)$ and $\valpha \in \mathbb{N}_0^d$. Then
  \begin{align*}
    S \ast R &= R \ast S, \\
    D^\valpha (S \ast R) &= (D^\valpha S) \ast R = S \ast (D^\valpha R), \\
    \delta_\vzero \ast S &= S,
  \end{align*}
  where $\delta_\vzero$ is the delta distribution defined by $\delta_\vzero[w] = w(0)$ for all $w \in C^\infty(\mathbb{R}^d)$.
\end{proposition}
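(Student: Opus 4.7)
My plan is to verify the three identities in the order: derivative formulas, delta identity, commutativity; the last being the only genuinely subtle point. Throughout I would use the observation made just before the statement: for $R\in\mathcal{E}'(\mathbb{R}^d)$ and $u\in C^\infty_c(\mathbb{R}^d)$ the function $\vx\mapsto R_\vy[u(\vx+\cdot_\vy)]$ lies in $C^\infty_c(\mathbb{R}^d)$, while for $S\in\mathcal{D}'(\mathbb{R}^d)$ the analogous function is in $C^\infty(\mathbb{R}^d)$. This is exactly what makes the compositions $(S\ast R)[u]$ and $(R\ast S)[u]$ in \eqref{eq:def_convolution_distributions} well-defined.

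For the derivative identities I would test against $u\in C^\infty_c(\mathbb{R}^d)$ and exploit the two equivalent rewritings $(D^\valpha u)(\vx+\vy)=D^\valpha_\vy u(\vx+\vy)=D^\valpha_\vx u(\vx+\vy)$. Using the first form together with the definition of the distributional derivative of $R$ gives
\begin{equation*}
  R_\vy\bigl[(D^\valpha u)(\vx+\vy)\bigr] = (-1)^{|\valpha|}(D^\valpha R)_\vy\bigl[u(\vx+\vy)\bigr],
\end{equation*}
which, substituted into $(-1)^{|\valpha|}(S\ast R)[D^\valpha u]$, yields $(S\ast D^\valpha R)[u]$ and hence $D^\valpha(S\ast R)=S\ast D^\valpha R$. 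Using the second form, pulling $D^\valpha_\vx$ outside $R_\vy$ (legitimate since the inner function is smooth in $\vx$) and then transferring the derivative onto $S$ by the same definition produces $D^\valpha(S\ast R)=(D^\valpha S)\ast R$. The delta identity then follows immediately from
\begin{equation*}
  (S\ast\delta_\vzero)[u] = S_\vx\bigl[(\delta_\vzero)_\vy[u(\vx+\cdot_\vy)]\bigr] = S_\vx[u(\vx)] = S[u]
\end{equation*}
together with commutativity, which I turn to last.

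The hard part is commutativity, and my preferred route is a mollification argument. Fix a standard mollifier $\rho_\varepsilon\in C^\infty_c(\mathbb{R}^d)$ and set $S_\varepsilon:=S\ast\rho_\varepsilon\in C^\infty(\mathbb{R}^d)$ and $R_\varepsilon:=R\ast\rho_\varepsilon\in C^\infty_c(\mathbb{R}^d)$; both are regular distributions. For smooth functions the convolution is commutative by Fubini, so $S_\varepsilon\ast R_\varepsilon=R_\varepsilon\ast S_\varepsilon$ pointwise, and hence the same equality holds when tested against any $u\in C^\infty_c(\mathbb{R}^d)$. I would then pass to the limit $\varepsilon\to 0$, using that all $R_\varepsilon$ are supported in a fixed compact neighbourhood of $\supp(R)$, that $R_{\varepsilon,\vy}[u(\vx+\cdot_\vy)]$ converges to $R_\vy[u(\vx+\cdot_\vy)]$ in the $C^\infty_c$-topology on that common support, and that $S_\varepsilon\to S$ in $\mathcal{D}'(\mathbb{R}^d)$; the analogous statements with the roles of $S$ and $R$ swapped handle the other iterated evaluation. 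The conceptually cleaner alternative that I would mention as the underlying reason is the Fubini theorem for the tensor product distribution $S\otimes R\in\mathcal{D}'(\mathbb{R}^{2d})$ applied to $u(\vx+\vy)$: the compactness of $\supp(R)$ makes the intersection of $\supp(u(\vx+\vy))$ with $\mathbb{R}^d\times\supp(R)$ compact, which legitimises interchanging the two iterated evaluations of $S\otimes R$ and thereby identifies $(S\ast R)[u]$ with $(R\ast S)[u]$.
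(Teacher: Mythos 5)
The paper states Proposition~\ref{prop:properties_convolution_distribution} with a citation to \cite{Tre1970} and gives no proof, so there is nothing in the paper to compare against; your write-up is a self-contained reconstruction. The derivative and delta identities are handled correctly and follow directly from definition~\eqref{eq:def_convolution_distributions} exactly as you describe. The commutativity argument via mollification is the substantive part and is sound in outline, but the passage to the limit — concluding that $(S_\varepsilon)_\vx\bigl[(R_\varepsilon)_\vy[u(\cdot_\vx+\cdot_\vy)]\bigr]$ converges to $S_\vx\bigl[R_\vy[u(\cdot_\vx+\cdot_\vy)]\bigr]$ from the two separate convergences you list — additionally requires equicontinuity of the convergent family $\{S_\varepsilon\}$, i.e.~a Banach--Steinhaus argument in $\mathcal{D}'(\mathbb{R}^d)$, and likewise for $\{R_\varepsilon\}$ in $\mathcal{E}'(\mathbb{R}^d)$ in the swapped iteration; you name the two convergences but not this diagonal-limit justification, which is the only non-routine analytic step and should be made explicit. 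The tensor-product Fubini argument you mention as ``the underlying reason'' is in fact the route Treves takes; it is cleaner precisely because it avoids this joint-limit issue, the compactness of $\supp(R)$ furnishing the cutoff that makes $S\otimes R$ applicable to $u(\vx+\vy)$. If you were writing this out in full, I would lead with the tensor-product argument and keep the mollification only as a remark.
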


Two distributions playing an important role in this paper are the adjoint operators~$(\dtrace)'$ and $(\ntrace)'$ of the trace operators. If we interpret the Dirichlet trace operator $\dtrace$ as a continuous operator from ${C^\infty(\mathbb{R}^3 \times \mathbb{R})}$ to $H^{1/2,1/4}(\Sigma)$, its adjoint $(\dtrace)': H^{-1/2,-1/4}(\Sigma) \rightarrow \mathcal{E}'(\mathbb{R}^3\times\mathbb{R})$ is given by 
\begin{equation} \label{eq:adjoint_dtrace}
  (\dtrace)'(\varphi)[w] := \langle \varphi, \dtrace w \rangle_\Sigma
\end{equation} 
for $\varphi \in H^{-1/2,-1/4}(\Sigma)$ and $w \in C^\infty(\mathbb{R}^3\times\mathbb{R})$. Similarly, we consider the Neumann trace operator~$\ntrace$ as a continuous operator from ${C^\infty(\mathbb{R}^3 \times \mathbb{R})}$ to $H^{-1/2,-1/4}(\Sigma)$ by setting ${\ntrace u = \vn \cdot \nabla u}$ for $u \in C^\infty(\mathbb{R}^3\times\mathbb{R})$, and end up with its adjoint $(\ntrace)':H^{1/2,1/4}(\Sigma) \rightarrow \mathcal{E}'(\mathbb{R}^3\times\mathbb{R})$ defined by
\begin{equation} \label{eq:adjoint_ntrace}
  (\ntrace)'(\psi)[w] := \langle \ntrace w, \psi \rangle_\Sigma
\end{equation}
for $\psi \in H^{1/2,1/4}(\Sigma)$ and $w \in C^\infty(\mathbb{R}^3\times\mathbb{R})$. 

\section{The transient heat equation in space-time -- selected results} \label{sec:heat_equation}
In Section \ref{sec:spaces_traces_curl} we have introduced the initial-boundary value problem \eqref{eq:heat_equation}--\eqref{eq:boundary_condition} for the transient heat equation. Here we want to give some more details about its solution with a focus on results which are relevant for our later considerations. The first theorem which we state is a classical existence and uniqueness result, which can be found in a more general form in \cite[Theorem~6.2.8]{Are2011}. The second result known as the parabolic maximum principle is also standard, see e.g.~\cite[Section~2.3.3, Theorem 4]{Eva1998}.
\begin{theorem} \label{thm:classical_solvability_heat_equation}
  Let $g \in C(\overline{\Sigma})$ such that $g(\vx,0) = 0$ for all $\vx \in \Gamma$. Then the Dirichlet initial boundary value problem \eqref{eq:heat_equation}--\eqref{eq:dirichlet_boundary_values} admits a unique classical solution $u \in C(\overline{Q}) \cap C^\infty(Q)$. 
\end{theorem}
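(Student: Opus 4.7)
The plan is to treat uniqueness and existence separately. Uniqueness is immediate from the parabolic maximum principle stated in the excerpt: if $u_1, u_2 \in C(\overline{Q}) \cap C^\infty(Q)$ are two classical solutions, their difference $w = u_1 - u_2$ satisfies the homogeneous heat equation in $Q$ with $w \equiv 0$ on $\Sigma$ and $w(\cdot,0) \equiv 0$ on $\Omega$, so applying the maximum principle to $w$ and to $-w$ on $\overline{Q}$ forces $w \equiv 0$.

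For existence, the route I would take is Perron's method adapted to the parabolic setting. I would define the admissible family $\mathcal{F}$ of all $v \in C(\overline{Q})$ that are parabolic subsolutions (i.e.\ satisfy the sub-mean-value inequality on every heat ball compactly contained in $Q$) and that are bounded above by $g$ on $\Sigma$ and by $0$ on $\overline{\Omega}\times\{0\}$. The candidate solution is the pointwise supremum $u(\vx,t) := \sup_{v\in\mathcal{F}} v(\vx,t)$. Interior regularity is then routine: using the standard caloric lifting (replace a subsolution locally on a heat ball by the solution of the heat equation with matching boundary data) one shows that $u$ itself satisfies both the sub- and super-mean-value property on every heat ball in $Q$, hence is caloric, hence in $C^\infty(Q)$ with $\frac{\partial}{\partial t}u - \alpha\upDelta u = 0$ in $Q$.

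The main obstacle is boundary regularity, i.e.\ showing that $u$ extends continuously to $\overline{Q}$ with $u|_\Sigma = g$ and $u(\cdot,0) \equiv 0$ on $\Omega$. This is carried out by constructing a barrier at every point of the parabolic boundary $\Sigma \cup (\overline{\Omega}\times\{0\})$. At a lateral point $(\vx_0, t_0) \in \Sigma$ with $t_0 > 0$ the Lipschitz hypothesis on $\Omega$ yields an exterior cone at $\vx_0$, and the time-independent spatial barrier built from this cone, familiar from the elliptic Perron theory, is easily verified to be a parabolic barrier. At a point $(\vx_0, 0)$ with $\vx_0 \in \Omega$ a simple time barrier (a small multiple of $t$) is enough, since the prescribed initial value is $0$. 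The genuinely delicate set is the corner $\Gamma \times \{0\}$, and this is the step I expect to be technically heaviest: one has to combine the exterior-cone spatial barrier with a time barrier so as to dominate both the lateral and the initial data near the corner. The compatibility assumption $g(\vx,0) = 0$ for $\vx \in \Gamma$ is exactly what makes such a combined barrier possible, since without it the prescribed boundary value would jump across the corner and no continuous solution could exist. Once continuity up to $\overline{Q}$ is established, $u \in C(\overline{Q}) \cap C^\infty(Q)$ is the desired classical solution, and uniqueness (already shown) closes the argument.
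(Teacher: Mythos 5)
The paper does not actually prove Theorem~\ref{thm:classical_solvability_heat_equation}; it simply invokes \cite[Theorem~6.2.8]{Are2011}, so there is no in-paper argument to compare yours against. Your proposal is a correct, self-contained alternative. The uniqueness half is complete as stated: the difference of two classical solutions satisfies the homogeneous heat equation with zero data on $\Sigma$ and on $\overline{\Omega}\times\{0\}$, and applying Proposition~\ref{prop:maximum_principle} to the difference and its negative forces it to vanish. For existence, your Perron outline is sound, and you correctly locate the crux at the corner $\Gamma\times\{0\}$. For a Lipschitz domain there is an exterior cone at each $\vx_0\in\Gamma$, hence a superharmonic spatial barrier $w$ with $w(\vx_0)=0$, $w>0$ nearby, and a uniform positive lower bound off any small ball; given $\varepsilon>0$, choose $\delta$ from the continuity of $g$ near $(\vx_0,0)$ (this uses $g(\vx_0,0)=0$) and constants $C_1,C_2$ so that $\varepsilon+C_1 w(\vx)+C_2 t$ is superparabolic, dominates $|g|$ on $\Sigma$ and $0$ on $\Omega\times\{0\}$, and tends to $\varepsilon$ at the corner; sending $\varepsilon\to 0$ gives continuity there. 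One refinement worth adding at lateral points $(\vx_0,t_0)$ with $t_0>0$: the time-independent spatial barrier $w(\vx)$ vanishes on the whole segment $\{\vx_0\}\times[0,T]$, not only at $(\vx_0,t_0)$, so it is not a barrier in the strict pointwise sense; either invoke causality of the heat operator (only data for $t\le t_0$ can influence values near $(\vx_0,t_0)$) or add a term vanishing only at $t=t_0$ to restore strict positivity away from the target point. With that detail filled in, the Perron construction is an elementary proof that makes the roles of the Lipschitz geometry and the compatibility condition $g(\cdot,0)=0$ explicit, whereas the paper's route simply offloads everything to the cited reference.
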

\begin{proposition} \label{prop:maximum_principle}
  Let $u \in C(\overline{Q}) \cap C^2(Q)$ satisfy \eqref{eq:heat_equation}. Then
  \begin{align*}
    \max_{(\vx,t) \in \overline{Q}} u(\vx,t) = \max_{(\vx,t) \in (\overline{\Sigma} \cup (\overline{\Omega} \times \{0\}))} u(\vx,t) 
  \end{align*}
  and the same holds if the maximum is replaced by the minimum on both sides.
\end{proposition}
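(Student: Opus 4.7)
The plan is to follow the classical perturbation argument. By replacing $u$ with $-u$, which again satisfies \eqref{eq:heat_equation}, the minimum statement reduces to the maximum statement, so I will only argue the latter. Denote the parabolic boundary by $\partial_p Q := \overline{\Sigma} \cup (\overline{\Omega} \times \{0\})$, and write $M := \max_{(\vx,t) \in \partial_p Q} u(\vx,t)$, which is well defined and finite since $\partial_p Q$ is a compact subset of $\overline{Q}$ and $u$ is continuous.

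For $\varepsilon > 0$, introduce the auxiliary function $v_\varepsilon(\vx,t) := u(\vx,t) - \varepsilon t$, which satisfies $v_\varepsilon \in C(\overline{Q}) \cap C^2(Q)$ and the strict inequality
\begin{equation*}
  \frac{\partial v_\varepsilon}{\partial t}(\vx,t) - \alpha \upDelta v_\varepsilon(\vx,t) = -\varepsilon < 0 \qquad \text{for all } (\vx,t) \in Q.
\end{equation*}
Since $v_\varepsilon$ is continuous on the compact set $\overline{Q}$, it attains its maximum at some point $(\vx_0,t_0) \in \overline{Q}$. I will show that $(\vx_0,t_0) \in \partial_p Q$. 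Suppose instead that $(\vx_0,t_0) \in Q \cup (\Omega \times \{T\})$; in both cases $\vx_0 \in \Omega$ is an interior point of $\Omega$, and $t_0 \in (0,T]$. Standard second derivative tests for extrema then give $\upDelta v_\varepsilon(\vx_0,t_0) \le 0$ and
\begin{equation*}
  \frac{\partial v_\varepsilon}{\partial t}(\vx_0,t_0) \ge 0,
\end{equation*}
where the inequality in time is an equality if $t_0 < T$ and a one-sided derivative estimate if $t_0 = T$. Combining these yields $\partial_t v_\varepsilon(\vx_0,t_0) - \alpha\upDelta v_\varepsilon(\vx_0,t_0) \ge 0$, contradicting the strict inequality above. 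Hence $(\vx_0,t_0) \in \partial_p Q$.

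Consequently, for every $(\vx,t) \in \overline{Q}$,
\begin{equation*}
  u(\vx,t) = v_\varepsilon(\vx,t) + \varepsilon t
    \le \max_{(\vy,s) \in \partial_p Q} v_\varepsilon(\vy,s) + \varepsilon T
    \le M + \varepsilon T.
\end{equation*}
Letting $\varepsilon \to 0^+$ gives $\max_{\overline{Q}} u \le M$, and the reverse inequality is trivial since $\partial_p Q \subset \overline{Q}$. This proves the maximum statement, and the minimum statement follows by applying the result to $-u$.

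The only delicate point is the treatment of the top face $\overline{\Omega} \times \{T\}$: one has to allow the auxiliary maximum to lie there and exploit only the one-sided time derivative estimate $\partial_t v_\varepsilon \ge 0$, which is precisely what the strict sign of $-\varepsilon$ in the heat operator was designed to combat. I do not anticipate any other real obstacle, since the spatial interior argument relies only on the standard necessary conditions for an interior maximum of a $C^2$ function.
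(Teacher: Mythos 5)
Your proof follows the standard perturbation strategy, and the paper itself just cites Evans for this result without including a proof, so there is no in-paper argument to compare against. There is, however, a genuine gap in your treatment of the top face $\overline{\Omega} \times \{T\}$. The hypothesis is $u \in C(\overline{Q}) \cap C^2(Q)$, and here $Q = \Omega \times (0,T)$ is \emph{open}, so the top face $\Omega \times \{T\}$ is not contained in $Q$. Consequently $v_\varepsilon = u - \varepsilon t$ is only known to be \emph{continuous} at a point $(\vx_0,T)$ with $\vx_0 \in \Omega$: neither $\upDelta v_\varepsilon(\vx_0,T)$ nor $\partial_t v_\varepsilon(\vx_0,T)$ is guaranteed to exist, one-sided or otherwise. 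Your argument invokes both as if they were well defined and satisfied the usual sign conditions, which is not justified. The true delicacy at $t=T$ is not merely that the time-derivative estimate is one-sided, as you suggest, but that under the stated regularity no derivatives at all are available there. (The Evans formulation you are implicitly following assumes $u \in C^2_1(U \times (0,T])$, i.e.\ regularity up to the top face, so the issue does not arise in that setting.)

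The fix is the standard truncation. Fix $\delta \in (0,T)$ and run your argument on $Q_\delta := \Omega \times (0,T-\delta)$, whose closure is $\overline{\Omega} \times [0,T-\delta]$. Any maximizer $(\vx_0,t_0)$ of $v_\varepsilon$ over $\overline{Q_\delta}$ that does not lie on the parabolic boundary of $Q_\delta$ satisfies $(\vx_0,t_0) \in \Omega \times (0,T-\delta] \subset Q$, so $v_\varepsilon$ is $C^2$ in a full space-time neighbourhood of $(\vx_0,t_0)$ and both $\upDelta v_\varepsilon(\vx_0,t_0) \le 0$ and the one-sided estimate $\partial_t v_\varepsilon(\vx_0,t_0) \ge 0$ are legitimate, yielding the contradiction. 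This gives
\begin{equation*}
  \max_{\overline{\Omega}\times[0,T-\delta]} v_\varepsilon
  \;\le\; \max_{(\Gamma \times [0,T-\delta]) \cup (\overline{\Omega}\times\{0\})} v_\varepsilon
  \;\le\; M,
\end{equation*}
and letting $\delta \to 0^+$ and using the continuity of $v_\varepsilon$ on the compact set $\overline{Q}$ gives $\max_{\overline{Q}} v_\varepsilon \le M$; your final step $\varepsilon \to 0^+$ then completes the argument. Everything else in your proposal — the reduction of the minimum statement via $-u$, the perturbation $-\varepsilon t$ producing a strict subsolution, the interior second-derivative test, and the closing limits — is correct and matches the intended classical argument.
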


Switching to the setting of the anisotropic Sobolev spaces introduced in Section~\ref{sec:spaces_traces_curl} allows us to consider more general Cauchy data and retain the unique solvability. A proof of the following result can be found for example in \cite{Cos1990}, see Theorem~2.9 for the Dirichlet problem and Corollary~3.17 for the Neumann problem.
\begin{theorem} \label{thm:unique_solvability_heat_equation}
  Let $g \in H^{1/2,1/4}(\Sigma)$ or $h \in H^{-1/2,-1/4}(\Sigma)$. Then the initial boundary value problem \eqref{eq:heat_equation}--\eqref{eq:initial_condition} with Dirichlet boundary condition \eqref{eq:dirichlet_boundary_values} or Neumann boundary condition~\eqref{eq:neumann_boundary_values} admits a unique solution $u \in H^{1,1/2}_{;0,}(Q)$.
\end{theorem}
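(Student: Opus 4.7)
The plan is to recast both problems as variational problems on the anisotropic Sobolev spaces from Section~\ref{sec:spaces_traces_curl} and to invoke the Banach–Nečas–Babuška theorem. The central object is the parabolic bilinear form
\begin{equation*}
  b(u,v) := d(u,v) + \alpha \int_0^T\int_\Omega \nabla u \cdot \nabla v \,\dif\vx\,\dif t,
\end{equation*}
which by Proposition~\ref{prop:bilinear_form_d} together with the continuity of the spatial Dirichlet form is bounded on $H^{1,1/2}_{;0,}(Q) \times H^{1,1/2}_{;,0}(Q)$.

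For the Neumann problem I would work directly: combining Green's first identity on smooth functions with the definition~\eqref{eq:def_ntrace} of $\ntrace$, a function $u \in H^{1,1/2}_{;0,}(Q)$ solves \eqref{eq:heat_equation}--\eqref{eq:initial_condition} with $\ntrace u = h$ if and only if
\begin{equation*}
  b(u,v) = \langle h, \dtrace v \rangle_\Sigma \qquad \text{for all } v \in H^{1,1/2}_{;,0}(Q).
\end{equation*}
The right-hand side is a bounded linear functional on $H^{1,1/2}_{;,0}(Q)$ thanks to the continuity of $\dtrace$ there. For the Dirichlet problem I would use the surjectivity of $\dtrace$ in Theorem~\ref{thm:dtrace} to pick a lifting $U_g \in H^{1,1/2}_{;0,}(Q)$ with $\dtrace U_g = g$ and make the ansatz $u = U_g + w$ with $w$ in the closed subspace
\begin{equation*}
  \widetilde{H}_0(Q) := \bigl\{ v \in H^{1,1/2}_{;0,}(Q) : \dtrace v = 0 \bigr\}.
\end{equation*}
The problem then reduces to finding $w \in \widetilde{H}_0(Q)$ with $b(w,v) = -b(U_g,v)$ for all $v$ in the analogous subspace of $H^{1,1/2}_{;,0}(Q)$; recovery of the PDE in $Q$ is obtained by testing against $C^\infty_c(Q)$, while $\dtrace u = g$ holds by construction and the vanishing at $t=0$ is encoded in the choice of space.

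The technical core, and the main obstacle, is an inf-sup stability estimate for $b$ between the asymmetric spaces $H^{1,1/2}_{;0,}(Q)$ and $H^{1,1/2}_{;,0}(Q)$ (restricted to the zero-trace subspaces in the Dirichlet case). Following the strategy going back to Lions–Magenes, I would construct for each trial function $u$ a test function of the form $v = u + \map{H}_T u$, where $\map{H}_T$ is a temporal operator (a modified Hilbert transform or half-order antiderivative in $t$) that maps functions vanishing at $t=0$ to functions vanishing at $t=T$ and, when inserted into $d(u,\cdot)$, produces the missing control of the $H^{1/2}(0,T;L^2(\Omega))$-seminorm of $u$. Checking that
\begin{equation*}
  b(u, u + \map{H}_T u) \gtrsim \|u\|_{H^{1,1/2}(Q)}^2
\end{equation*}
is the delicate step, because $d(\cdot,\cdot)$ only supplies half a time derivative and the cross term with $\nabla(\map{H}_T u)$ has to be controlled by a Cauchy–Schwarz/Young argument without losing the spatial Dirichlet seminorm. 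Once this inf-sup condition is established in each of the two variational settings, both the existence and the uniqueness statements of Theorem~\ref{thm:unique_solvability_heat_equation} follow from the Banach–Nečas–Babuška theorem.
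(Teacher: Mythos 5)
The paper itself does not prove Theorem~\ref{thm:unique_solvability_heat_equation} but simply cites Costabel's results (Theorem~2.9 and Corollary~3.17 in~\cite{Cos1990}), which are obtained by a Fourier--Laplace transform in time reducing the problem to a family of parameter-dependent elliptic problems. Your proposal is therefore a genuinely different route: a direct space-time variational formulation between the asymmetric trial and test spaces $H^{1,1/2}_{;0,}(Q)$ and $H^{1,1/2}_{;,0}(Q)$, with well-posedness from an inf-sup argument built on a modified Hilbert transform in time. This is in the spirit of Zank's thesis~\cite{Zan2019}, which the paper cites for an unrelated density result, and it would in principle give a constructive inf-sup constant, whereas the Fourier--Laplace route goes through spectral estimates. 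The variational setup itself (Green's identity giving the Neumann formulation, trace lifting for the Dirichlet problem, boundedness of $d$ via Proposition~\ref{prop:bilinear_form_d}) is sound, up to a stray factor of $\alpha$ in the Neumann right-hand side, which should read $\alpha\,\langle h, \dtrace v\rangle_\Sigma$, and the unfortunate reuse of the letter $b$, which collides with the bilinear form introduced later in \eqref{eq:def_b}.

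However, the step you identify as delicate does not, as sketched, make sense: the proposed test function $v = u + \map{H}_T u$ is not an element of the test space. The trial function $u$ lies in $H^{1,1/2}_{;0,}(Q)$ (vanishing at $t=0$), whereas test functions must lie in $H^{1,1/2}_{;,0}(Q)$ (vanishing at $t=T$). The operator $\map{H}_T$ maps the former to the latter, so $\map{H}_T u$ is an admissible test function, but $u$ itself is not, and the sum $u + \map{H}_T u$ is in neither space. Moreover, even the admissible choice $v = \map{H}_T u$ alone does not close the argument in the $H^{1,1/2}$ setting: by construction $d(u,\map{H}_T u)$ produces the $H^{1/2}_{0,}(0,T;L^2(\Omega))$-seminorm of $u$, and the gradient cross-term $\langle \nabla u, \map{H}_T \nabla u\rangle_{\mathbf L^2(Q)}$ is non-negative because $\map{H}_T$ is a non-negative operator on $L^2(0,T)$, but it does not control $\|\nabla u\|_{\mathbf L^2(Q)}^2$. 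So one obtains a lower bound only in a weaker-than-graph norm, and an additional mechanism (for example, an auxiliary elliptic lifting $z$ with $-\Delta z$ tied to the time derivative of $u$, or a separate sup over a second family of test functions, as is done in the inf-sup proofs for parabolic problems) is needed to recover the missing spatial $H^1$ control. Until that ingredient is supplied and the test-space compatibility is repaired, the inf-sup estimate, and with it the Banach--Ne\v{c}as--Babu\v{s}ka conclusion, remains unproven.
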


A possible strategy to determine the solution of the initial boundary value problem of the heat equation is to consider related boundary integral equations. The solution $u$ in Theorem~\ref{thm:unique_solvability_heat_equation} satisfies the representation formula \cite[cf.~Theorem~2.20]{Cos1990}
\begin{equation} \label{eq:representation_formula}
  u = - W \dtrace{u} + \widetilde V (\alpha \ntrace{u}),
\end{equation}
with the single layer potential $\widetilde{V}: H^{-1/2,-1/4}(\Sigma) \rightarrow H^{1,1/2}_{;0,}(Q)$ defined by
\begin{equation} \label{eq:def_slp}
  \widetilde{V} q = G_\alpha \ast \left( (\dtrace)'q \right)
\end{equation}
and the double layer potential $W: H^{1/2,1/4}(\Sigma) \rightarrow H^{1,1/2}_{;0,}(Q)$ given by
\begin{equation} \label{eq:def_dlp}
  W \varphi = G_\alpha \ast \left( (\alpha \ntrace)'\varphi \right).
\end{equation}
Here, $(\dtrace)'$ and $(\ntrace)'$ are the adjoint trace operators defined in \eqref{eq:adjoint_dtrace} and \eqref{eq:adjoint_ntrace}, respectively, $G_\alpha$ is the fundamental solution of the heat equation
\begin{equation} \label{eq:def_fundamental_solution}
  G_\alpha(\vx,t) :=
  \begin{cases}
  \displaystyle
  \frac{1}{(4\pi\alpha t)^{3/2}}\exp\bigg( -\frac{|\vx|^2}{4\alpha t} \bigg) & \text{for } t > 0, \\
  0 & \text{otherwise},
  \end{cases}
\end{equation}
and the convolution is understood as convolution of distributions defined in \eqref{eq:def_convolution_distributions}, where we identify $G_\alpha$ with the regular distribution induced by it. For~$(x,t) \in (\mathbb{R}^3 \times (0,T)) \backslash \Sigma$ we have the usual representations of the single and double layer potentials as
\begin{align*}
  \widetilde{V} q ( \vx, t ) &= \int_{0}^t \int_{\Gamma} 
    G_\alpha(\vx-\vy, t - \tau) q(\vy, \tau) \, \dif\vs_\vy\, \dif\tau, \\
  W \varphi ( \vx, t ) &= \int_{0}^t \int_{\Gamma} 
    \alpha \frac{\partial G_\alpha}{\partial \vn_\vy}(\vx-\vy, t - \tau) \varphi(\vy, \tau)\, \dif\vs_\vy\, \dif\tau.
\end{align*}
The latter integral is well-defined for all $\varphi \in H^{1/2,1/4}(\Sigma)$, while for general $q \in H^{-1/2,-1/4}(\Sigma)$ the integral representation of $\widetilde{V}q$ has to be understood in the sense of the duality pairing $\langle \cdot, \cdot \rangle_\Sigma$.

By applying the Dirichlet and Neumann trace operators to the potentials $\widetilde{V}$ and $W$ we get the usual boundary integral operators, i.e.~the single layer operator $V$, double layer operator $K$, adjoint time reversed double layer operator $K_T'$ and hypersingular operator $D$ defined by
\begin{alignat*}{2}
  V&: H^{-1/2,-1/4}(\Sigma) \rightarrow H^{1/2,1/4}(\Sigma), \qquad 
    && V q := \dtrace \widetilde{V}q, \\
  K&: H^{1/2,1/4}(\Sigma) \rightarrow H^{1/2,1/4}(\Sigma), \qquad 
    && K \varphi := \frac{1}{2}\left( \dtrace W \varphi + \dtraceext W \varphi \right),  \\
  K_T'&: H^{-1/2,-1/4}(\Sigma) \rightarrow H^{-1/2,-1/4}(\Sigma), \qquad 
    && K_T' q := \frac{1}{2}\left( \alpha \ntrace \widetilde{V} q + \alpha \ntraceext \widetilde{V} q  \right), \\
  D&: H^{1/2,1/4}(\Sigma) \rightarrow H^{-1/2,-1/4}(\Sigma), \qquad 
    && D \varphi := \alpha \ntrace W \varphi, \\
\end{alignat*}
see \cite[Defintion~3.5]{Cos1990}. In that paper the interested reader can also find a proper definition of the exterior traces $\dtraceext$ and $\ntraceext$, which is skipped here because these operators are not considered anymore in the following. Regarding the notation and naming of~$K_T'$ we want to point out that $K_T'$ is not the adjoint of $K$, but the adjoint of the time reversed double layer operator $K_T := \Theta_T \circ K$, with $\Theta_T f (\vx,t):=f(\vx,T-t)$.

The boundary integral operators can be used in the usual way to obtain solutions of the initial boundary value problems \eqref{eq:heat_equation}--\eqref{eq:boundary_condition}, see e.g.~\cite{Cos1990,DohNiiSte2019}. Our focus in this paper lies on the hypersingular operator~$D$ and in particular its application. Before dealing with that, we conclude this section with Theorem~\ref{thm:extended_maximum_principle}, which is a generalization of the maximum principle in Proposition~\ref{prop:maximum_principle} and will be required in the proof of Theorem~\ref{thm:second_bilinear_form_tensor_case}. For Theorem~\ref{thm:extended_maximum_principle} in turn, we have to state yet another solvability result, namely Theorem~\ref{thm:heat_solvability_l2}.
\begin{remark}
  In the following theorem non-tangential limits and the non-tangential maximal function $N(u)$ of a function~$u$ in~$Q$ will appear. We will use that
  \begin{equation} \label{eq:definition_and_estimate_nontangential_max}
    N(u)(\vx,t) := \sup \{ |u(\vy,\tau)| : (\vy,\tau) \in \gamma(\vx,t)\} \leq \sup\{ |u(\vy,\tau)|: (\vy,\tau) \in Q \}
  \end{equation}
  for all~$(\vx,t) \in \Sigma$, where $\gamma(\vx,t) \subset Q$ is the so-called parabolic non-tangential approach region of~$(\vx,t) \in \Sigma$. The non-tangential limit of a function $u$ in $Q$ at $(\vx,t) \in \Sigma$ is also defined with respect to this region as 
  \begin{equation*}
    \lim_{\gamma(\vx,t)\ni(\vy,\tau) \rightarrow (\vx,t)} u(\vy,\tau).
  \end{equation*}
  For the later discussion we do not need the concrete definition of $\gamma(\vx,t)$, so we refer to \cite[Section~1]{Bro1989} for it.
\end{remark}
\begin{theorem}[\!\!{\cite[Theorems~8.1~and~8.3]{Bro1989}}] \label{thm:heat_solvability_l2}
  The operator $(-1/2\, I + K)$ is an isometry from~$L^2(\Sigma)$ to $L^2(\Sigma)$. In particular, for any $g \in L^2(\Sigma)$ the function 
  \begin{equation} \label{eq:indirect_dl_approach}
    u = W \bigg( \left(-\frac{1}{2}I + K\right)^{-1} g \bigg)
  \end{equation}
  is well-defined. Furthermore, it is the unique function satisfying the following properties:
  \begin{enumerate}[(i)]
    \item $u \in C^\infty(Q)$ and $(\partial/\partial t - \alpha \upDelta)u = 0$,
    \item $u \in C(\Omega \times [0,T))$ and $u(\cdot,0) = 0$,
    \item the non-tangential maximal function $N(u)$ is in $L^2(\Sigma)$,
    \item $u = g$ on~$\Sigma$ in the sense of non-tangential limits almost everywhere.
  \end{enumerate} 
\end{theorem}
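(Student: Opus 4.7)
The plan is to follow the strategy developed by Brown in \cite{Bro1989}, which adapts Verchota's approach for the Laplace equation on Lipschitz domains to the parabolic setting of the heat equation on a Lipschitz cylinder. The argument rests on three pillars: (a) $L^2$-boundedness of the double layer potential and its non-tangential maximal function, (b) a jump relation identifying $-\tfrac12 I + K$ as the non-tangential interior boundary operator of $W$, and (c) invertibility of $-\tfrac12 I + K$ on $L^2(\Sigma)$.

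For step (a), the kernel $\alpha\,\partial G_\alpha / \partial \vn_\vy$ is a standard Calderón--Zygmund kernel with respect to the parabolic metric on $\mathbb{R}^3 \times \mathbb{R}$, and the $L^2$-boundedness of $K$ together with the maximal estimate $\|N(W\varphi)\|_{L^2(\Sigma)} \le C\,\|\varphi\|_{L^2(\Sigma)}$ follows from the parabolic analogue of the Coifman--McIntosh--Meyer theorem on the $L^2$-boundedness of Cauchy-type integrals on Lipschitz graphs. Step (b), the jump relation
\begin{equation*}
  \lim_{\gamma(\vx,t)\ni(\vy,\tau)\to(\vx,t)} W\varphi(\vy,\tau) = \bigl(-\tfrac12 I + K\bigr)\varphi(\vx,t) \qquad \text{a.e.~on } \Sigma,
\end{equation*}
is then obtained by the usual approximate-identity argument on the smooth part of the kernel together with a principal-value interpretation of $K$ on $\Sigma$.

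The main obstacle is step (c), which requires genuinely new input beyond elliptic potential theory. The core tool is a Rellich-type identity for caloric functions, obtained by multiplying the heat equation by a smooth transversal vector-field multiplier adapted to $\Sigma$ and integrating over $Q$. This produces a two-sided comparison between the $L^2(\Sigma)$-norms of the non-tangential limits of the tangential and normal derivatives of $u = W\varphi$, which, combined with the jump relation, yields a coercivity-type lower bound
\begin{equation*}
  \|\varphi\|_{L^2(\Sigma)} \le C\,\bigl\|\bigl(-\tfrac12 I + K\bigr)\varphi\bigr\|_{L^2(\Sigma)},
\end{equation*}
so that $-\tfrac12 I + K$ is injective with closed range. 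The analogous estimate for the adjoint operator (involving $K_T'$), together with a continuity-in-the-coefficients deformation from the Lipschitz boundary $\Sigma$ to a smooth reference surface for which the invertibility of $-\tfrac12 I + K$ is classical, then yields surjectivity; the ``isometry'' claim is to be read as a bounded invertibility with the indicated two-sided norm equivalence.

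Given invertibility, set $\varphi := (-\tfrac12 I + K)^{-1} g$ and $u := W\varphi$. Property (i) is immediate from the smoothness of $G_\alpha$ away from $(\vzero, 0)$; property (ii) follows from the causal support of $G_\alpha$, which restricts the convolution in \eqref{eq:def_dlp} to $\tau < t$, so $u \equiv 0$ in a neighborhood of $\Omega \times \{0\}$; property (iii) is step (a); and property (iv) is the jump relation of step (b). For uniqueness, the difference of two solutions is caloric with $N(u) \in L^2(\Sigma)$, vanishing initial data and vanishing non-tangential boundary values a.e.~on $\Sigma$; a parabolic $L^2$-Fatou-type argument allows one to represent this difference as $W\psi$ for some $\psi \in L^2(\Sigma)$, the jump relation then forces $(-\tfrac12 I + K)\psi = 0$, and the injectivity from step (c) yields $\psi \equiv 0$ and hence $u \equiv 0$.
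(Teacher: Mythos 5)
The paper does not prove this statement; it is cited verbatim from Brown \cite{Bro1989} (Theorems~8.1 and~8.3), so there is no in-paper argument to compare against. Your sketch is nevertheless a faithful high-level reconstruction of Brown's route: parabolic Calder\'on--Zygmund bounds for $K$ and for the non-tangential maximal function $N(W\varphi)$, the jump relation identifying $-\tfrac12 I + K$ as the interior non-tangential boundary operator of $W$, and a Rellich-plus-deformation argument for bounded invertibility, with the time-reversed adjoint $K_T'$ playing the role $K^*$ plays in elliptic theory. You also correctly read the paper's word ``isometry'' as ``isomorphism''; the paper itself uses the theorem in that weaker sense in the proof of Theorem~\ref{thm:extended_maximum_principle}.

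Two corrections, though. For property~(ii) you claim the causal support of $G_\alpha$ forces $u \equiv 0$ in a neighborhood of $\Omega \times \{0\}$; that is too strong. For any $t>0$ the $\tau$-integration in $W\varphi$ runs over the nonempty interval $(0,t)$, so $u$ need not vanish on any strip $\Omega\times(0,\varepsilon)$. What is true, and what~(ii) requires, is that $u$ extends continuously to $\Omega\times\{0\}$ with $u(\cdot,0)=0$: for $\vx$ in a compact subset of $\Omega$ the kernel is uniformly bounded, so $|u(\vx,t)|$ is bounded by a constant times $\int_0^t\int_\Gamma|\varphi|$, which tends to $0$ as $t\downarrow 0$. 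Second, the uniqueness step as written risks circularity: it presupposes that the difference of two candidate solutions is representable as $W\psi$ with $\psi\in L^2(\Sigma)$, but this representability is precisely part of what the solvability theorem delivers and cannot be assumed from conditions~(i)--(iv) alone. Brown's uniqueness proof instead exhausts $Q$ by smooth interior cylinders on which classical uniqueness applies, and passes to the limit using the $L^2$ control supplied by $N(u)$ --- essentially the approximating-domain machinery the present paper records in Theorem~\ref{thm:approximating_domains}.
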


\begin{theorem}[Extended parabolic maximum principle] \label{thm:extended_maximum_principle}
  Let $g \in L^\infty(\Sigma)$ and $u$ be the solution to the initial Dirichlet boundary value problem \eqref{eq:heat_equation}--\eqref{eq:dirichlet_boundary_values} given by~\eqref{eq:indirect_dl_approach}. Then there holds 
  \begin{equation} \label{eq:max_at_boundary_l_infty}
    \sup\{ |u(\vx,t)| : (\vx,t) \in Q \} \leq \|g\|_{L^\infty(\Sigma)}.
  \end{equation}
\end{theorem}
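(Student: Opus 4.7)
The plan is a density argument: approximate the rough datum $g$ by smoother data for which the classical maximum principle (Proposition~\ref{prop:maximum_principle}) applies, derive the bound for the approximate solutions, and then pass to the limit in the representation~\eqref{eq:indirect_dl_approach}. Concretely, I would choose a sequence $g_n \in C(\overline{\Sigma})$ with $g_n(\vx,0)=0$, $\|g_n\|_{L^\infty(\Sigma)} \le \|g\|_{L^\infty(\Sigma)}$, and $g_n \to g$ in $L^2(\Sigma)$. Such a sequence is produced, e.g., by mollifying $g$ (extended by zero outside $\Sigma$) in space and time and then multiplying by a smooth cut-off $\chi_n(t)$ that vanishes on $[0,1/n]$ and equals one on $[2/n,T]$; both the $L^1$-normalised mollifier and the cut-off $0 \le \chi_n \le 1$ respect the $L^\infty$-bound, and $L^2$-convergence is routine.

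By Theorem~\ref{thm:classical_solvability_heat_equation}, each such $g_n$ admits a classical solution $u_n \in C(\overline{Q}) \cap C^\infty(Q)$, and Proposition~\ref{prop:maximum_principle} applied to $\pm u_n$ (the parabolic extremum of which is attained on $\overline{\Sigma}$ because $u_n(\cdot,0)=0$) yields the uniform estimate $\sup_{\overline{Q}} |u_n| \le \|g_n\|_{L^\infty(\Sigma)} \le \|g\|_{L^\infty(\Sigma)}$. To link $u_n$ with the double-layer representation~\eqref{eq:indirect_dl_approach}, I would invoke the uniqueness clause of Theorem~\ref{thm:heat_solvability_l2}: items~(i) and~(ii) are immediate from $u_n \in C(\overline{Q}) \cap C^\infty(Q)$ with $u_n(\cdot,0)=0$, item~(iv) follows from $u_n|_{\Sigma}=g_n$ by continuity up to the lateral boundary, and item~(iii) follows from the pointwise estimate~\eqref{eq:definition_and_estimate_nontangential_max} together with $\sup_{\overline{Q}}|u_n| < \infty$ and the finiteness of $|\Sigma|$. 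Consequently $u_n = W\bigl((-\tfrac12 I + K)^{-1} g_n\bigr)$ for every~$n$.

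Because $(-\tfrac12 I + K)$ is an isometry on $L^2(\Sigma)$, the densities $\psi_n := (-\tfrac12 I + K)^{-1} g_n$ converge in $L^2(\Sigma)$ to $\psi := (-\tfrac12 I + K)^{-1} g$. Fixing a point $(\vx,t) \in Q$ we have $d := \dist(\vx,\Gamma) > 0$, so the kernel
\[
  (\vy,\tau) \mapsto \alpha\, \frac{\partial G_\alpha}{\partial \vn_\vy}(\vx-\vy,\, t-\tau),
\]
which vanishes for $\tau \ge t$ by~\eqref{eq:def_fundamental_solution}, is bounded on $\Sigma$ because the Gaussian factor $\exp(-d^2/(4\alpha(t-\tau)))$ dominates the algebraic blow-up as $\tau \uparrow t$; in particular it lies in $L^2(\Sigma)$. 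Cauchy--Schwarz then implies $u_n(\vx,t) = W\psi_n(\vx,t) \to W\psi(\vx,t) = u(\vx,t)$, and combined with the uniform bound from the previous paragraph this yields $|u(\vx,t)| \le \|g\|_{L^\infty(\Sigma)}$, as required. The main obstacle is the identification step in the previous paragraph: one must reconcile the pointwise, sup-norm viewpoint of the classical solution with the non-tangential $L^2$-framework of~\cite{Bro1989}, and it is exactly the uniqueness clause of Theorem~\ref{thm:heat_solvability_l2} that bridges these two solution concepts.
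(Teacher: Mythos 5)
Your proposal follows the same overall strategy as the paper's proof: approximate the $L^\infty$ datum $g$ by classical data $g_n \in C(\overline{\Sigma})$ with $g_n(\cdot,0)=0$ and $\|g_n\|_{L^\infty(\Sigma)} \le \|g\|_{L^\infty(\Sigma)}$, invoke the classical parabolic maximum principle (Proposition~\ref{prop:maximum_principle}) to get the uniform bound $\sup_{\overline Q}|u_n| \le \|g\|_{L^\infty(\Sigma)}$, and use the uniqueness clause of Theorem~\ref{thm:heat_solvability_l2} to identify $u_n$ with the double-layer representation. The two proofs diverge in the final limiting step. The paper shows $u_n \to u$ in $L^2(Q_\varepsilon)$ on $Q_\varepsilon = \{(\vx,t) \in Q : \dist(\vx,\Gamma) > \varepsilon\}$ and then argues by contradiction: if $|u| > \|g\|_{L^\infty(\Sigma)}$ somewhere, interior continuity of $u$ (Theorem~\ref{thm:heat_solvability_l2}~\emph{(ii)}) gives a set of positive measure where this holds, which contradicts the $L^2$-convergence. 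You instead prove pointwise convergence $u_n(\vx,t) \to u(\vx,t)$ at each fixed interior point by observing that the double-layer kernel, at positive distance $d$ from $\Gamma$, is bounded on $\Sigma$ (the Gaussian kills the blow-up as $\tau \uparrow t$), hence lies in $L^2(\Sigma)$; this is a bit more direct and avoids appealing to interior continuity. The one place you are slightly cavalier is the construction of $g_n$: mollifying in space on the Lipschitz surface $\Gamma$ is not quite ``routine'' (it requires local charts or an extension/restriction argument), whereas the paper sidesteps this entirely by taking any $L^2$-approximating sequence $f_n \in C(\overline{\Sigma})$, truncating it at the level $\|g\|_{L^\infty(\Sigma)}$ (which only decreases the pointwise error against $g$), and then multiplying by the piecewise-linear cutoff $\min(nt,1)$ near $t=0$. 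Both constructions work, but the paper's is more elementary and avoids the surface-mollification technicality.
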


\begin{proof}
The idea is to approximate the boundary datum $g$ in $L^2(\Sigma)$ by a sequence $\{g_n\}_n$ in $C(\overline{\Sigma})$ such that $g_n(\vx,0) = 0$ for all $\vx \in \Gamma$ and $\|g_n\|_{L^\infty(\Sigma)} \leq \|g\|_{L^\infty(\Sigma)}$, to construct solutions $u_n$ of the homogeneous heat equation by Theorems~\ref{thm:classical_solvability_heat_equation} and~\ref{thm:heat_solvability_l2} such that $u_n = g_n$ on $\Sigma$, and to show~\eqref{eq:max_at_boundary_l_infty} by contradiction using Proposition~\ref{prop:maximum_principle} for $u_n$ and a continuity argument.

We start with the construction of the sequence $\{g_n\}_n$. Due to the density of $C(\overline{\Sigma})$ in $L^2(\Sigma)$ we find a sequence $\{f_n\}_n$ in $C(\overline{\Sigma})$ such that $f_n \rightarrow g$ in $L^2(\Sigma)$. Let us first define $\widetilde{g}_n$ by
\begin{equation*}
  \widetilde{g}_n(\vx,t) := \begin{cases}
    f_n(\vx,t), \qquad &\text{if } (\vx,t) \text{ is such that } |f_n(\vx,t)|\leq \|g\|_{L^{\infty}(\Sigma)}, \\
    \sign(f_n(\vx,t)) \|g\|_{L^\infty(\Sigma)}, &\text{otherwise}.
  \end{cases}
\end{equation*}
It is easy to see that $\widetilde{g}_n \in C(\overline{\Sigma})$ and $\|\widetilde{g}_n\|_{L^\infty(\Sigma)} \leq \|g\|_{L^\infty(\Sigma)}$ for all $n\in \mathbb{N}$. In addition, $\widetilde{g}_n \rightarrow g$ in~$L^2(\Sigma)$, which follows from the estimate
\begin{equation*}
  |\widetilde{g}_n(\vx,t) - g(\vx,t)| \leq |f_n(\vx,t) - g(\vx,t)|
\end{equation*}  
for all $n \in \mathbb{N}$ and almost all $(\vx,t) \in \Sigma$. Since $\widetilde{g}_n(\vx,0)=0$ might be violated for some $\vx \in \Gamma$ we~set
\begin{equation*}
  g_n(\vx,t) := \begin{cases}
    n\, t\, \widetilde{g}_n(\vx,t), \qquad &\text{if } 0 \leq t \leq \frac{1}{n}, \\
    \widetilde{g}_n(\vx,t), \qquad &\text{otherwise.}
  \end{cases}
\end{equation*} 
By construction, there holds $g_n \in C(\overline{\Sigma})$, $g_n(\cdot,0)=0$ on $\Gamma$ and $\|g_n\|_{L^\infty(\Sigma)} \leq \|g\|_{L^\infty(\Sigma)}$ for all~$n \in \mathbb{N}$. In addition, $g_n - \widetilde{g}_n \rightarrow 0$ in $L^2(\Sigma)$ which implies the convergence of $g_n$ to $g$ in $L^2(\Sigma)$. Therefore, $\{g_n\}_n$ is a sequence in $C(\overline{\Sigma})$ with the desired properties.

By Theorem~\ref{thm:classical_solvability_heat_equation}, we can find for each $n \in \mathbb{N}$ a unique $u_n \in C^\infty(Q) \cap C(\overline{Q})$ which solves the heat equation \eqref{eq:heat_equation} and \eqref{eq:initial_condition} and satisfies $u_n = g_n$ on $\Sigma$. From the classical parabolic maximum principle in Proposition~\ref{prop:maximum_principle} it follows that 
\begin{equation} \label{eq:estimate_maximum_u_n}
  \sup\{|u_n(\vx,t)| : (\vx,t) \in Q\} \leq \|g_n\|_{L^\infty(\Sigma)} \leq \|g\|_{L^\infty(\Sigma)}.
\end{equation}
Together with \eqref{eq:definition_and_estimate_nontangential_max} this yields $N(u_n) \leq \|g\|_{L^\infty(\Sigma)}$ on $\Sigma$ and thus $N(u_n) \in L^2(\Sigma)$. Hence, $u_n$ satisfies all properties of Theorem~\ref{thm:heat_solvability_l2} and we obtain the representation
\begin{equation*}
  u_n = W \bigg( \left(-\frac{1}{2}I + K\right)^{-1} g_n \bigg).
\end{equation*}
With this representation we can show that $u_n \rightarrow u$ locally in $Q$ in $L^2$. In fact, let $Q_\varepsilon := \{(\vx,t) \in Q : \dist(\vx, \Gamma) > \varepsilon \}$. Then the convergence of~$u_n$ to~$u$ in $L^2(Q_\varepsilon)$ follows immediately from $g_n \rightarrow g$ in $L^2(\Sigma)$, since $(-\frac{1}{2}I + K)$ is an isomorphism in $L^2(\Sigma)$ as stated in Theorem~\ref{thm:heat_solvability_l2} and $W: L^2(\Sigma) \rightarrow L^2(Q_\varepsilon)$ is continuous, which is easy to see.

Suppose now that \eqref{eq:max_at_boundary_l_infty} does not hold true. Then there exists a space-time point $(\vx_0,t_0) \in Q$ such that $|u(\vx_0,t_0)| > \|g\|_{L^{\infty}(\Sigma)}$. Since $u$ is continuous in $Q$ due to Theorem~\ref{thm:heat_solvability_l2}~\emph{(ii)}, we can find some $\varepsilon > 0$, $\delta > 0$ and an open set $A \subset Q_\varepsilon$ with measure $|A|>0$ such that $(\vx_0,t_0) \in A$ and $|u(\vx,t)|> \|g\|_{L^{\infty}(\Sigma)} + \delta$ for all $(\vx,t) \in A$. Together with \eqref{eq:estimate_maximum_u_n} it follows that
\begin{equation*}
  \iint_{Q_\varepsilon} |u_n(\vx,t) - u(\vx,t)|^2\,\dif\vx\,\dif t
  \geq \iint_A |u_n(\vx,t) - u(\vx,t)|^2\,\dif\vx\,\dif t > \delta^2 |A|
\end{equation*}
for all $n \in \mathbb{N}$, which is a contradiction to $u_n \rightarrow u$ in $L^2(Q_\varepsilon)$. Therefore, \eqref{eq:max_at_boundary_l_infty} is satisfied.
\end{proof}

\section{An integration by parts formula for the evaluation of \texorpdfstring{$\langle D \cdot, \cdot \rangle$}{the bilinear form of D}} \label{sec:ibp_formula}
The hypersingular boundary integral operator $D: H^{1/2,1/4} \rightarrow H^{-1/2,-1/4}(\Sigma)$ was introduced in Section~\ref{sec:heat_equation} as $-\alpha \ntrace W$, so formally it is given by
\begin{equation*}
  D q (\vx, t) = - \alpha \frac{\partial}{\partial \vn_\vx} \int_{0}^t \int_{\Gamma} 
    \alpha \frac{\partial G_\alpha}{\partial \vn_\vy}(\vx-\vy, t - \tau) q(\vy, \tau)\, \dif\vs_\vy\, \dif\tau.
\end{equation*}
A major difficulty when dealing with this operator is to find an explicit representation for its evaluation. Since the kernel $((\vx,t),(\vy,\tau)) \mapsto \partial/\partial \vn_\vx \partial/\partial \vn_\vy G_\alpha (\vx-\vy,t-\tau)$ is not integrable in a vicinity of the diagonal $(\vx,t)=(\vy,\tau)$ one cannot simply exchange the order of differentiation and integration in the above formula. Often however, one does not consider the operator $D$ itself but the associated bilinear form~$\langle D \cdot, \cdot \rangle_\Sigma$ on $H^{1/2,1/4}(\Sigma) \times H^{1/2,1/4}(\Sigma)$. For this bilinear form an alternative representation via integration by parts is available, which eventually allows for an evaluation by means of weakly singular integrals. A general form of this representation formula is provided in the following theorem. Its proof is one of the main results in this paper.
\begin{theorem} \label{thm:general_ibp_formula}
  Let $\Gamma$ be the boundary of a bounded Lipschitz domain and $\Sigma = \Gamma \times (0,T)$. For~$\varphi, \psi \in H^{1/2,1/4}(\Sigma)$ there holds the integration by parts formula
  \begin{equation} \label{eq:general_ibp_formula}
    \langle D \varphi, \psi \rangle_\Sigma = 
      \alpha^2 \langle \scurl \psi, V (\scurl \varphi) \rangle_{\Sigma} + \alpha \, b(\varphi, \psi).
  \end{equation}
  Here, the single layer boundary integral operator $V$ is applied componentwise to $\scurl \varphi$ and the bilinear form $b(\cdot,\cdot): H^{1/2,1/4}(\Sigma) \times H^{1/2,1/4}(\Sigma) \rightarrow \mathbb{R}$ is defined by
  \begin{equation} \label{eq:def_b}
    b(\varphi, \psi) 
    := \left( \frac{\partial}{\partial t} (\dtrace)' \Big( V(\varphi \vn) \cdot \vn \Big) \right)[\widetilde{\psi}]
    :=- \left\langle V (\varphi \vn), \frac{\partial}{\partial t} \psi \vn \right\rangle_{\Sigma},
  \end{equation}
  for $\varphi \in H^{1/2,1/4}(\Sigma)$, $\psi \in \dtrace(C^\infty_c(\mathbb{R}^3\times(0,T)))$ and $\widetilde{\psi} \in C^\infty_c(\mathbb{R}^3 \times (0,T))$ such that $\psi = \widetilde{\psi}|_{\Sigma}$, and as its continuous extension for general $\psi \in H^{1/2,1/4}(\Sigma)$.  
\end{theorem}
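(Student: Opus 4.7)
The plan is to prove the identity on a dense smooth subspace and then extend by continuity. By Proposition~\ref{prop:density_anisotropic_Q}, $\dtrace\bigl(C^\infty_c(\overline{\Omega}\times(0,T))\bigr)$ is dense in $H^{1/2,1/4}(\Sigma)$. The left-hand side of \eqref{eq:general_ibp_formula} is continuous on $H^{1/2,1/4}(\Sigma)\times H^{1/2,1/4}(\Sigma)$ because $D$ is, the first summand on the right-hand side is continuous by the continuity of $\scurl$ and $V$, and $\alpha\,b(\varphi,\psi)$ is by construction the continuous extension of its definition on the smooth subspace. It therefore suffices to prove \eqref{eq:general_ibp_formula} for $\varphi=\dtrace\widetilde\varphi$ and $\psi=\dtrace\widetilde\psi$ with $\widetilde\varphi,\widetilde\psi\in C^\infty_c(\mathbb{R}^3\times(0,T))$.

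Fix such smooth data and put $u:=W\varphi\in H^{1,1/2}_{;0,}(Q,\partial/\partial t-\alpha\Delta)$. Since $W\varphi$ satisfies the heat equation in $Q$, applying the defining formula \eqref{eq:def_ntrace} of $\ntrace$ with the admissible extension $v:=\widetilde\psi|_Q\in H^{1,1/2}_{;,0}(Q)$ collapses the left-hand side of \eqref{eq:general_ibp_formula} to
\begin{equation*}
  \langle D\varphi,\psi\rangle_\Sigma = \alpha\int_Q \nabla u\cdot\nabla\widetilde\psi\,\dif\vx\,\dif t + \alpha\, d(u,\widetilde\psi|_Q).
\end{equation*}
A time integration by parts on $d(u,\widetilde\psi|_Q)$, legitimate because $u|_{t=0}=0$ and $\widetilde\psi|_{t=T}=0$, further identifies this contribution with $-\alpha\int_Q u\,\partial_t\widetilde\psi\,\dif\vx\,\dif t$.

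The core of the proof is then to recast these two volume integrals as surface duality pairings on $\Sigma$. Starting from the distributional identity $W\varphi = -\alpha\,\nabla\cdot\widetilde V(\varphi\vn)$ on $\mathbb{R}^3\times\mathbb{R}$, which follows from \eqref{eq:def_slp}, \eqref{eq:def_dlp} and $\nabla_\vx G_\alpha=-\nabla_\vy G_\alpha$, and combining the vector identity $\nabla(\nabla\cdot F)=\Delta F+\nabla\times(\nabla\times F)$ with the relation $\alpha\Delta\widetilde V(\varphi\vn)=\partial_t\widetilde V(\varphi\vn)$—a consequence of $(\partial/\partial t-\alpha\Delta)G_\alpha=\delta_{(\vzero,0)}$ that holds pointwise in $Q$ because $(\dtrace)'(\varphi\vn)$ is supported on $\Sigma$—I would obtain a pointwise-in-$Q$ decomposition of $\nabla u$ into a time-derivative piece and a double-curl piece. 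Tested against $\nabla\widetilde\psi$, the double-curl piece collapses via spatial integration by parts (using $\nabla\times\nabla\widetilde\psi=0$ and $\scurl\psi=\nabla\widetilde\psi\times\vn$ from Proposition~\ref{prop:surface_curl}) to a surface integral involving the trace of $\nabla\times\widetilde V(\varphi\vn)$; a further surface integration by parts on the closed boundary $\Gamma$, using the identity $\nabla_\vy G_\alpha\times(\widetilde\varphi\vn)=\nabla_\vy\times(G_\alpha\widetilde\varphi\vn)-G_\alpha\,\nabla_\vy\times(\widetilde\varphi\vn)$ together with the vanishing of the integral of a surface curl over a closed surface, transfers the curl onto $\widetilde\varphi$ and identifies this expression as $\alpha^2\,\langle\scurl\psi,V(\scurl\varphi)\rangle_\Sigma$. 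The time-derivative piece, combined with the $d$-term contribution and a further spatial integration by parts on $\Omega$ using once more $W\varphi=-\alpha\,\nabla\cdot\widetilde V(\varphi\vn)$, telescopes into the surface duality pairing $\alpha\,b(\varphi,\psi)$ of \eqref{eq:def_b}.

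I expect the principal obstacle to be this last step. The vector identity $\nabla(\nabla\cdot F)=\Delta F+\nabla\times(\nabla\times F)$ is only a pointwise statement in the interior of $Q$, where $\widetilde V(\varphi\vn)$ is smooth; the boundary terms on $\Sigma$ arising from the spatial integrations by parts must be tracked carefully, and the surface integration by parts on $\Gamma$—in particular the vanishing of the total surface curl—has to be justified in a suitably weak sense. One also has to verify that the resulting expressions are independent of the smooth extensions $\widetilde\varphi,\widetilde\psi$ chosen in the density reduction, and that the various pieces combine with consistent signs and factors of $\alpha$ into the distributional form \eqref{eq:def_b}. Once the identity is established on the smooth subspace, the density argument of the first paragraph completes the proof.
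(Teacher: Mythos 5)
Your algebraic skeleton coincides with the paper's: the identity $W\varphi=-\alpha\,\nabla\cdot\widetilde V(\varphi\vn)$, the decomposition of $\nabla(\nabla\cdot F)$ into $\Delta F$ plus a double curl, and the use of $(\partial_t-\alpha\Delta)G_\alpha=\delta_\vzero$ to trade the Laplacian for a time derivative are exactly the content of Proposition~\ref{prop:distributional_representation_grad_dlp}, and your surface integration by parts on $\Gamma$ to move the curl onto $\varphi$ is Lemma~\ref{lem:curl_and_scurl}. The density reduction at the end is also the same as the paper's. So the building blocks are the right ones.

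The genuine gap is the step you flag yourself as ``the principal obstacle,'' and it is not a peripheral technicality but the heart of the argument. You propose to integrate by parts on $Q=\Omega\times(0,T)$ directly and read off boundary terms on $\Sigma$. But the double-curl piece and the time-derivative piece of $\nabla W\varphi$ are only $C^\infty$ \emph{strictly inside} $Q$; on $\Sigma$ itself their traces are singular integral operators (the gradient of the single-layer potential, the curl of $\widetilde V(\varphi\vn)$) whose classical divergence/Green formulas on a Lipschitz cylinder are not available without substantial additional regularity theory. Even with $\widetilde\varphi,\widetilde\psi$ smooth, $\widetilde V(\varphi\vn)$ does not extend to a $C^1(\overline Q)$ function, so the Gauss-type identities you invoke to ``collapse'' the volume integrals to surface integrals on $\Sigma$ are not justified. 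Saying the boundary terms ``must be tracked carefully'' and ``justified in a suitably weak sense'' leaves the actual proof unfinished precisely at its hardest point.

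The paper resolves this by never integrating by parts up to $\Sigma$. It carries out the entire calculation on auxiliary boundaries $\Sigma_m=\Gamma_m\times(0,T)$ with $\overline\Omega_m\subset\Omega$, where every quantity is $C^\infty$ and the manipulations are classical (Proposition~\ref{prop:interior_ibp_formula} and Lemma~\ref{lem:first_part_interior_ibp_formula}). The remaining work is then a careful limit $\Sigma_m\to\Sigma$ using the Verchota-type approximation of a Lipschitz domain by smooth interior domains (Theorem~\ref{thm:approximating_domains}); this limit occupies Lemmata~\ref{lem:convergence_bf_ntrace}--\ref{lem:convergence_time_bf} and is where the extended parabolic maximum principle, the weak definition of the bilinear form $d$, and the non-tangential convergence of $\Lambda_m(\vx)$ to $\vx$ all enter. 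Without some such regularization your direct integration by parts on $\Omega$ does not go through, and your plan as written contains no mechanism to replace it.

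One smaller remark: your first display has the wrong sign relative to the paper's convention $\langle D\varphi,\psi\rangle_\Sigma=-\alpha\langle\ntrace W\varphi,\psi\rangle_\Sigma$, which propagates into the later bookkeeping you mention; and your restriction of $\varphi$ to the smooth subspace is unnecessary — the paper only needs $\psi$ smooth because the time derivative in $b$ lands on $\psi$, whereas restricting $\varphi$ as well would require you to also justify that the limits commute with the density approximation in the $\varphi$ argument.
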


We will give a rigorous proof of this theorem in Section~\ref{subsec:proof_general_ibp_formula}. A corresponding result for the~2D case has been given in \cite[Theorem 6.1]{Cos1990}, including an outline of the proof. In that paper the bilinear form $b$ is represented by $b{(\varphi, \psi) = \langle \partial/\partial t\, V (\varphi \vn), \psi \vn \rangle_\Sigma}$ and it is stated in the proof that it has to be interpreted in the sense of a continuous extension. For the 3D case a similar statement can be found in \cite[Section~4.7]{Doh2019} and \cite[Section~3.1.3]{Mes2014}, but no proof is given. In addition, the latter authors formulate the result in a less rigorous way and do not clarify how the second term on the right-hand side of \eqref{eq:general_ibp_formula}, which they represent as 
\begin{equation} \label{eq:formal_def_second_bilinear_form}
  - \alpha \int_{0}^T \int_{\Gamma} \psi(\vx, t) \vn(\vx)\cdot \int_{0}^t \int_{\Gamma} 
    \frac{\partial G_{\alpha}}{\partial\tau}(\vx-\vy, t-\tau) \varphi(\vy, \tau) \vn(\vy) 
    \, \dif\vs_\vy \, \dif\tau \, \dif\vs_\vx \, \dif t,
\end{equation} 
has to be understood for general $\varphi$ and $\psi$. In Proposition~\ref{prop:kernel_not_integrable} we will show that the kernel of this bilinear form, i.e.~the function $((\vx,t),(\vy,\tau)) \mapsto \partial / \partial \tau\, G_\alpha(\vx-\vy, t-\tau)$ is not Lebesgue integrable on~$\Sigma \times \Sigma$. This makes it even more difficult to give a suitable meaning to representation~\eqref{eq:formal_def_second_bilinear_form}. 

Since we define the bilinear form $b$ as continuous extension of \eqref{eq:def_b} it is a priori not clear, how to evaluate it for nonsmooth $\psi$. The reason is that neither $V (\varphi \vn)$ nor $\psi$ does admit a weak derivative with respect to time in general, which is why the second term in \eqref{eq:def_b} has to be understood in the stated distributional sense for smooth $\psi$ as above. In Theorem~\ref{thm:second_bilinear_form_tensor_case} we present an alternative representation of $b$ valid for certain classes of functions, which overcomes this deficiency. 
\subsection{A proof of the general integration by parts formula} \label{subsec:proof_general_ibp_formula}
The proof of Theorem~\ref{thm:general_ibp_formula} is split into three main steps, to each of which we dedicate a separate paragraph. In the first paragraph we derive an alternative representation of $\alpha \nabla W \varphi$. The steps in the second and third paragraph are based on the ideas given in \cite[Proof of Theorem~6.1]{Cos1990}. We show an integration by parts formula on an auxiliary boundary inside of the space-time domain~$Q$ in the second paragraph, using the representation of $\alpha \nabla W \varphi$ from the first one. In the third paragraph we construct a sequence of auxiliary boundaries~$\Sigma_m$ inside of $Q$ which approximate~$\Sigma$ and show that the integration by parts formula on $\Sigma$ is obtained from the formulas on $\Sigma_m$ in the limit as $m$ tends to infinity. The actual proof of Theorem~\ref{thm:general_ibp_formula} is given at the end of the third paragraph. 

\subsubsection*{An alternative representation of the gradient of the double layer potential}

The double layer potential $W\varphi$ of a function $\varphi \in H^{1/2,1/4}(\Sigma)$ is given by \eqref{eq:def_dlp}, i.e.~the convolution of the fundamental solution $G_\alpha$ in \eqref{eq:def_fundamental_solution} with the distribution $(\alpha \ntrace)'\varphi$. We use this definition to derive an alternative representation of its distributional gradient. This approach is motivated by the proof of the integration by parts formula for elliptic operators in \cite[Section~3.3.4]{SauSch2010}.
\begin{proposition} \label{prop:distributional_representation_grad_dlp}
  Let $\varphi \in H^{1/2,1/4}(\Sigma)$ and $u=W\varphi$ be defined by \eqref{eq:def_dlp}. Then there holds
  \begin{equation} \label{eq:distributional_representation_grad_dlp}
    \alpha \nabla u
    = - G_\alpha \ast \left( \alpha \curl \curl (\vdtrace)'(\alpha \varphi \vn) 
        + \frac{\partial}{\partial t} (\vdtrace)'(\alpha \varphi \vn) \right)
      + (\dtrace)'(\alpha \varphi \vn)
  \end{equation}
  in the distributional sense on $\mathbb{R}^3 \times \mathbb{R}$, where the convolution is understood componentwise. In particular, inside of $Q$ there holds
  \begin{equation} \label{eq:representation_grad_dlp_in_Q}
    \alpha \nabla u 
    = - \left( \alpha \curl \curl (\vdtrace)'(\alpha \varphi \vn) \right) \ast G_\alpha 
      - \left( \frac{\partial}{\partial t} (\vdtrace)'(\alpha \varphi \vn) \right) \ast G_\alpha
  \end{equation}
  in the classical sense, and the terms on the right-hand side can be understood as functions in~$C^\infty(Q)$.
\end{proposition}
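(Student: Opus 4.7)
The plan is to derive \eqref{eq:distributional_representation_grad_dlp} directly from the defining formula $W\varphi = G_\alpha \ast (\alpha\ntrace)'\varphi$ by manipulating the adjoint Neumann trace distributionally. The crucial observation is that $(\alpha\ntrace)'\varphi$ is the negative divergence of a vector distribution supported on $\Sigma$; once this is identified, the heat fundamental-solution identity $\partial_t G_\alpha - \alpha\Delta G_\alpha = \delta_\vzero$ in $\mathcal{D}'(\mathbb{R}^3\times\mathbb{R})$ trades a Laplacian for a time derivative and a Dirac spike, the latter being exactly the boundary term $(\vdtrace)'(\alpha\varphi\vn)$ that survives in \eqref{eq:distributional_representation_grad_dlp}.

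First, I would show that $(\alpha\ntrace)'\varphi = -\diver(\vdtrace)'(\alpha\varphi\vn)$ in $\mathcal{E}'(\mathbb{R}^3\times\mathbb{R})$. For every $w \in C^\infty(\mathbb{R}^3\times\mathbb{R})$, one has by \eqref{eq:adjoint_ntrace} and the definition of $\vdtrace$
\[
(\alpha\ntrace)'\varphi[w] = \langle \alpha\varphi, \vn\cdot\nabla w\rangle_\Sigma = (\vdtrace)'(\alpha\varphi\vn)[\nabla w] = -\diver\bigl((\vdtrace)'(\alpha\varphi\vn)\bigr)[w],
\]
using that $\vn \in \mathbf{L}^\infty(\Gamma)$ makes $\alpha\varphi\vn$ an element of $\mathbf{L}^2(\Sigma) \subset \mathbf{H}^{-1/2,-1/4}(\Sigma)$, so that $(\vdtrace)'$ applies componentwise. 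Combining this with the commutativity of distributional derivatives and convolutions from Proposition~\ref{prop:properties_convolution_distribution} yields
\[
\alpha\nabla u = -\alpha\, G_\alpha \ast \nabla\diver(\vdtrace)'(\alpha\varphi\vn).
\]

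Next, I would apply the standard vector identity $\nabla\diver = \Delta + \curl\curl$, which extends to vector distributions because it reduces to equality of mixed partial derivatives, and substitute $\alpha\Delta G_\alpha = \partial_t G_\alpha - \delta_\vzero$ into the resulting Laplacian term. A second application of Proposition~\ref{prop:properties_convolution_distribution} gives, for any vector distribution $\vS \in \mathcal{E}'(\mathbb{R}^3\times\mathbb{R})$,
\[
\alpha\, G_\alpha \ast \Delta\vS = (\partial_t G_\alpha)\ast\vS - \delta_\vzero\ast\vS = G_\alpha\ast\partial_t\vS - \vS.
\]
Setting $\vS = (\vdtrace)'(\alpha\varphi\vn)$ and collecting terms produces \eqref{eq:distributional_representation_grad_dlp} exactly. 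For \eqref{eq:representation_grad_dlp_in_Q}, I would then use that $\supp(\vdtrace)'(\alpha\varphi\vn)\subset\Sigma$ is disjoint from the open set $Q$, so the last term in \eqref{eq:distributional_representation_grad_dlp} vanishes when restricted to $Q$. Each remaining convolution evaluated at $(\vx,t)\in Q$ amounts to pairing a distribution supported on $\Sigma$ against the translate $G_\alpha(\vx-\cdot\,,t-\cdot)$; since $\dist((\vx,t),\Sigma)>0$ the kernel is $C^\infty$ on a neighbourhood of $\Sigma$, and differentiation under the duality pairing yields both pointwise evaluation and $C^\infty$-regularity in $(\vx,t)$, as in the classical smooth-kernel theory of layer potentials.

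The main obstacle is the first step — the rigorous reformulation $(\alpha\ntrace)'\varphi = -\diver(\vdtrace)'(\alpha\varphi\vn)$. The chain of equalities is formally immediate for smooth $\varphi$ and $\vn$, but one must carefully check that the componentwise application of $(\vdtrace)'$ to $\alpha\varphi\vn$ is well-defined on a Lipschitz boundary (where $\vn$ is only essentially bounded) and that the resulting vector distribution indeed has compact support in $\mathbb{R}^3\times\mathbb{R}$, so that Proposition~\ref{prop:properties_convolution_distribution} is applicable throughout. Once this is settled, the remainder of the proof is a short sequence of manipulations justified entirely by Proposition~\ref{prop:properties_convolution_distribution} and the fundamental-solution property of $G_\alpha$.
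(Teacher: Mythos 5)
Your proof is correct and takes essentially the same route as the paper: both use the reformulation of $(\alpha\ntrace)'\varphi$ via $(\vdtrace)'(\alpha\varphi\vn)$ and a divergence, the vector identity $\nabla\diver=\Delta+\curl\curl$, the fundamental-solution property $\partial_t G_\alpha-\alpha\Delta G_\alpha=\delta_\vzero$, the convolution rules of Proposition~\ref{prop:properties_convolution_distribution}, and the support argument together with smoothness of $G_\alpha$ away from the origin for the restriction to $Q$. The only difference is organizational — you abstract the identity $(\alpha\ntrace)'\varphi=-\diver(\vdtrace)'(\alpha\varphi\vn)$ as a standalone step before manipulating convolutions, while the paper carries this computation along inside the test-function chain; mathematically they coincide.
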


\begin{proof}
  The double layer potential $u=W\varphi$ is defined in \eqref{eq:def_dlp} as the convolution of two distributions and therefore it is itself a distribution on $\mathbb{R}^3 \times \mathbb{R}$. We want to take the distributional gradient of it. For this purpose, let $\vv \in \mathbf{C}^\infty_c(\mathbb{R}^3 \times \mathbb{R})$. Then there holds
  \begin{equation} \label{eq:distributional_gradient_dlp}
    \begin{split}
      \alpha \nabla u \, [\vv] &= - \alpha u[\diver \vv] 
        = - \alpha \Big( G_\alpha \ast ((\alpha \ntrace)'\varphi)\Big) [\diver \vv] \\
        &= -\alpha (G_\alpha)_{(\vx,t)}\left[ ((\alpha \ntrace)'\varphi)_{(\vy,\tau)} [ 
          \diver \vv (\cdot_{\vx} + \cdot_{\vy}, \cdot_{t} + \cdot_{\tau}) ]  \right],
    \end{split}
  \end{equation}
  where we used the definition of the convolution of distributions in \eqref{eq:def_convolution_distributions}. With the definition of~$(\ntrace)'$ in \eqref{eq:adjoint_ntrace} and $(\dtrace)'$ in \eqref{eq:adjoint_dtrace} it follows that
  \begin{align*}
    ((\alpha \ntrace)'\varphi&)_{(\vy,\tau)} [ \diver \vv (\vx + \cdot_{\vy}, t + \cdot_{\tau}) ] \\
    &= \langle ( \alpha \ntrace )_{\vy,\tau} (\diver \vv(\vx+\cdot_\vy, t+\cdot_\tau)), \varphi \rangle_\Sigma \\
    &= \int_0^T\int_\Gamma \varphi(\vy,\tau)\, \alpha \vn(\vy)\cdot \nabla \diver \vv (\vx+\vy, t+\tau) \, \dif\vs_{\vy}\, \dif \tau \\
    &= ((\dtrace)'(\alpha \varphi \vn))_{(\vy,\tau)} [\nabla \diver \vv (\vx + \cdot_{\vy},t + \cdot_{\tau})]
  \end{align*}
  for each $(\vx,t) \in \mathbb{R}^3 \times \mathbb{R}$. By inserting this into \eqref{eq:distributional_gradient_dlp} and using ${\nabla \diver \vv = \curl \curl \vv + \upDelta \vv}$, where $\upDelta$ is applied componentwise to $\vv$, we get
  \begin{align*}
    \alpha \nabla u \, [\vv] &= -\alpha(G_\alpha)_{(\vx,t)}\left[ ((\dtrace)'(\alpha \varphi \vn))_{(\vy,\tau)} [ 
      (\curl\curl + \upDelta) \vv (\cdot_{\vx} + \cdot_{\vy}, \cdot_{t} + \cdot_{\tau}) ]  \right] \\
      &= -\alpha \Big( G_\alpha \ast ((\dtrace)'(\alpha \varphi \vn))\Big) [\curl\curl \vv + \upDelta \vv] \\
      &= -\alpha \curl\curl \Big(G_\alpha \ast ((\dtrace)'(\alpha \varphi \vn))\Big) [\vv]
        -\alpha \upDelta \Big( G_\alpha \ast ((\dtrace)'(\alpha \varphi \vn))\Big) [\vv].
  \end{align*}
  Here and in the following the convolution of $G_\alpha$ and a vector valued distribution like $(\dtrace)'(\alpha \varphi \vn)$ is understood componentwise. By differentiation rules for convolutions of distributions, which follow from the one stated in Proposition~\ref{prop:properties_convolution_distribution}, it follows 
  \begin{align*}
    -\alpha \curl\curl \Big(G_\alpha \ast ((\dtrace)'(\alpha \varphi \vn))\Big) [\vv] &=  
      -\Big(G_\alpha \ast \Big(\alpha \curl\curl (\dtrace)'(\alpha \varphi \vn)\Big)\Big) [\vv], \\
    -\alpha \upDelta \Big( G_\alpha \ast ((\dtrace)'(\alpha \varphi \vn))\Big) [\vv] &=
      \Big((-\alpha \upDelta G_\alpha) \ast ((\dtrace)'(\alpha \varphi \vn))\Big) [\vv].
  \end{align*}
  Since $G_\alpha$ is a fundamental solution of the heat equation~\eqref{eq:heat_equation} there holds ${\partial/\partial t\, G_\alpha-\alpha \upDelta G_\alpha = \delta_\vzero}$, where~$\delta_\vzero$ denotes the delta distribution concentrated at $\vzero$. As a consequence we can rewrite the second equation as
  \begin{align*}
    \Big((-\alpha \upDelta G_\alpha) \ast ((\dtrace)'(\alpha \varphi \vn))\Big) [\vv] &=
      \left( \left(-\frac{\partial}{\partial t} G_\alpha + \delta_\vzero  \right) 
        \ast ((\dtrace)'(\alpha \varphi \vn))\right) [\vv] \\
      &= - \left(G_\alpha \ast \bigg(\frac{\partial}{\partial t} (\dtrace)'(\alpha \varphi \vn)\bigg)\right) [\vv]
      + (\dtrace)'(\alpha \varphi \vn) [\vv].
  \end{align*}
  Collecting all results we see that
  \begin{align*}
    \alpha \nabla u \, [\vv] = - \left( G_\alpha \ast \left( \alpha \curl \curl (\vdtrace)'(\alpha \varphi \vn)  
      + \frac{\partial}{\partial t} (\vdtrace)'(\alpha \varphi \vn) \right) \right) [\vv]
      + (\dtrace)'(\alpha \varphi \vn)[\vv].
  \end{align*}
  Since $\vv \in C^\infty_c(\mathbb{R}^3 \times \mathbb{R})$ was arbitrary, we conclude that \eqref{eq:distributional_representation_grad_dlp} holds. 

  Let us now interpret both sides of \eqref{eq:distributional_representation_grad_dlp} as elements in $\mathcal{D}'(Q)$ by restricting the test functions to $\mathbf{C}^\infty_c(Q)$. Since the support of $(\dtrace)'(\alpha \varphi \vn)$ is a subset of $\overline{\Sigma}$ there holds 
  \begin{equation*}
    \alpha \nabla u \, [\vv] = - \left( G_\alpha \ast \left( \alpha \curl \curl (\vdtrace)'(\alpha \varphi \vn)  
      + \frac{\partial}{\partial t} (\vdtrace)'(\alpha \varphi \vn) \right) \right) [\vv]
  \end{equation*}
  for all $\vv \in \mathbf{C}^\infty_c(Q)$. The left-hand side can be interpreted as a regular distribution on $Q$ induced by $\alpha \nabla u \in \mathbf{L}^2(Q)$, because $u \in H^{1,1/2}_{;0,}(Q)$. The first term on the right-hand side can be rewritten as
  \begin{equation*}
    G_\alpha \ast \Big( \alpha \curl \curl (\vdtrace)'(\alpha \varphi \vn) \Big) 
      = \Big( \alpha \curl \curl (\vdtrace)'(\alpha \varphi \vn) \Big) \ast G_\alpha.
  \end{equation*}
  \ifshort
   Since ${G_\alpha \in C^\infty((\mathbb{R}^3 \times \mathbb{R}) \backslash \{0\})}$ and the distribution $\curl \curl (\vdtrace)'(\alpha \varphi \vn)$ has support in $\overline{\Sigma}$, one can show that $(\curl \curl (\vdtrace)'(\alpha \varphi \vn) ) \ast G_\alpha$ is a regular distribution on $Q$ and can be interpreted as a function in $\mathbf{C}^\infty(Q)$. 
  \else
    The distribution $\curl \curl ((\vdtrace)'(\alpha \varphi \vn)$ has support in $\overline{\Sigma}$, so it follows from Proposition~\ref{prop:convolution_locally_regular_distribution} and Remark~\ref{rem:convolution_local_nonsmooth} that $\curl \curl ((\vdtrace)'(\alpha \varphi \vn) ) \ast G_\alpha$ is a regular distribution on $Q$ and can be interpreted as a function in $\mathbf{C}^\infty(Q)$ in the sense of \eqref{eq:def_convolution_local_nonsmooth}. 
  \fi
  The same arguments apply to 
  \begin{equation*}
    G_\alpha \ast \left( \frac{\partial}{\partial t} (\vdtrace)'(\alpha \varphi \vn) \right)
      = \left( \frac{\partial}{\partial t} (\vdtrace)'(\alpha \varphi \vn) \right) \ast G_\alpha. \qedhere
  \end{equation*}
\end{proof}

For later reference we define $\vg_1, \vg_2 \in \mathbf{C}^\infty(Q)$ in agreement with Proposition~\ref{prop:distributional_representation_grad_dlp} by
\begin{align}
  \vg_1 &:= -\left( \alpha \curl \curl (\vdtrace)'(\alpha \varphi \vn) \right) \ast G_\alpha, 
  \label{eq:def_g1} \\
  \vg_2 &:= -\left( \frac{\partial}{\partial t} (\vdtrace)'(\alpha \varphi \vn) \right) \ast G_\alpha.
  \label{eq:def_g2}
\end{align}
Equation~\eqref{eq:representation_grad_dlp_in_Q} can then be written in the short form
\begin{equation} \label{eq:representation_grad_dlp_short}
  \alpha \nabla u = \vg_1 + \vg_2 \qquad \text{in } Q.  
\end{equation}

\subsubsection*{An integration by parts formula inside the space-time domain $Q$}

The next major step in the proof of Theorem~\ref{thm:general_ibp_formula} is to show a result equivalent to the integration by parts formula \eqref{eq:general_ibp_formula} on an auxiliary boundary $\Sigma_m$ inside of $Q$. This is formulated in Proposition~\ref{prop:interior_ibp_formula}. The approach is motivated by the smoothness of $u = W \varphi$ or rather $\alpha \nabla u$ inside of $Q$, which permits us to represent normal derivatives of $u$ on $\Sigma_m$ in a classical way. Furthermore it allows us to interpret derivatives appearing in duality products $\langle \cdot, \cdot \rangle_{\Sigma_m}$ in a distributional sense and thus to integrate by parts.

\begin{proposition} \label{prop:interior_ibp_formula}
  Let $\Omega_m$ be a Lipschitz domain satisfying ${\overline{\Omega}_m \subset \Omega}$ with outward normal vector~$\nvecm$ and boundary $\Gamma_m := \partial \Omega_m$. Let $Q_m := \Omega_m \times (0,T)$ and $\Sigma_m := \Gamma_m \times (0,T)$. Let $\varphi \in H^{1/2,1/4}(\Sigma)$, $u = W\varphi$ and $\psi_m \in H^{1/2,1/4}(\Sigma_m)$. Then
  \begin{equation} \label{eq:interior_ibp_formula}
    \begin{split}
      -\alpha\,\langle \ntracem u, \psi_m \rangle_{\Sigma_m} &= 
        \alpha^2 \langle \scurlm \psi_m, \dtracem \widetilde{V}(\curl_{\Sigma} \varphi) \rangle_{\Sigma_m} \\
        &+ \alpha \, \left\langle \nvecm\cdot 
          \dtracem\left(\frac{\partial}{\partial t} \widetilde{V} (\varphi \vn ) \right), 
          \psi_m \right\rangle_{\Sigma_m}.
    \end{split}
  \end{equation}
\end{proposition}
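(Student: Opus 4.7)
The plan is to apply Proposition~\ref{prop:distributional_representation_grad_dlp} to write $\alpha \nabla u = \vg_1 + \vg_2$ inside $Q$ and then handle each summand separately. Commuting the differential operators with the convolution by~$G_\alpha$ via Proposition~\ref{prop:properties_convolution_distribution}, and identifying $G_\alpha \ast (\vdtrace)'(\varphi \vn)$ with the componentwise single layer potential $\widetilde{V}(\varphi \vn)$, one gets the pointwise representations
\begin{equation*}
  \vg_1 = -\alpha^2 \curl \curl \widetilde{V}(\varphi \vn), \qquad
  \vg_2 = -\alpha\, \frac{\partial}{\partial t} \widetilde{V}(\varphi \vn) \qquad \text{in } Q.
\end{equation*}
Since $\Sigma_m \subset Q$ and both right-hand sides lie in $\mathbf{C}^\infty(Q)$, their Dirichlet trace on $\Sigma_m$ is a classical restriction. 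Expanding $-\alpha \langle \ntracem u, \psi_m\rangle_{\Sigma_m} = -\langle \nvecm \cdot (\vg_1 + \vg_2)|_{\Sigma_m}, \psi_m\rangle_{\Sigma_m}$, the contribution of $\vg_2$ is then already the second summand on the right-hand side of~\eqref{eq:interior_ibp_formula}.

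The core of the proof is the distributional identity
\begin{equation*}
  \curl \bigl((\vdtrace)'(\varphi \vn)\bigr) = (\vdtrace)'(\scurl \varphi) \quad \text{in } \mathcal{D}'(\mathbb{R}^3 \times \mathbb{R}),
\end{equation*}
which I would verify by pairing both sides with $\vv \in \mathbf{C}^\infty_c(\mathbb{R}^3 \times \mathbb{R})$: on the left, the definitions of the distributional curl and of $(\vdtrace)'$ give $\langle \varphi \vn, \vdtrace(\curl \vv)\rangle_\Sigma$; on the right, the variational definition~\eqref{eq:def_scurl} of $\scurl$ applied with the particular extension $\rinvdtrace \vdtrace \vv := \vv|_Q$, combined with $\diver \curl = 0$ and Green's identity in $Q$, yields the same expression. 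Convolving with $G_\alpha$ and reusing Proposition~\ref{prop:properties_convolution_distribution} turns this into $\curl \widetilde{V}(\varphi \vn) = \widetilde{V}(\scurl \varphi)$ in $Q$, so that the contribution of $\vg_1$ becomes
\begin{equation*}
  -\langle \nvecm \cdot \vg_1|_{\Sigma_m}, \psi_m\rangle_{\Sigma_m}
    = \alpha^2 \langle \nvecm \cdot \dtracem\bigl(\curl \widetilde{V}(\scurl \varphi)\bigr), \psi_m\rangle_{\Sigma_m}.
\end{equation*}

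To finish I would invoke the spatial Stokes identity on the closed Lipschitz surface $\Gamma_m$,
\begin{equation*}
  \int_{\Gamma_m} \psi \, \nvecm \cdot \curl \vw \, \dif \vs
    = \int_{\Gamma_m} \scurlm \psi \cdot \vw \, \dif \vs
    \quad \text{for smooth } \psi,\vw,
\end{equation*}
which follows from the product rule $\nvecm \cdot \curl(\psi \vw) = \psi \nvecm \cdot \curl \vw + \scurlm \psi \cdot \vw$ together with the vanishing of $\int_{\Gamma_m} \scurlm h\, \dif \vs$ on a closed surface (itself a consequence of $\int_{\Gamma_m} \vn \cdot \curl(h \ve_i)\, \dif \vs = 0$). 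After integrating in $t$ and taking $\vw = \widetilde{V}(\scurl \varphi)$ restricted to $\overline{Q_m}$, this produces the first summand of~\eqref{eq:interior_ibp_formula} for $\psi_m \in \dtracem(C^\infty_c(\mathbb{R}^3 \times (0,T)))$; density of such smooth traces in $H^{1/2,1/4}(\Sigma_m)$ (Proposition~\ref{prop:density_anisotropic_Q} transferred to $Q_m$) together with continuity of $\scurlm$ (Proposition~\ref{prop:surface_curl}) extends the formula to arbitrary $\psi_m \in H^{1/2,1/4}(\Sigma_m)$. The main obstacle I anticipate is the rigorous justification of the key distributional identity, because the variational definition of $\scurl$ goes through an arbitrary $H^{1,1/2}$-extension and one has to be sure that picking the distinguished extension $\vv|_Q$ is admissible and that the resulting integration-by-parts in $Q$ is licit at the required regularity; once this is in place, the remaining steps are calculations with smooth data followed by standard density arguments.
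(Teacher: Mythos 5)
Your proof is correct and reaches the same conclusion, but it organizes the treatment of $\vg_1$ differently from the paper. The paper stays entirely in the distributional calculus: it writes $-\langle \nvecm\cdot\dtracem\vg_1, \psi_m\rangle_{\Sigma_m} = -(\dtracem)'(\psi_m\nvecm)[\vg_1]$, pulls one $\curl$ across the $\mathcal{E}'(Q)\times C^\infty(Q)$ duality pairing, and then applies Lemma~\ref{lem:curl_and_scurl} \emph{twice} — once on $\Sigma$ (to turn $\curl (\vdtrace)'(\varphi\vn)$ into $(\dtrace)'(\scurl\varphi)$) and once on $\Sigma_m$ (to turn $\curl(\dtracem)'(\psi_m\nvecm)$ into $(\dtracem)'(\scurlm\psi_m)$) — so no density step is needed in $\psi_m$. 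You instead use Lemma~\ref{lem:curl_and_scurl} only on the $\varphi$ side to obtain the pointwise identity $\curl\widetilde V(\varphi\vn) = \widetilde V(\scurl\varphi)$ in $Q$, and then handle the $\psi_m$ side with a classical Stokes-type surface identity on $\Gamma_m$ plus a density argument in $\psi_m$. Both are valid; the paper's symmetric distributional treatment is a bit shorter, while yours is more concrete and elementary once the key identity on the $\varphi$ side is established. Two small remarks: your stated product rule has the wrong sign — one has $\nvecm\cdot\curl(\psi\vw) = \psi\,\nvecm\cdot\curl\vw - \scurlm\psi\cdot\vw$ because $\nvecm\cdot(\nabla\psi\times\vw) = -\vw\cdot(\nabla\psi\times\nvecm) = -\vw\cdot\scurlm\psi$; the Stokes identity you then deduce is nevertheless the correct one, so this is just a typo. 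And the concern you flag about using the distinguished extension $\vv|_Q$ in the variational definition of $\scurl$ is exactly the issue settled by the paper's Proposition~\ref{prop:surface_curl} (independence of extension), so that part is fine; the paper's own proof of Lemma~\ref{lem:curl_and_scurl} actually proceeds the other way around, first for smooth $\varphi$ via the product rule and divergence theorem, then by density in $\varphi$.
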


Before proving Proposition~\ref{prop:interior_ibp_formula} we consider two lemmata. Keeping in mind the decomposition~\eqref{eq:representation_grad_dlp_short}, we focus first on $\vg_1$ in \eqref{eq:def_g1}. In Lemma~\ref{lem:first_part_interior_ibp_formula} we show that it is related to the first term on the right-hand side of~\eqref{eq:interior_ibp_formula}. For this purpose we have to draw a connection between $\curl (\dtrace)'(\varphi \vn)$ and the surface curl of $\varphi$ defined in \eqref{eq:def_scurl}. This is done in Lemma~\ref{lem:curl_and_scurl} whose proof is given in Section~\ref{sec:appendix_proofs}. A similar result for purely spatial curls is proven in \cite[cf.~Lemma~3.3.21]{SauSch2010}.

\begin{lemma} \label{lem:curl_and_scurl}
  Let $\varphi \in H^{1/2,1/4}(\Sigma)$. Then 
  \begin{equation} \label{eq:curl_and_scurl}
    \curl ( \dtrace )'(\varphi \vn ) = (\dtrace)' (\scurl \varphi)
  \end{equation}
  in $\mathcal{D}'(\mathbb{R}^3 \times \mathbb{R})$. 
\end{lemma}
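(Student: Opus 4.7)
My strategy is to test both sides of \eqref{eq:curl_and_scurl} against an arbitrary vector-valued test function $\vw \in \mathbf{C}^\infty_c(\mathbb{R}^3 \times \mathbb{R})$ and to unwind the definitions of the distributional curl, the adjoint trace, and the surface curl, connecting the two resulting expressions by a spatial Green identity in $Q$.

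First, the distributional curl of a vector-valued distribution $\vS$ satisfies $(\curl \vS)[\vw] = \vS[\curl \vw]$, because $\int \curl \vu \cdot \vv\,\dif\vx = \int \vu \cdot \curl \vv\,\dif\vx$ for smooth compactly supported $\vu,\vv$. Combining this with the componentwise use of \eqref{eq:adjoint_dtrace} I obtain
\begin{equation*}
  (\curl (\dtrace)'(\varphi \vn))[\vw]
    = (\dtrace)'(\varphi \vn)[\curl \vw]
    = \langle \varphi \vn, \dtrace (\curl \vw) \rangle_\Sigma.
\end{equation*}
Next, I rewrite the boundary pairing on the right as a volume integral over $Q$. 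For any continuous extension $U \in H^{1,1/2}(Q)$ with $\dtrace U = \varphi$ (for instance $U := \rinvdtrace \varphi$), Green's first identity applied slicewise in $\Omega$ together with $\diver \curl \vw = 0$ gives
\begin{equation*}
  \langle \nabla U, \curl \vw \rangle_{\mathbf{L}^2(Q)}
    = \int_0^T \int_\Gamma U\,(\curl \vw \cdot \vn)\,\dif\vs_\vx\,\dif t
    = \langle \varphi \vn, \dtrace (\curl \vw) \rangle_\Sigma,
\end{equation*}
which I would establish first for smooth $\varphi$ (where $U$ may be taken smooth on $\overline{Q}$) and then extend to all $\varphi \in H^{1/2,1/4}(\Sigma)$ by the density result of Proposition~\ref{prop:density_anisotropic_Q} and the continuity of $\rinvdtrace$ and $\dtrace$.

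Finally, Proposition~\ref{prop:surface_curl} guarantees that the right-hand side of \eqref{eq:def_scurl} does not depend on the particular $H^{1,1/2}(Q)$-extension chosen, so I may use $\vw|_Q$ as the extension of $\dtrace \vw$ and apply \eqref{eq:def_scurl} with $U = \rinvdtrace \varphi$ to obtain
\begin{equation*}
  (\dtrace)'(\scurl \varphi)[\vw]
    = \langle \scurl \varphi, \dtrace \vw \rangle_\Sigma
    = \langle \nabla \rinvdtrace \varphi, \curl \vw \rangle_{\mathbf{L}^2(Q)},
\end{equation*}
which matches the left-hand side of the previous display. Chaining the three equalities yields $(\curl (\dtrace)'(\varphi \vn))[\vw] = (\dtrace)'(\scurl \varphi)[\vw]$ for every $\vw \in \mathbf{C}^\infty_c(\mathbb{R}^3\times\mathbb{R})$, and hence \eqref{eq:curl_and_scurl} holds in $\mathcal{D}'(\mathbb{R}^3 \times \mathbb{R})$.

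\textbf{Main obstacle.} The only nontrivial point is the rigorous justification of the spatial Green identity
\begin{equation*}
  \int_0^T\!\int_\Omega \nabla U \cdot \curl \vw \,\dif\vx\,\dif t
    = \int_0^T\!\int_\Gamma (\dtrace U)\,(\curl \vw \cdot \vn)\,\dif\vs_\vx\,\dif t
\end{equation*}
for $U \in H^{1,1/2}(Q)$ and $\vw \in \mathbf{C}^\infty_c(\mathbb{R}^3\times\mathbb{R})$. For smooth $U$ on $\overline{Q}$ this is immediate from the divergence theorem applied for each fixed $t \in (0,T)$, combined with the vanishing of $\diver \curl \vw$; for general $U$ one approximates by smooth functions using Proposition~\ref{prop:density_anisotropic_Q} and invokes the continuity of $\dtrace:H^{1,1/2}(Q) \to H^{1/2,1/4}(\Sigma)$ from Theorem~\ref{thm:dtrace}. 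Once this identification is available, the remainder of the argument is a purely notational chain of definitions.
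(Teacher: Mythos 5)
Your proof is correct, but it takes a different route from the paper's. The paper argues purely on the boundary $\Sigma$: for $\varphi=\widetilde{\varphi}|_\Sigma$ with $\widetilde{\varphi}\in C^\infty_c(\overline{\Omega}\times(0,T))$ it uses the product rule $\varphi\,\curl\vw=\curl(\widetilde{\varphi}\vw)-\nabla\widetilde{\varphi}\times\vw$, kills the first term via the divergence theorem (since $\diver\curl=0$), rewrites the second via the scalar triple product and the explicit classical formula $\scurl\varphi=\nabla\widetilde{\varphi}\times\vn$ from Proposition~\ref{prop:surface_curl}, and only then passes to general $\varphi$ by density and the distributional continuity of both sides. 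You instead never invoke the smooth-function formula \eqref{eq:scurl_smooth_functions} at all: you identify both sides of \eqref{eq:curl_and_scurl}, tested against $\vw$, with the single volume quantity $\langle\nabla\rinvdtrace\varphi,\curl\vw\rangle_{\mathbf{L}^2(Q)}$, using the weak definition \eqref{eq:def_scurl} of $\scurl$ together with the extension-independence established in Proposition~\ref{prop:surface_curl} on one side, and a Green/divergence identity (with the density argument isolated there, applied to the extension $U\in H^{1,1/2}(Q)$ rather than to $\varphi$) on the other. This is a cleaner factorization: it exploits the abstract definition of $\scurl$ rather than re-deriving a boundary identity from scratch, and it makes the role of extension-independence explicit. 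One small clarification for precision: the density step is most naturally phrased as an approximation of $U$ in $H^{1,1/2}(Q)$ via Proposition~\ref{prop:density_anisotropic_Q}, using that both $U\mapsto\langle\nabla U,\curl\vw\rangle_{\mathbf{L}^2(Q)}$ and $U\mapsto\langle\dtrace U,\,\curl\vw\cdot\vn\rangle_{L^2(\Sigma)}$ are continuous on $H^{1,1/2}(Q)$; once that identity is known for all such $U$, substituting $U=\rinvdtrace\varphi$ handles every $\varphi\in H^{1/2,1/4}(\Sigma)$ at once, so there is no need to treat smooth $\varphi$ separately as your wording suggests.
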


\begin{lemma} \label{lem:first_part_interior_ibp_formula}
  Let $\Sigma_m$ be given as in Proposition~\ref{prop:interior_ibp_formula}. Let $\varphi \in H^{1/2,1/4}(\Sigma)$, $\psi_m \in H^{1/2,1/4}(\Sigma_m)$ and~$\vg_1$ be defined by \eqref{eq:def_g1}. Then
  \begin{equation} \label{eq:first_part_interior_ibp_formula}
    -\langle \nvecm\cdot \dtracem \vg_1, \psi_m \rangle_{\Sigma_m} 
    = \alpha^2 \langle \scurlm \psi_m, \dtracem \widetilde{V}(\curl_{\Sigma} \varphi) \rangle_{\Sigma_m}.
  \end{equation}
\end{lemma}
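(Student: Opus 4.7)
The plan is to rewrite $\vg_1$ on $Q$ as $-\alpha^2 \curl$ of a classically smooth vector field, and then apply a purely spatial integration by parts identity on $\Omega_m$ together with the definition of $\scurlm$ to match the two sides of \eqref{eq:first_part_interior_ibp_formula}.

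First, I would simplify $\vg_1$ using Lemma~\ref{lem:curl_and_scurl}. Applying it with $\alpha\varphi$ in place of $\varphi$ gives $\curl (\vdtrace)'(\alpha\varphi\vn) = \alpha (\dtrace)'(\scurl \varphi)$ componentwise in $\mathcal{D}'(\mathbb{R}^3\times\mathbb{R})$. Using that differentiation and convolution with $G_\alpha$ commute (Proposition~\ref{prop:properties_convolution_distribution}) and the componentwise definition of $\widetilde V$, one obtains
\[
  \vg_1 \;=\; -\alpha^2\, \curl \widetilde V (\scurl \varphi) \qquad \text{in } Q,
\]
where $\widetilde V$ is applied componentwise to $\scurl \varphi \in \mathbf{H}^{-1/2,-1/4}(\Sigma)$. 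By Proposition~\ref{prop:distributional_representation_grad_dlp} this identity holds classically on $Q$, so $\vF := \widetilde V(\scurl \varphi)$ is in $\mathbf{C}^\infty(Q)$ and in particular smooth on the closed set $\overline{Q_m}\subset Q$, which will be the crucial regularity below.

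Second, I would establish the surface identity
\[
  \langle \nvecm \cdot \dtracem (\curl \vF), \psi_m \rangle_{\Sigma_m}
  \;=\; \langle \scurlm \psi_m , \dtracem \vF \rangle_{\Sigma_m}
\]
for any $\vF \in \mathbf{C}^\infty(\overline{Q_m})$ and $\psi_m \in H^{1/2,1/4}(\Sigma_m)$. Pick an extension $\tilde\psi_m \in H^{1,1/2}(Q_m)$ of $\psi_m$ (existing by the $\Omega_m$-analog of Corollary~\ref{cor:right_inverse_dtrace}). Smoothness of $\curl \vF$ turns the left-hand duality pairing into an ordinary $L^2(\Sigma_m)$ integral; for a.e.\ $t \in (0,T)$, Green's identity on $\Omega_m$ applied at fixed time together with $\diver \curl = 0$ yields
\[
  \int_{\Gamma_m} (\nvecm \cdot \curl \vF)\, \tilde\psi_m \,\dif\vs
  \;=\; \int_{\Omega_m} \nabla \tilde\psi_m \cdot \curl \vF \,\dif \vx .
\]
Integrating in $t$ gives $\langle \nvecm \cdot \dtracem(\curl \vF), \psi_m \rangle_{\Sigma_m} = \langle \nabla \tilde\psi_m, \curl \vF \rangle_{\mathbf{L}^2(Q_m)}$. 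For the right-hand side, the $\Sigma_m$-analog of Proposition~\ref{prop:surface_curl} ensures that $\scurlm$ is independent of the extensions chosen, so one may pick $\vF$ itself as an extension of $\dtracem \vF$ (legitimate since $\vF \in \mathbf{H}^{1,1/2}(Q_m)$) and $\tilde\psi_m$ as an extension of $\psi_m$, producing the same quantity $\langle \nabla \tilde\psi_m, \curl \vF \rangle_{\mathbf{L}^2(Q_m)}$.

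Combining the two steps with $\vF = \widetilde V(\scurl \varphi)$ and multiplying by $\alpha^2$ then yields \eqref{eq:first_part_interior_ibp_formula} directly from $-\langle \nvecm \cdot \dtracem \vg_1, \psi_m \rangle_{\Sigma_m} = \alpha^2 \langle \nvecm \cdot \dtracem (\curl \vF), \psi_m \rangle_{\Sigma_m}$. The main obstacle I expect is the careful distributional bookkeeping in the first step: one must justify pulling $\curl$ outside of the convolution with $G_\alpha$ on $\mathbb{R}^3 \times \mathbb{R}$, handle the componentwise interpretation of $(\dtrace)'$ and $\widetilde V$ on vector-valued arguments, and then identify the resulting object on $Q$ with the classical curl of the $\mathbf{C}^\infty(Q)$ function $\widetilde V(\scurl \varphi)$. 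Once that identification is in place, the spatial integration by parts is essentially classical thanks to the interior smoothness of $\vF$ on $\overline{Q_m}$.
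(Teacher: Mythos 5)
Your proposal is correct and reaches the same conclusion, but it takes a genuinely different route from the paper's proof. The paper keeps the entire argument at the distributional level: it interprets the left-hand side as the action of $(\dtracem)'(\psi_m \nvecm) \in \boldsymbol{\mathcal{E}}'(Q)$ on $\vg_1 \in \mathbf{C}^\infty(Q)$, pulls the outer $\curl$ of $\vg_1$ across this duality (distributional integration by parts), and then invokes Lemma~\ref{lem:curl_and_scurl} \emph{twice}---once on $\Sigma$ to turn $\curl(\vdtrace)'(\varphi\vn)$ into $(\dtrace)'(\scurl\varphi)$, and once on $\Sigma_m$ to turn $\curl(\dtracem)'(\psi_m\nvecm)$ into $(\dtracem)'(\scurlm\psi_m)$. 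You instead use Lemma~\ref{lem:curl_and_scurl} only once, to obtain the clean intermediate identity $\vg_1 = -\alpha^2\curl\widetilde V(\scurl\varphi)$ in $Q$, and then handle the $\Sigma_m$ side directly via a classical, fiberwise divergence theorem on $\Omega_m$ combined with the extension-independence of $\scurlm$ (the $\Sigma_m$-analog of Proposition~\ref{prop:surface_curl}). Your step~2 is essentially an explicit unwinding of what the paper achieves implicitly with the second application of Lemma~\ref{lem:curl_and_scurl}. The trade-off: the paper's argument is more uniform and symmetric, while yours is more elementary in the second half and exhibits the intermediate formula $\vg_1 = -\alpha^2\curl\widetilde V(\scurl\varphi)$ explicitly, which is instructive.

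One small imprecision: you write that $\vF = \widetilde V(\scurl\varphi)$ is ``smooth on the closed set $\overline{Q_m}\subset Q$,'' but $\overline{Q_m} = \overline{\Omega}_m\times[0,T]\not\subset Q = \Omega\times(0,T)$, since $Q$ is open in the time variable. What you actually need and have is $\vF\in\mathbf{C}^\infty(\overline{\Omega}_m\times(0,T))$; the fiberwise divergence theorem is applied for a.e.\ fixed $t\in(0,T)$, and the resulting time integral is finite because $\curl\vF\in\mathbf{L}^2(Q)$ (as $\vF\in\mathbf{H}^{1,1/2}_{;0,}(Q)$). This does not affect the validity of the proof, but the phrasing should be corrected.
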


\begin{proof}
  Since $\vg_1 \in \mathbf{C}^\infty(Q)$ we can interpret 
  \begin{equation*}
    -\langle \nvecm\cdot  \dtracem \vg_1, \psi_m \rangle_{\Sigma_m} 
    = -\int_0^T \int_{\Gamma_m} \psi_m(\vx,t) \nvecm(\vx) \cdot \vg_1(\vx,t)\,\dif\vs_\vx\,\dif t
    = -(\dtracem)'(\psi_m \nvecm) [ \vg_1 ],
  \end{equation*}
  with $(\dtracem)'(\psi_m \nvecm) \in \boldsymbol{\mathcal{E}}'(Q)$. By \eqref{eq:def_g1} we have
  \begin{equation*}
    \vg_1 = -\left( \alpha \curl \curl (\vdtrace)'(\alpha \varphi \vn )\right) \ast G_\alpha
      = -\alpha^2 \curl \left( \left( \curl (\vdtrace)'(\varphi \vn)\right) \ast G_\alpha \right)
  \end{equation*}
  and we see that 
  \begin{align*}
    - (\dtracem)'(\psi_m \nvecm) [ \vg_1 ]
      &= \alpha^2 (\dtracem)'(\psi_m \nvecm) 
        \left[ \curl \left( \left( \curl (\vdtrace)'(\varphi \vn)\right) \ast G_\alpha \right) \right] \\
      &= \alpha^2  \curl (\dtracem)'(\psi_m \nvecm)
        \left[\left( \curl (\vdtrace)'(\varphi \vn) \right) \ast G_\alpha \right],
  \end{align*}
  i.e.~we can integrate by parts in a distributional sense in the duality pairing of $\mathcal{E}'(Q)$ and $C^\infty(Q)$. The identity \eqref{eq:curl_and_scurl}, which obviously still holds if $\Sigma$ is replaced by $\Sigma_m$, combined with the previous equations yields
  \begin{align*}
    -\langle \nvecm\cdot  \dtracem \vg_1, \psi_m \rangle_{\Sigma_m} 
      &= \alpha^2 ( \dtracem )' ( \scurlm \psi_m ) 
        \left[ \left( (\dtrace)' (\scurl \varphi) \right) \ast G_\alpha \right]\\
      &= \alpha^2 ( \dtracem )' ( \scurlm \psi_m ) \left[ \widetilde{V} (\scurl \varphi) \right] \\
      &= \alpha^2 \langle \scurlm \psi_m, \dtracem \widetilde{V} (\scurl \varphi) \rangle_{\Sigma_m},
  \end{align*}
  where we used the definition of $\widetilde{V}$ in \eqref{eq:def_slp} in the second line and understand its application here in a componentwise way.
\end{proof}

\begin{proof}[Proof of Proposition~\ref{prop:interior_ibp_formula}]  
  We have seen in \eqref{eq:representation_grad_dlp_short} that the scaled gradient $\alpha \nabla u$ can be written as the sum of the functions $\vg_1, \vg_2 \in C^\infty(Q)$ given in \eqref{eq:def_g1} and \eqref{eq:def_g2}. As a consequence, the scaled Neumann trace $\alpha \ntracem u$ on the boundary $\Sigma_m$ inside of $Q$ is simply given by 
  \begin{equation*}
    \alpha \ntracem u = \alpha \nvecm \cdot \dtracem(\nabla u) = \nvecm \cdot \dtracem \vg_1  + \nvecm \cdot \dtracem \vg_2
  \end{equation*}
  and hence
  \begin{equation*}
    -\alpha\,\langle \ntracem u, \psi_m \rangle_{\Sigma_m} 
      = -\langle \nvecm \cdot \dtracem \vg_1, \psi_m \rangle_{\Sigma_m} 
        -\langle \nvecm \cdot \dtracem \vg_2, \psi_m \rangle_{\Sigma_m}
  \end{equation*}
  for all $\psi_m \in H^{1/2,1/4}(\Sigma_m)$. By Equation~\eqref{eq:first_part_interior_ibp_formula} the first term on the right-hand side of this equation coincides with the first term on the right-hand side of \eqref{eq:interior_ibp_formula}. Since 
  \begin{equation*}
    \vg_2 = -\left( \frac{\partial}{\partial t} (\vdtrace)'(\alpha \varphi \vn)\right) \ast G_\alpha
      = - \alpha \frac{\partial}{\partial t} \left( ( (\vdtrace)'(\varphi \vn) ) \ast G_\alpha \right)
      = - \alpha \frac{\partial}{\partial t} \widetilde{V}(\varphi \vn),
  \end{equation*} 
  where $\widetilde{V}$ from $\eqref{eq:def_slp}$ is applied componentwise again, the equality of the other terms follows immediately.
\end{proof}

\subsubsection*{The integration by parts formula \eqref{eq:general_ibp_formula} as limit case of \eqref{eq:interior_ibp_formula}}

Proposition~\ref{prop:interior_ibp_formula} provides us with an integration by parts formula \eqref{eq:interior_ibp_formula} on artificial boundaries~$\Sigma_m$ inside of $Q$. The final step in the proof of Theorem~\ref{thm:general_ibp_formula} is to deduce the actual integration by parts formula \eqref{eq:general_ibp_formula} therefrom. For this purpose, we consider a sequence $\{\Omega_m\}_m$ of smooth domains approximating $\Omega$ as established by Theorem~\ref{thm:approximating_domains} and denote $Q_m := \Omega_m \times (0,T)$ and $\Sigma_m := \Gamma_m \times (0,T)$ as before. We will refer to $\{\Sigma_m\}_m$ in the following as a \emph{smooth approximating sequence} of $\Sigma$. Let $\varphi, \psi \in H^{1/2,1/4}(\Sigma)$ and define $\psi_m \in H^{1/2,1/4}(\Sigma_m)$ to be the restriction of the extension $\rinvdtraceend \psi \in H^{1,1/2}_{;,0}(Q)$ of $\psi$ to~$\Sigma_m$, i.e.
\begin{equation} \label{eq:def_psi_m}
  \psi_m := \dtracem ( \rinvdtraceend \psi ).
\end{equation}
Then, the left-hand side and the first term on the right-hand side of \eqref{eq:interior_ibp_formula} converge to the respective terms of \eqref{eq:general_ibp_formula} in the limit as $m$ tends to infinity. This is the content of the next two lemmata. For the convergence of the remaining term, which is handled in Lemma~\ref{lem:convergence_time_bf}, additional assumptions on $\psi$ are required.

\begin{lemma} \label{lem:convergence_bf_ntrace}
  Let $\{\Sigma_m\}_m$ be a smooth approximating sequence of $\Sigma$ as introduced at the beginning of the paragraph.
  Let $\varphi, \psi \in H^{1/2,1/4}(\Sigma)$, $u = W\varphi$, and $\psi_m$ be defined by \eqref{eq:def_psi_m}. Then
  \begin{equation}
    \lim_{m\rightarrow\infty}  -\alpha\,\langle \ntracem u, \psi_m \rangle_{\Sigma_m} 
    = \langle D \varphi, \psi \rangle_\Sigma.
  \end{equation} 
\end{lemma}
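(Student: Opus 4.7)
The plan is to rewrite both sides of the desired limit through the variational definition~\eqref{eq:def_ntrace} of the Neumann trace, exploit that $u = W\varphi$ solves the homogeneous heat equation classically in $Q$ so that the volume term involving $(\partial_t - \alpha\upDelta)u$ simply drops out, and then pass to the limit in the two remaining volume contributions. Concretely, since $D = -\alpha\,\ntrace W$, it suffices to show that $\langle \ntracem u, \psi_m\rangle_{\Sigma_m} \to \langle \ntrace u, \psi\rangle_\Sigma$.

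For the first step I would verify that $u|_{Q_m}$ belongs to $H^{1,1/2}_{;0,}(Q_m, \partial/\partial t - \alpha\upDelta)$ with vanishing heat operator (using $u \in C^\infty(Q)$ together with $(\partial_t - \alpha\upDelta)u = 0$ pointwise in $Q$) and that $\rinvdtraceend\psi|_{Q_m} \in H^{1,1/2}_{;,0}(Q_m)$ is an admissible right-inverse lift of $\psi_m$ by construction. Applying the $Q_m$-analog of~\eqref{eq:def_ntrace} then yields
\[
\langle \ntracem u, \psi_m\rangle_{\Sigma_m} = \int_{Q_m} \nabla u \cdot \nabla(\rinvdtraceend \psi)\,\dif\vx\,\dif t + d_{Q_m}(u, \rinvdtraceend\psi),
\]
where $d_{Q_m}$ is the bilinear form of Proposition~\ref{prop:bilinear_form_d} associated with $Q_m$, and likewise for $\langle \ntrace u, \psi\rangle_\Sigma$ with $Q$ in place of $Q_m$. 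The gradient integral converges by Lebesgue's dominated convergence theorem, since $\nabla u \cdot \nabla(\rinvdtraceend\psi) \in L^1(Q)$ and $\chi_{Q_m} \to \chi_Q$ pointwise almost everywhere thanks to the domain-approximation properties of $\{\Omega_m\}_m$ guaranteed by Theorem~\ref{thm:approximating_domains}.

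The main obstacle is the convergence $d_{Q_m}(u, \rinvdtraceend\psi) \to d(u, \rinvdtraceend\psi)$: since $d$ is defined only as a continuous extension rather than as an absolutely convergent integral, a direct dominated-convergence argument is unavailable. I would handle this via a three-$\varepsilon$ argument. Using Proposition~\ref{prop:density_anisotropic_Q}, take smooth approximants $u^{(n)} \to u$ in $H^{1,1/2}_{;0,}(Q)$ and $v^{(n)} \to \rinvdtraceend\psi$ in $H^{1,1/2}_{;,0}(Q)$. For smooth arguments the integral representation~\eqref{eq:def_bilinear_form_d} applies, and for each fixed $n$ the integrand $\partial_t u^{(n)} \cdot v^{(n)}$ lies in $L^1(Q)$, so $d_{Q_m}(u^{(n)}, v^{(n)}) \to d(u^{(n)}, v^{(n)})$ again by dominated convergence. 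To close the argument, the remainders must be controlled uniformly in $m$; this is exactly where the domain-independence of the continuity constant $c_d$ in Proposition~\ref{prop:bilinear_form_d} is decisive. Combined with the fact that restriction to $Q_m$ does not increase the anisotropic norms, it yields $|d_{Q_m}(u - u^{(n)}, \rinvdtraceend\psi)| + |d_{Q_m}(u^{(n)}, \rinvdtraceend\psi - v^{(n)})| \to 0$ as $n\to\infty$ uniformly in $m$, completing the argument.
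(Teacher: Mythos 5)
Your proposal is correct and follows essentially the same route as the paper: rewrite both pairings via the variational definition~\eqref{eq:def_ntrace} of the Neumann trace (the heat-operator term dropping out since $u=W\varphi$), control the gradient contribution through $|\Omega\setminus\Omega_m|\to 0$, and handle $d_{Q_m}(u,\rinvdtraceend\psi)\to d(u,\rinvdtraceend\psi)$ by a three-term splitting with smooth approximants, using the explicit integral formula for the middle term and the $\Omega$-independence of the constant $c_d$ together with the monotonicity of the restricted norms for the other two. The only cosmetic differences are that you invoke dominated convergence where the paper simply quotes $|\Omega\setminus\Omega_m|\to 0$, and you split the bilinear-form difference in the first slot rather than symmetrically, both of which are inessential.
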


\begin{proof}
  By the definition of $\ntrace$ via Green's first identity in \eqref{eq:def_ntrace} there holds
  \begin{equation*}
    \langle D \varphi, \psi \rangle_\Sigma = -\alpha \langle \ntrace u, \psi \rangle_\Sigma
      = -\alpha\left( \int_0^T \int_\Omega \nabla u \cdot \nabla (\rinvdtraceend \psi) \,\dif\vx\,\dif t 
        + d(u, \rinvdtraceend \psi) \right),
  \end{equation*}
  where we use that $(\partial/\partial t - \alpha \upDelta) u = 0$, since $u = W \varphi$. The Neumann trace on $\Sigma_m$ is defined analogously and thus 
  \begin{equation*}
    -\alpha\,\langle \ntracem u, \psi_m \rangle_{\Sigma_m} 
      = -\alpha\left( \int_0^T \int_{\Omega_m} \nabla u \cdot \nabla (\rinvdtraceend \psi) \,\dif\vx\,\dif t 
      + d_{Q_m}(u, \rinvdtraceend \psi) \right),
  \end{equation*}
  where we use that $(\rinvdtraceend \psi)|_{Q_m}$ is an extension of $\psi_m$ into $H^{1,1/2}_{;,0}(Q_m)$ and the bilinear form $d_{Q_m}: H^{1,1/2}_{;0,}(Q_m) \times H^{1,1/2}_{;,0}(Q_m) \rightarrow \mathbb{R}$ is defined in the same way as the bilinear form $d$ in \eqref{eq:def_bilinear_form_d} replacing only $\Omega$ by $\Omega_m$. Note that we identify here and in the following $u$ and $\rinvdtraceend \psi$ with the restrictions $u|_{Q_m}$ and $(\rinvdtraceend \psi)|_{Q_m}$, respectively, when operating on $Q_m$ to simplify the notation. By subtracting the second equation from  the first it follows that
  \begin{equation} \label{eq:estimate_difference_bf_ntrace}
    \begin{split}
      \Big| \langle D \varphi, \psi \rangle_\Sigma &+ \alpha\,\langle \ntracem u, \psi_m \rangle_{\Sigma_m}  \Big| \\
      &\leq \alpha \left| \int_0^T \int_{\Omega \backslash \Omega_m}  
        \nabla u \cdot \nabla (\rinvdtraceend \psi) \,\dif\vx\,\dif t \, \right| 
      + \alpha \left| d(u, \rinvdtraceend \psi) - d_{Q_m}(u, \rinvdtraceend \psi) \right|.
    \end{split}
  \end{equation}
  The first term on the right-hand side converges to 0 as $m \rightarrow \infty$, since $u$, $\rinvdtraceend \psi \in H^{1,1/2}(Q)$ implies $\nabla u$, $\nabla \rinvdtraceend \psi \in \mathbf{L}^2(Q)$ and $|\Omega \backslash \Omega_m| \rightarrow 0$.

  It is slightly more difficult to see that the second term in \eqref{eq:estimate_difference_bf_ntrace} converges to zero. The problem is that $d$ and $d_{Q_m}$ are defined only as continuous extensions of \eqref{eq:def_bilinear_form_d} for general functions in $H^{1,1/2}_{;0,}(Q)$ and $H^{1,1/2}_{;,0}(Q)$. Therefore, let $\{u_n\}_n$ be a sequence of functions in $C^\infty_c(\overline{\Omega}\times(0,T])$  converging to $u$ in $H^{1,1/2}_{;0,}(Q)$ and $\{v_n\}_n$ be a sequence in $C^\infty_c(\overline{\Omega}\times[0,T))$ converging to $v := \rinvdtraceend \psi$ in~$H^{1,1/2}_{;,0}(Q)$. The restrictions $u_n|_{Q_m} \in C^\infty_c(\overline{\Omega}_m\times(0,T])$ and $v_n|_{\Omega_m} \in C^\infty_c(\overline{\Omega}_m\times[0,T))$ of these functions converge to $u|_{Q_m}$ in $H^{1,1/2}_{;0,}(Q_m)$ and $v|_{Q_m}$ in $H^{1,1/2}_{;,0}(Q_m)$, respectively. In particular, it follows that
  \begin{equation*}
    \lim_{n\rightarrow\infty} d(u_n,v_n) = d(u, v), \qquad
    \lim_{n\rightarrow\infty} d_{Q_m}(u_n,v_n) = d_{Q_m}(u, v),
  \end{equation*}
  by the continuity of $d_Q$ and $d_{Q_m}$ stated in Proposition~\ref{prop:bilinear_form_d}. This motivates us to estimate
  \begin{equation} \label{eq:estimate_difference_d}
    \begin{split}
      | d(u,v) - d_{Q_m}(u,v) | \leq\; &| d(u,v) - d(u_n,v_n) |+ | d(u_n,v_n) - d_{Q_m}(u_n,v_n) | \\
        &+ |  d_{Q_m}(u_n,v_n) - d_{Q_m}(u,v) |
    \end{split}
  \end{equation}
  and to show that the right-hand side can be bounded by an arbitrarily small $\varepsilon$ for a suitably chosen~$n$ and sufficiently large $m$.
  
  Let $\varepsilon > 0$ be fixed. The last summand in \eqref{eq:estimate_difference_d} can be estimated by 
  \begin{align*}
    |  d_{Q_m}(&u_n,v_n) - d_{Q_m}(u,v) | 
      \leq |d_{Q_m}(u_n,v_n) - d_{Q_m}(u,v_n)| + |d_{Q_m}(u,v_n) - d_{Q_m}(u,v)| \\
      &\leq c_d \|u-u_n\|_{H^{1,1/2}_{;0,}(Q_m)} \|v_n\|_{H^{1,1/2}_{;,0}(Q_m)}
        + c_d \|u\|_{H^{1,1/2}_{;0,}(Q_m)} \|v - v_n\|_{H^{1,1/2}_{;,0}(Q_m)} \\
      &\leq c_d \left( \sup_{n \in \mathbb{N}}\Big\{\|v_n\|_{H^{1,1/2}_{;,0}(Q)}\Big\} + \|u\|_{H^{1,1/2}_{;0,}(Q)} \right) 
        \left( \|u-u_n\|_{H^{1,1/2}_{;0,}(Q)} + \|v - v_n\|_{H^{1,1/2}_{;,0}(Q)} \right).
  \end{align*}
  Here we used that the Sobolev norms of functions restricted to $Q_m$ can be estimated by the respective norms on $Q$ and that the bilinear forms $d_{Q_m}$ can be bounded by a constant $c_d$ independent of the domains $\Omega_m$, as stated in Proposition~\ref{prop:bilinear_form_d}. Due to the convergence of $u_n$ to~$u$ and $v_n$ to~$v$ there exists an~$n(\varepsilon)$ independent of $m$ such that for all $n > n(\varepsilon)$ 
  \begin{equation*}
    \max\Big\{\|u-u_n\|_{H^{1,1/2}_{;0,}(Q)}, \|v - v_n\|_{H^{1,1/2}_{;,0}(Q)} \Big\} < \frac{\varepsilon}
      {6 c_d} \left( \sup_{n \in \mathbb{N}}\Big\{\|v_n\|_{H^{1,1/2}_{;,0}(Q)}\Big\} + \|u\|_{H^{1,1/2}_{;0,}(Q)} \right)^{-1}.
  \end{equation*}
  For all such $n$ we conclude that 
  \begin{equation*}
    |  d_{Q_m}(u_n,v_n) - d_{Q_m}(u,v) | < \frac{\varepsilon}{3}.
  \end{equation*}
  By repeating the same arguments for the first term in \eqref{eq:estimate_difference_d} it follows that
  \begin{equation*}
    | d(u,v) - d(u_n,v_n) | < \frac{\varepsilon}{3}
  \end{equation*} 
  for all $n > n(\varepsilon)$, where $n(\varepsilon)$ can be chosen to be the same for both terms.

  The second summand in \eqref{eq:estimate_difference_d} is given by
  \begin{align*}
    | d(u_n,v_n) - d_{Q_m}(u_n,v_n) | &= 
      \left| \int_0^T\int_\Omega \frac{\partial u_n}{\partial t}(\vx,t) v_n(\vx,t) \,\dif\vx\,\dif t
      - \int_0^T\int_{\Omega_m} \frac{\partial u_n}{\partial t}(\vx,t) v_n(\vx,t) \,\dif\vx\,\dif t \right| \\
      &=  \left| \int_0^T\int_{\Omega \backslash \Omega_m} \frac{\partial u_n}{\partial t}(\vx,t) v_n(\vx,t) \,\dif\vx\,\dif t \right|.
  \end{align*}
  Note that we can use the explicit representation \eqref{eq:def_bilinear_form_d} of $d$ and $d_{Q_m}$ since $u_n$ and $v_n$ are smooth. The convergence of $|\Omega \backslash \Omega_m|$ to zero as $m$ tends to infinity allows us to find an $m(\varepsilon,n)$ such that 
  \begin{equation*}
    | d(u_n,v_n) - d_{Q_m}(u_n,v_n) | < \frac{\varepsilon}{3}
  \end{equation*} 
  for all $m>m(\varepsilon,n)$. Hence, for a fixed $n > n(\varepsilon)$ and all $m > m(\varepsilon,n)$ we can bound the 
  right-hand side of \eqref{eq:estimate_difference_d} by $\varepsilon$. Therefore, both terms on the right-hand side of \eqref{eq:estimate_difference_bf_ntrace} converge to zero as~$m \rightarrow \infty$.
\end{proof}

\begin{lemma}
  Let $\{\Sigma_m\}_m$ be a smooth approximating sequence of $\Sigma$ as introduced at the beginning of the paragraph.
  Let $\varphi, \psi \in H^{1/2,1/4}(\Sigma)$ and $\psi_m$ be defined by \eqref{eq:def_psi_m}. Then 
  \begin{equation}
    \lim_{m\rightarrow\infty} \langle \scurlm \psi_m, \dtracem \widetilde{V}(\curl_{\Sigma} \varphi) \rangle_{\Sigma_m} 
    = \langle \scurl \psi, V (\scurl \varphi) \rangle_{\Sigma}.
  \end{equation} 
\end{lemma}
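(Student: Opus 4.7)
The plan is to represent both sides of the claimed equality as integrals of the \emph{same} $\mathbf{L}^2$-integrand over $Q_m$ and $Q$ respectively, so that the convergence reduces to the vanishing of an integral over $Q\setminus Q_m$. The main tool is Proposition~\ref{prop:surface_curl}, which states that the defining identity \eqref{eq:def_scurl} of the surface curl is independent of the chosen right inverse of the Dirichlet trace. I would exploit this freedom on both sides, selecting extensions that restrict nicely from $Q$ to $Q_m$.

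Concretely, for the pairing on $\Sigma$, I take $\rinvdtraceend \psi \in H^{1,1/2}_{;,0}(Q) \subset H^{1,1/2}(Q)$ as the extension of $\psi$ and the componentwise potential $\widetilde V(\scurl \varphi) \in \mathbf{H}^{1,1/2}_{;0,}(Q)$ as the extension of $V(\scurl \varphi) = \dtrace \widetilde V(\scurl \varphi)$. For the pairing on $\Sigma_m$, the identity $\psi_m = \dtracem(\rinvdtraceend \psi)$ holds by \eqref{eq:def_psi_m} and, componentwise, $\dtracem \widetilde V(\scurl \varphi)$ is by construction the trace of $\widetilde V(\scurl \varphi)$ on $\Sigma_m$; hence the restrictions $\rinvdtraceend \psi|_{Q_m}$ and $\widetilde V(\scurl \varphi)|_{Q_m}$ are admissible right inverses of $\dtracem$ in the sense of Corollary~\ref{cor:right_inverse_dtrace} applied to $Q_m$. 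Plugging these into \eqref{eq:def_scurl} on $\Sigma$ and $\Sigma_m$ respectively yields
\begin{equation*}
  \langle \scurl \psi, V(\scurl \varphi) \rangle_\Sigma
    = \int_Q \nabla(\rinvdtraceend \psi) \cdot \curl \widetilde V(\scurl \varphi)\,\dif\vx\,\dif t,
\end{equation*}
\begin{equation*}
  \langle \scurlm \psi_m, \dtracem \widetilde V(\scurl \varphi) \rangle_{\Sigma_m}
    = \int_{Q_m} \nabla(\rinvdtraceend \psi) \cdot \curl \widetilde V(\scurl \varphi)\,\dif\vx\,\dif t.
\end{equation*}

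The difference is therefore the integral of the single $L^1$-function $\nabla(\rinvdtraceend \psi) \cdot \curl \widetilde V(\scurl \varphi)$ over $Q \setminus Q_m$. Both factors are in $\mathbf{L}^2(Q)$ because $\rinvdtraceend\psi\in H^{1,1/2}(Q)\subset L^2(0,T;H^1(\Omega))$ and $\widetilde V(\scurl \varphi)\in \mathbf{H}^{1,1/2}_{;0,}(Q)\subset \mathbf{L}^2(0,T;\mathbf{H}^1(\Omega))$, so by Cauchy--Schwarz their dot product belongs to $L^1(Q)$. Since $|Q\setminus Q_m| = T\,|\Omega\setminus\Omega_m| \to 0$ by Theorem~\ref{thm:approximating_domains}, the absolute continuity of the Lebesgue integral gives the claimed limit.

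No genuinely hard step is anticipated: the only care required is to verify that $\rinvdtraceend \psi|_{Q_m}$ and $\widetilde V(\scurl \varphi)|_{Q_m}$ really qualify as continuous right inverses of $\dtracem$ (immediate from the continuity of restriction from $H^{1,1/2}(Q)$ to $H^{1,1/2}(Q_m)$ and from the trace identities just noted), and to invoke Proposition~\ref{prop:surface_curl} to legitimise replacing the generic extension in \eqref{eq:def_scurl} by these specific ones.
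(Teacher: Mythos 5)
Your proposal is correct and follows essentially the same route as the paper's own proof: choose $\rinvdtraceend\psi$ and $\widetilde V(\scurl\varphi)$ as the two extensions in the defining identity \eqref{eq:def_scurl} for both $\Sigma$ and $\Sigma_m$, write the difference of the two pairings as the integral of $\nabla(\rinvdtraceend\psi)\cdot\curl\widetilde V(\scurl\varphi)\in L^1(Q)$ over $Q\setminus Q_m$, and conclude from $|\Omega\setminus\Omega_m|\to 0$. The only cosmetic difference is that you phrase the verification in terms of "right inverses" of $\dtracem$, whereas Proposition~\ref{prop:surface_curl} only requires the weaker fact that the restrictions to $Q_m$ are $H^{1,1/2}(Q_m)$-extensions of the respective boundary data, which is all the paper invokes.
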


\begin{proof}
  By the definition of the surface curl in~\eqref{eq:def_scurl} there holds
  \begin{align*}
    \langle \scurl \psi, V (\scurl \varphi) \rangle_{\Sigma} 
    &= \langle \nabla \rinvdtraceend \psi, \curl( \widetilde{V} (\scurl \varphi) ) \rangle_{\mathbf{L}^2(Q)} \\
    &= \int_0^T \int_\Omega (\nabla \rinvdtraceend \psi)(\vx,t) \cdot \curl( \widetilde{V} (\scurl \varphi) )(\vx,t)
      \,\dif\vx\,\dif t,
  \end{align*}
  where we used that $\widetilde{V} (\scurl \varphi)$ is an extension of $V (\scurl \varphi)$ and that the definition in~\eqref{eq:def_scurl} is independent of the extensions of the the function $\psi$ and the test function, see Proposition~\ref{prop:surface_curl}. Similarly it follows that 
  \begin{equation*}
    \langle \scurlm \psi_m, \dtracem \widetilde{V}(\curl_{\Sigma} \varphi) \rangle_{\Sigma_m} 
    = \int_0^T \int_{\Omega_m} (\nabla \rinvdtraceend \psi)(\vx,t) \cdot 
      \curl( \widetilde{V} (\scurl \varphi) )(\vx,t)
      \,\dif\vx\,\dif t,
  \end{equation*} 
  by using in addition, that $(\rinvdtraceend \psi)|_{Q_m}$ is an extension of $\psi_m$ due to its definition in \eqref{eq:def_psi_m}. Therefore
  \begin{align*}
    \Big| \langle \scurl \psi, V (\scurl \varphi) \rangle_{\Sigma} 
      &- \langle \scurlm \psi_m, \dtracem \widetilde{V}(\curl_{\Sigma} \varphi) \rangle_{\Sigma_m} \Big| \\
    &= \left| \int_0^T \int_{\Omega \backslash \Omega_m} 
        (\nabla \rinvdtraceend \psi)(\vx,t) \cdot \curl( \widetilde{V} (\scurl \varphi) )(\vx,t)
        \,\dif\vx\,\dif t \right|.
  \end{align*}
  Note that $(\nabla \rinvdtraceend \psi) \in \mathbf{L}^2(Q)$ and so is $\curl( \widetilde{V} (\scurl \varphi) )$, due to $\widetilde{V} (\scurl \varphi) \in \mathbf{H}^{1,1/2}_{;0,}(Q)$. Hence, the right-hand side in the last equation converges to zero as $m \rightarrow \infty$ because ${|\Omega\backslash\Omega_m| \rightarrow 0}$.
\end{proof}

To show a similar convergence result for the second term of the right-hand side in \eqref{eq:interior_ibp_formula} we need to require the test function $\psi$ to be more regular. The following result holds.

\begin{lemma} \label{lem:convergence_time_bf}
  Let $\varphi \in H^{1/2,1/4}(\Sigma)$, $\psi \in \dtrace(C^\infty_c(\overline{\Omega}\times(0,T)))$ and $\widetilde{\psi} \in C^\infty_c(\mathbb{R}^3\times(0,T))$ be such that $\psi = \widetilde{\psi}|_{\Sigma}$. Let~$\{\Sigma_m\}_m$ be a smooth approximating sequence of $\Sigma$ as introduced at the beginning of the paragraph and $\psi_m := \widetilde{\psi}|_{\Sigma_m}$. Then 
  \begin{equation} \label{eq:convergence_time_bf}
    \lim_{m\rightarrow\infty} \left\langle \nvecm\cdot 
      \dtracem \left(\frac{\partial}{\partial t} \widetilde{V} (\varphi \vn ) \right), 
      \psi_m \right\rangle_{\Sigma_m} 
    = b( \varphi, \psi )
  \end{equation}
  with $b$ defined in \eqref{eq:def_b}.
\end{lemma}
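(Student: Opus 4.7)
The plan is to rewrite the left-hand side of \eqref{eq:convergence_time_bf} first as a classical surface integral (exploiting the smoothness of $\widetilde V(\varphi\vn)$ inside $Q$), then to move the time derivative onto $\widetilde\psi$ by integration by parts in $t$, to convert the resulting surface integral on $\Sigma_m$ into a volume integral on $Q_m$ via the spatial divergence theorem applied slice-wise in $t$, to pass to the limit $m\to\infty$ by dominated convergence using $|\Omega\setminus\Omega_m|\to 0$, and finally to invoke the Gauss formula on $\Omega$ to identify the result with the unfolded form of $b(\varphi,\psi)$.

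Concretely, a componentwise application of Proposition~\ref{prop:distributional_representation_grad_dlp} yields $\widetilde V(\varphi\vn) \in \mathbf{C}^\infty(Q)$, so that its time derivative and its restriction to the compactly contained boundary $\Sigma_m$ are classical. Since $\widetilde\psi$ vanishes in neighborhoods of $t=0$ and $t=T$, integration by parts in $t$ gives
\begin{equation*}
  \left\langle \nvecm \cdot \dtracem\!\left(\frac{\partial}{\partial t}\widetilde V(\varphi\vn)\right), \psi_m\right\rangle_{\Sigma_m}
  = -\int_0^T\!\!\int_{\Gamma_m} \nvecm(\vx) \cdot \widetilde V(\varphi\vn)(\vx,t)\,\frac{\partial\widetilde\psi}{\partial t}(\vx,t)\,\dif\vs_\vx\,\dif t.
\end{equation*}
For each fixed $t$ the integrand is smooth on the Lipschitz domain $\overline{\Omega_m}$, so the divergence theorem applied slice-wise in $t$ rewrites this as $-\int_0^T\int_{\Omega_m}\diver_\vx\!\big(\widetilde V(\varphi\vn)\,\partial_t\widetilde\psi\big)\,\dif\vx\,\dif t$.

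For the passage to the limit, the components of $\widetilde V(\varphi\vn)$ lie in $H^{1,1/2}_{;0,}(Q) \hookrightarrow L^2(0,T;H^1(\Omega))$, so $\diver_\vx \widetilde V(\varphi\vn) \in L^2(Q)$; together with the boundedness of $\partial_t\widetilde\psi$ and $\nabla\partial_t\widetilde\psi$ this puts the integrand into $L^1(Q)$. Dominated convergence, combined with $|\Omega\setminus\Omega_m|\to 0$ from Theorem~\ref{thm:approximating_domains}, yields convergence of these volume integrals to $-\int_0^T\int_\Omega \diver_\vx(\widetilde V(\varphi\vn)\,\partial_t\widetilde\psi)\,\dif\vx\,\dif t$. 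Applying the Gauss formula on the Lipschitz domain $\Omega$ slice-wise in $t$ (using that $\widetilde V(\varphi\vn)(\cdot,t) \in \mathbf{H}^1(\Omega)$ with trace $V(\varphi\vn)(\cdot,t)$ for a.e.\ $t$) and integrating in $t$ transforms the right-hand side into $-\int_0^T\int_\Gamma \vn(\vx)\cdot V(\varphi\vn)(\vx,t)\,\partial_t\widetilde\psi(\vx,t)\,\dif\vs_\vx\,\dif t$, which is precisely the classical form of $b(\varphi,\psi)$ obtained by unfolding the distributional definition~\eqref{eq:def_b}. The main technical point I expect is the slice-wise application of the Gauss formula and the accompanying Fubini step; these rely on the $\mathbf{H}^1(\Omega)$-trace theorem for a.e.\ $t$ and on the consistency of the pointwise-in-$t$ trace of $\widetilde V(\varphi\vn)$ with $V(\varphi\vn) = \dtrace \widetilde V(\varphi\vn)$, which can be verified by approximating $\widetilde V(\varphi\vn)$ in $\mathbf{H}^{1,1/2}_{;0,}(Q)$ using Proposition~\ref{prop:density_anisotropic_Q}.
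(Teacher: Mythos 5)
Your proposal is correct and follows essentially the same route as the paper: integration by parts in $t$ using $\widetilde V(\varphi\vn)\in\mathbf{C}^\infty(Q)$ and the compact support of $\widetilde\psi$ in $(0,T)$, conversion of the surface integrals over $\Gamma_m$ and $\Gamma$ into volume integrals via the divergence theorem (the paper does this for both at once, using that $\widetilde V(\varphi\vn)\,\partial_t\widetilde\psi \in L^2(0,T;\mathbf{H}^1(\Omega))$, rather than converting back to a surface integral at the end as you do), and passage to the limit using $|\Omega\setminus\Omega_m|\to 0$. One small terminological quibble: the limit step is not quite dominated convergence, since $|\Omega\setminus\Omega_m|\to 0$ alone does not give pointwise convergence of $\mathbbm{1}_{\Omega_m}$; the paper instead uses Cauchy--Schwarz and absolute continuity of the Lebesgue integral over $Q\setminus Q_m$, which is what you in fact need.
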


\begin{proof}
  We start by showing the identity
  \begin{equation} \label{eq:distributional_identity_time_bf}
    \left\langle \nvecm\cdot \dtracem \left(\frac{\partial}{\partial t} \widetilde{V} (\varphi \vn ) \right), 
      \psi_m \right\rangle_{\Sigma_m} 
    = \frac{\partial}{\partial t} (\dtracem)'
      \left( \dtracem (\widetilde{V}(\varphi \vn)) \cdot \nvecm\right)[ \widetilde{\psi} ],
  \end{equation}
  where the right-hand side is understood as application of a distribution in $\mathcal{D}'(\mathbb{R}^3 \times (0,T))$ to $\widetilde{\psi} \in C^\infty_c(\mathbb{R}^3 \times (0,T))$. For the duality product on the left-hand side there holds
  \begin{align*}
    \left\langle \nvecm\cdot \dtracem \left(\frac{\partial}{\partial t} \widetilde{V} (\varphi \vn ) \right), 
      \psi_m \right\rangle_{\Sigma_m} 
    &= \int_0^T\int_{\Gamma_m} 
      \frac{\partial}{\partial t} (\widetilde{V} (\varphi \vn ))(\vx,t) \cdot \nvecm(\vx) 
      \widetilde{\psi}(\vx,t) \,\dif\vs_\vx\,\dif t \\
    &= -\int_0^T\int_{\Gamma_m} 
      \widetilde{V} (\varphi \vn)(\vx,t) \cdot \nvecm(\vx) 
      \frac{\partial \widetilde{\psi}}{\partial t}(\vx,t) \,\dif\vs_\vx\,\dif t,
  \end{align*}
  where we used classical integration by parts in the second step, which is possible because ${\widetilde{V} (\varphi \vn ) \in C^\infty(Q)}$, $\widetilde{\psi} \in C^\infty_c(\mathbb{R}^3 \times (0,T))$ and $\Sigma_m \subset Q$. The right-hand side of this equation can be interpreted as a duality product on $\mathcal{D}'(\mathbb{R}^3 \times (0,T)) \times C^\infty_c(\mathbb{R}^3 \times (0,T))$ by
  \begin{equation*}
    - (\dtracem)'\left( \dtracem (\widetilde{V}(\varphi \vn))\cdot\nvecm\right) 
        \left[ \frac{\partial \widetilde{\psi}}{\partial t} \right] 
    = \frac{\partial}{\partial t} (\dtracem)'\left( \dtracem (\widetilde{V}(\varphi \vn))
      \cdot\nvecm\right) [ \widetilde{\psi} ],
  \end{equation*}
  where the last equality is just the definition of the distributional time derivative. Therefore, \eqref{eq:distributional_identity_time_bf} holds true.

  Due to \eqref{eq:distributional_identity_time_bf} the convergence in \eqref{eq:convergence_time_bf} follows if we can show that
  \begin{equation} \label{eq:distributional_convergence_time_bf}
    \frac{\partial}{\partial t} (\dtracem)'
      \left( \dtracem (\widetilde{V}(\varphi \vn)) \cdot \nvecm\right)
    \rightarrow \frac{\partial}{\partial t} (\dtrace)' \Big( V(\varphi \vn) \cdot \vn \Big)
      \quad \text{in } \mathcal{D}'(\mathbb{R}^3 \times (0,T)).
  \end{equation}
  For this purpose, let $w \in C^\infty_c(\mathbb{R}^3\times(0,T))$. Then 
  \begin{align*}
    \bigg| & \frac{\partial}{\partial t} (\dtrace)' \Big( V(\varphi \vn) \cdot \vn \Big)[w]
      - \frac{\partial}{\partial t} 
      (\dtracem)' \Big( \dtracem (\widetilde{V}(\varphi \vn)) \cdot \nvecm \Big)[w] \bigg| \\
    &=\bigg| \int_0^T\int_{\Gamma} V(\varphi \vn)(\vx,t)\cdot \vn(\vx) \frac{\partial w}{\partial t}(\vx,t)
      \,\dif\vs_\vx\,\dif t
      - \int_0^T\int_{\Gamma_m} \widetilde{V}(\varphi \vn)(\vx,t)\cdot \nvecm(\vx) \frac{\partial w}{\partial t}(\vx,t)
      \,\dif\vs_\vx\,\dif t \bigg|
  \end{align*}
  Since the product of $\partial w/\partial t$ and $\widetilde{V}(\varphi \vn)$ is in $L^2(0,T;\mathbf{H}^1(\Omega))$ we can apply the divergence theorem to both surface integrals in the previous line and get
  \begin{align*}
    \bigg|
      \int_0^T \int_\Omega \bigg( \diver( \widetilde{V}(\varphi \vn))(\vx,t) \frac{\partial w}{\partial t}(\vx,t)
          &+ \widetilde{V}(\varphi \vn)(\vx,t) \cdot \nabla \left( \frac{\partial w}{\partial t} \right)(\vx,t)
          \bigg) \,\dif \vx\,\dif t \\
      - \int_0^T \int_{\Omega_m} \bigg( \diver( \widetilde{V}(\varphi \vn))&(\vx,t) \frac{\partial w}{\partial t}(\vx,t)
          + \widetilde{V}(\varphi \vn)(\vx,t) \cdot \nabla \left( \frac{\partial w}{\partial t} \right)(\vx,t)
          \bigg) \,\dif \vx\,\dif t
    \bigg| \\
    \leq \bigg| \int_0^T\int_{\Omega\backslash\Omega_m} \diver( \widetilde{V}(\varphi \vn))(\vx,t) 
        \frac{\partial w}{\partial t}(\vx,t) \,&\dif \vx\,\dif t \bigg|
      + \bigg| \int_0^T\int_{\Omega\backslash\Omega_m} \widetilde{V}(\varphi \vn)(\vx,t) \cdot 
        \nabla \left( \frac{\partial w}{\partial t} \right)(\vx,t) \,\dif \vx\,\dif t \bigg| \\
    \leq \|\diver( \widetilde{V}(\varphi \vn)) \|_{L^2(Q\backslash Q_m)}
        \bigg\| \frac{\partial w}{\partial t} \bigg\|_{L^2(Q\backslash Q_m)}
      &+ \| \widetilde{V}(\varphi \vn) \|_{\mathbf{L}^2(Q\backslash Q_m)}
        \bigg\| \nabla \left( \frac{\partial w}{\partial t} \right) \bigg\|_{\mathbf{L}^2(Q\backslash Q_m)}.
  \end{align*}
  The norms in the last line are bounded for all $m$ and converge to 0 as $m\rightarrow\infty$, since~${|Q\backslash Q_m| \rightarrow 0}$. Therefore, we have established the convergence in \eqref{eq:distributional_convergence_time_bf} and thus in~\eqref{eq:convergence_time_bf}.
\end{proof}

Finally, we are ready to give a proof of Theorem~\ref{thm:general_ibp_formula} by collecting all the results.
\begin{proof}[\textbf{Proof of Theorem~\ref{thm:general_ibp_formula}}]
  Let~$\{\Sigma_m\}_m$ be a smooth approximating sequence of $\Sigma$ as introduced at the beginning of the paragraph. As in Lemma~\ref{lem:convergence_time_bf} we assume first that the test function $\psi \in \dtrace(C^\infty_c(\overline{\Omega} \times (0,T)))$, i.e.~$\psi = \widetilde{\psi}|_{\Sigma}$ for some $\widetilde{\psi} \in C^\infty_c(\mathbb{R}^3 \times (0,T))$, and denote $\psi_m := \widetilde{\psi}|_{\Sigma_m}$.
  On each boundary $\Sigma_m$ the integration by parts formula \eqref{eq:interior_ibp_formula} derived in Proposition~\ref{prop:interior_ibp_formula} is valid for $\varphi$ and $\psi_m$. By applying Lemmata \ref{lem:convergence_bf_ntrace}--\ref{lem:convergence_time_bf} the limit of \eqref{eq:interior_ibp_formula} as $m\rightarrow \infty$ is given by
  \begin{equation*}
    \langle D \varphi, \psi \rangle_\Sigma = 
      \alpha^2 \langle \scurl \psi, V (\scurl \varphi) \rangle_{\Sigma} + \alpha \, b(\varphi, \psi),
  \end{equation*}
  which is the desired integration by parts formula \eqref{eq:general_ibp_formula} on $\Sigma$. 

  It remains to show that \eqref{eq:general_ibp_formula} holds also for general $\psi \in H^{1/2,1/4}(\Sigma)$ in the appropriate sense. By reordering the terms in the integration by parts formula we get 
  \begin{equation*}
    \alpha \, b(\varphi, \psi) = 
      \langle D \varphi, \psi \rangle_\Sigma - \alpha^2 \langle \scurl \psi, V (\scurl \varphi) \rangle_{\Sigma}
  \end{equation*} 
  for $\psi \in \dtrace(C^\infty_c(\overline{\Omega}\times(0,T)))$. Since the three operators $D: H^{1/2,1/4}(\Sigma) \rightarrow H^{-1/2,-1/4}(\Sigma)$, ${\scurl: H^{1/2,1/4}(\Sigma) \rightarrow \mathbf{H}^{-1/2,-1/4}(\Sigma)}$ and $V: \mathbf{H}^{-1/2,-1/4}(\Sigma) \rightarrow \mathbf{H}^{1/2,1/4}(\Sigma)$ are continuous, the right-hand side of that equation interpreted as a bilinear form on ${H^{1/2,1/4}(\Sigma) \times H^{1/2,1/4}(\Sigma)}$ is continuous too. Hence, also the left-hand side, i.e.~the bilinear form 
  \begin{align*}
    b(\cdot,\cdot):\ &H^{1/2,1/4}(\Sigma) \times \dtrace(C^\infty_c(\overline{\Omega}\times(0,T))) 
      \rightarrow \mathbb{R} \\
    &(\varphi,\psi) \mapsto b(\varphi,\psi) 
      = \left\langle V (\varphi \vn), \frac{\partial}{\partial t} \psi \vn \right\rangle_{\Sigma}
  \end{align*}
  is continuous with respect to the norm in $H^{1/2,1/4}(\Sigma) \times H^{1/2,1/4}(\Sigma)$ and it admits a unique, continuous extension to this space due to the density of $\dtrace(C^\infty_c(\overline{\Omega}\times(0,T)))$ in $H^{1/2,1/4}(\Sigma)$. In particular, \eqref{eq:general_ibp_formula} holds for general $\varphi,\psi \in H^{1/2,1/4}(\Sigma)$ by continuity, if we identify $b$ with its continuous extension.
\end{proof}

\section{The bilinear form \texorpdfstring{$b(\cdot,\cdot)$}{b}} \label{sec:bf_time_derivative}
In this section we focus on special situations in which we can express the bilinear form $b(\cdot, \cdot)$ on the right-hand side of the general integration by parts formula \eqref{eq:general_ibp_formula} in terms of weakly singular integrals.

To start off, we consider representation \eqref{eq:formal_def_second_bilinear_form} and show that the corresponding integral kernel $\partial G_\alpha/ \partial\tau$ is not Lebesgue integrable on $\Sigma \times \Sigma$.

\begin{proposition} \label{prop:kernel_not_integrable}
  The function $((\vx,t),(\vy,\tau)) \mapsto \partial G_\alpha/ \partial\tau (\vx-\vy,t-\tau)$ is not Lebesgue integrable on $\Sigma \times \Sigma$.
\end{proposition}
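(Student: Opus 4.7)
The plan is to apply Tonelli's theorem (the integrand $|\partial G_\alpha/\partial\tau|$ is nonnegative) and reduce the claim to the statement that for some $(\vx_0, t_0) \in \Sigma$ with $t_0 \in (0, T)$ the inner integral
\begin{equation*}
  I(\vx_0, t_0) := \int_0^T \int_\Gamma \left|\frac{\partial G_\alpha}{\partial\tau}(\vx_0 - \vy, t_0 - \tau)\right|\, \dif\vs_\vy\, \dif\tau
\end{equation*}
equals $+\infty$. Since $G_\alpha(\vz, s) = 0$ for $s \leq 0$, only the range $\tau \in (0, t_0)$ contributes, and I write $s := t_0 - \tau > 0$.

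Direct differentiation of \eqref{eq:def_fundamental_solution} yields
\begin{equation*}
  \frac{\partial G_\alpha}{\partial s}(\vz, s) = \frac{s^{-5/2}}{(4\pi\alpha)^{3/2}} \left(\frac{|\vz|^2}{4\alpha s} - \frac{3}{2}\right) \exp\!\left(-\frac{|\vz|^2}{4\alpha s}\right),
\end{equation*}
exhibiting the parabolic self-similar scaling $s^{-5/2}$. The idea is to localize near the diagonal $(\vy,\tau)\approx(\vx_0,t_0)$ and extract a lower bound of order $s^{-3/2}$, which fails to be integrable at $s = 0$.

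I fix $t_0 \in (0, T)$ and $\vx_0 \in \Gamma$ and exploit the Lipschitz structure of $\Gamma$ near $\vx_0$: after a rigid motion $\Gamma$ is locally the graph of a Lipschitz function $g\colon U \subset \mathbb{R}^2 \to \mathbb{R}$ with $g(\vzero) = 0$ and Lipschitz constant $L$, parameterizing a patch as $\vy = \vx_0 + (\xi, g(\xi))$. Then $|\xi| \leq |\vx_0 - \vy| \leq \sqrt{1+L^2}\,|\xi|$ and the surface area element dominates the planar Lebesgue element on $U$. Restricting the $\vy$-integration to the parametric disk $|\xi|^2 \leq 4\alpha s/(1+L^2)$, which has area of order $s$, I ensure that $|\vx_0-\vy|^2/(4\alpha s) \leq 1$, so that the exponential is bounded below by $e^{-1}$ and the bracket $|\vx_0-\vy|^2/(4\alpha s) - 3/2$ is at most $-1/2$. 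This gives
\begin{equation*}
  \int_\Gamma \left|\frac{\partial G_\alpha}{\partial\tau}(\vx_0-\vy, t_0-\tau)\right|\, \dif\vs_\vy \geq c\, s^{-3/2}
\end{equation*}
for all $s \in (0, s_0)$, with $c, s_0 > 0$ depending only on $\alpha$ and $L$. Since $\int_0^{\min(s_0, t_0)} s^{-3/2}\, \dif s = +\infty$, I conclude that $I(\vx_0, t_0) = +\infty$, which gives the claim by Tonelli.

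The only step requiring any real care is the passage from $\Gamma$ to the parametric domain via the Lipschitz graph, but the needed bi-Lipschitz comparison of distances and the bounded surface Jacobian only affect the constants $c$ and $s_0$, not the non-integrable $s^{-3/2}$ scaling. I therefore do not foresee any serious obstacle beyond this routine bookkeeping.
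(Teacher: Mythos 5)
Your proof is correct and takes a genuinely different route from the paper's. The paper presents a signed-integral argument for smooth $\Gamma$ only: it considers the time-truncated integral $I_\varepsilon$ in \eqref{eq:def_I_eps} (without absolute values), integrates in time by the fundamental theorem of calculus, and then invokes an asymptotic expansion of the Gaussian surface integral from \cite{Tau2009} to conclude $I_\varepsilon = \mathcal{O}(\varepsilon^{-1/2})$ as $\varepsilon \to 0$; divergence of $I_\varepsilon$ then implicitly rules out Lebesgue integrability via dominated convergence. Your approach instead works with $|\partial G_\alpha/\partial\tau|$ directly, applies Tonelli, and produces a self-contained pointwise lower bound by localizing in a parabolic ball $|\xi|^2 \lesssim s$ on a Lipschitz chart, yielding the $s^{-3/2}$ blow-up of the inner spatial integral. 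This is an elementary argument that handles arbitrary Lipschitz $\Gamma$ without external asymptotic lemmas, which the paper only claims is possible "by a direct computation"; the paper's version is shorter modulo the citation and keeps the signed truncated integral in view, which is the object one would try to interpret as a principal value. One small imprecision in your write-up: to conclude from Tonelli you need the inner integral $I(\vx_0,t_0)$ to be $+\infty$ on a set of positive measure in $\Sigma$, not merely "for some $(\vx_0,t_0)$." Your argument in fact establishes this for every $(\vx_0,t_0)$ with $t_0 > 0$ (the local graph description is available at every boundary point, and the constants can be made uniform via the finite covering by charts), so the conclusion is sound, but the stated reduction should be phrased accordingly.
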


The proposition can be shown by a direct computation for arbitrary Lipschitz domains. For smooth boundaries $\Gamma$ a simpler proof is available, which we present in the following.
\begin{proof}
  Let $\Gamma$ be smooth. For $\varepsilon > 0$ we consider the integral 
  \begin{equation} \label{eq:def_I_eps}
    I_\varepsilon := \int_0^T \int_\Gamma \int_0^{(t-\varepsilon)^+} \int_\Gamma
      \frac{\partial G_\alpha}{\partial \tau} (\vx-\vy,t-\tau)\, \dif\vs_\vy\, \dif\tau\, \dif\vs_\vx\, \dif t,
  \end{equation}
  where $(t-\varepsilon)^+:=\max(t-\varepsilon, 0)$. It suffices to show that $I_\varepsilon$ diverges as $\varepsilon \rightarrow 0$. Since the kernel $((\vx,t),(\vy,\tau)) \mapsto \partial G_\alpha/ \partial\tau (\vx-\vy,t-\tau)$ is smooth on the integration domain in \eqref{eq:def_I_eps} for all $\varepsilon > 0$, we can first integrate with respect to the time variables and get
  \begin{align*}
    \int_0^T &\int_0^{(t-\varepsilon)^+} 
    \frac{\partial G_\alpha}{\partial \tau} (\vx-\vy,t-\tau)\, \dif\tau\, \dif t = 
    \int_\varepsilon^T \left( G_\alpha(\vx-\vy, \varepsilon ) - G_\alpha(\vx-\vy, t ) \right) \dif t\\ &= 
    \frac{T - \varepsilon}{(4\pi\alpha \varepsilon)^{3/2}}\exp\bigg( -\frac{|\vx-\vy|^2}{4\alpha \varepsilon} \bigg) 
    + \frac{1}{4 \pi \alpha |\vx-\vy|} \erf\left( \frac{|x-y|}{2\sqrt{\alpha T}}\right)
    - \frac{1}{4 \pi \alpha |\vx-\vy|} \erf\left( \frac{|x-y|}{2\sqrt{\alpha \varepsilon}}\right).
  \end{align*}
  The integrals of the last two terms over $\Gamma \times \Gamma$ are uniformly bounded for arbitrary $\varepsilon > 0$ since the integrands are products of an error function, which is bounded on $\mathbb{R}$, and a scaled Laplace kernel, which is integrable on $\Gamma \times \Gamma$.

  For the first term one can show that \cite[cf.~Theorem~2.1 for smooth $\Gamma$]{Tau2009}
  \begin{equation} \label{eq:divergent_part_I_eps}
    \int_\Gamma \frac{T - \varepsilon}{(4\pi\alpha \varepsilon)^{3/2}}\exp\bigg( -\frac{|\vx-\vy|^2}{4\alpha \varepsilon} \bigg) \dif\vs_\vy
      = \frac{T - \varepsilon}{\sqrt{4 \pi \alpha \varepsilon}} ( 1 + \varepsilon\, \widetilde{v}_\alpha(\vx,\varepsilon)) 
  \end{equation}
  where $(\vx, t) \mapsto \widetilde{v}_\alpha(\vx,t)$ is bounded on $\Gamma \times [0,T]$. The integral of \eqref{eq:divergent_part_I_eps} with respect to $\vx$ over $\Gamma$ is~$\mathcal{O}(\varepsilon^{-1/2})$. Hence, $I_\varepsilon = \mathcal{O}(\varepsilon^{-1/2})$ and thus it diverges as $\varepsilon \rightarrow 0$.
\end{proof}

Proposition~\ref{prop:kernel_not_integrable} indicates that \eqref{eq:formal_def_second_bilinear_form} is not an appropriate representation of the bilinear form~$b$. Nonetheless, it was used in applications where it yielded satisfactory results, see e.g.~\cite[Example~3.2]{Mes2014}. Let us further comment on this.

The standard way to discretize the bilinear forms induced by boundary integral operators of the heat equation is to use a tensor product approach. To start off, the space-time boundary~$\Sigma$ is approximated by a tensor product decomposition $\Sigma_h = I_{N_t} \times \Gamma_h$, where $I_{N_t} := \{\tau_k\}_{k=1}^{N_t}$ is a decomposition of the time interval $(0,T)$ into $N_t$ pairwise disjoint, possibly non-uniform, open intervals $\tau_k = (t_{k-1},t_k)$ and $\Gamma_h$ is an admissible triangular mesh approximating the spatial boundary~$\Gamma$. Let $S_{h_t}^{0}(I_{N_t})$ be the space of piecewise constant functions defined on $I_{N_t}$ and $S_{h_x}^1(\Gamma_h)$ be the space of piecewise linear and globally continuous functions on the mesh~$\Gamma_h$. Then the tensor product space
\begin{equation} \label{eq:lowest_order_ansatz_functions}
  S_{h_x,h_t}^{1 \otimes 0}(\Sigma_h) := S_{h_x}^1(\Gamma_h) \otimes S_{h_t}^{0}(I_{N_t})
\end{equation}
is a subspace of $H^{1/2,1/4}(\Sigma)$ typically used for its discretization. For $\varphi_{\vx} \in S_{h_x}^1(\Gamma_h)$ and the indicator function $\mathbbm{1}_{\tau_j}$ of an interval $\tau_j \in I_{N_t}$ we have $\mathbbm{1}_{\tau_j} \otimes \varphi_{\vx} \in S_{h_x,h_t}^{1 \otimes 0}(\Sigma_h)$. By a very formal computation we can evaluate ${\alpha\, b( \mathbbm{1}_{\tau_j} \otimes \varphi_{\vx}, \mathbbm{1}_{\tau_j} \otimes \varphi_{\vx})}$ via \eqref{eq:formal_def_second_bilinear_form} yielding 
\begin{align} 
  \alpha\, b(\mathbbm{1}_{\tau} &\otimes\varphi_{\vx}, \mathbbm{1}_{\tau} \otimes \varphi_{\vx} ) \notag \\
  &=- \alpha \int_{t_{j-1}}^{t_j} \int_{\Gamma} \varphi_{\vx}(\vx) \vn(\vx)\cdot \int_{t_{j-1}}^t \int_{\Gamma} 
    \frac{\partial G_{\alpha}}{\partial\tau}(\vx-\vy, t-\tau) \varphi_{\vx}(\vy) \vn(\vy) 
    \, \dif\vs_\vy \, \dif\tau \, \dif\vs_\vx \, \dif t \notag \\
  &= - \alpha \int_{\Gamma} \int_{\Gamma} \varphi_{\vx}(\vx) \varphi_{\vx}(\vy) \vn(\vx)\cdot \vn(\vy) 
    \int_{t_{j-1}}^{t_j} \int_{t_{j-1}}^t \frac{\partial G_{\alpha}}{\partial\tau}(\vx-\vy, t-\tau) 
    \, \dif\tau \, \dif t \, \dif\vs_\vy \, \dif\vs_\vx \notag  \\
  &= \alpha \int_{\Gamma} \int_{\Gamma} \varphi_{\vx}(\vx) \varphi_{\vx}(\vy) \vn(\vx)\cdot \vn(\vy) 
    \int_{t_{j-1}}^{t_j} G_\alpha(\vx-\vy, t-t_{j-1}) \, \dif t \, \dif\vs_\vy \, \dif\vs_\vx, \label{eq:formal_primitive}
\end{align}
where we used that $\partial G_\alpha/ \partial\tau (\vx - \vy, 0)$ is zero if $\vx-\vy \neq \vzero$. The remaining expression is then unproblematic since the kernel $((\vx,t),\vy) \mapsto G_\alpha(\vx-\vy, t-t_{j-1})$ is Lebesgue integrable on~$(\Gamma \times \tau_j) \times \Gamma$. However, the steps taken to get this expression cannot be easily justified. Indeed, one cannot apply the classical Fubini theorem to change the order of integration, since the kernel $((\vx,t),(\vy,\tau)) \mapsto \partial G_\alpha/ \partial\tau(\vx-\vy,t-\tau)$ is not Lebesgue integrable on the integration domain as seen in Proposition~\ref{prop:kernel_not_integrable}, and one cannot ignore the singularities of $(\vx, \vy) \mapsto \partial G_\alpha/ \partial\tau (\vx - \vy, 0)$ at~$\vx=\vy$. Nonetheless, numerical results indicate that the result in \eqref{eq:formal_primitive} holds true. This motivates us to find a representation of $\alpha\, b(\cdot, \cdot)$ that is similar to \eqref{eq:formal_def_second_bilinear_form} but overcomes the problem of the locally non-integrable kernel.

\subsection{An integral representation of the bilinear form \texorpdfstring{$b$}{b} in a tensor product setting} \label{subsec:integral_representation_b}
Let $I_{N_t} := \{\tau_k\}_{k=1}^{N_t}$ be a partition of the time interval $(0,T)$ into $N_t$ pairwise disjoint, possibly non-uniform, open intervals $\tau_k = (t_{k-1},t_k)$. Define the space 
\begin{equation} \label{eq:def_c1_pw}
  C^1_\mathrm{pw}(I_{N_t}) := \{ \varphi \in L^\infty(0,T) : \varphi|_{\tau_k} \in C^1(\tau_k), (\varphi|_{\tau_k})' \in L^\infty(\tau_k) \}.
\end{equation}
The tensor product space $(L^\infty(\Gamma) \cap H^{1/2}(\Gamma)) \otimes C^1_\mathrm{pw}(I_{N_t})$ is a subspace of~$H^{1/2,1/4}(\Sigma)$. For functions in this subspace the bilinear form $b$ admits the following representation.
\begin{theorem} \label{thm:second_bilinear_form_tensor_case}
  Let $\varphi \in (L^\infty(\Gamma) \cap H^{1/2}(\Gamma)) \otimes C^1_\mathrm{pw}(I_{N_t})$ and $\psi \in H^{1/2,1/4}(\Sigma) \cap L^\infty(\Sigma)$. Then
  \begin{align} \label{eq:second_bilinear_form_tensor_case}
    b(\varphi,\psi) = \sum_{k=1}^{N_t} \int_{t_{k-1}}^{t_k} \int_\Gamma \psi(\vx,t) \vn(\vx)\cdot \Bigg[ 
      &-\int_0^{t_{k-1}} \int_\Gamma \frac{\partial G_\alpha}{\partial\tau}(\vx-\vy,t-\tau) \varphi(\vy,\tau)\vn(\vy)
        \,\dif\vs_\vy\,\dif\tau \notag \\
      &+ \int_\Gamma G_\alpha(\vx-\vy,t-t_{k-1}) \varphi(\vy,t_{k-1}{+})\vn(\vy)\,\dif\vs_\vy \\
      &+ \int_{t_{k-1}}^{t} \int_\Gamma 
        G_\alpha(\vx-\vy,t-\tau)\frac{\partial\varphi}{\partial\tau}(\vy,\tau) \vn(\vy)
        \,\dif\vs_\vy\,\dif\tau \Bigg] \,\dif\vs_\vx\,\dif t, \notag
  \end{align}
  where $\varphi(\cdot,t_{k-1}{+})$ denotes the right limit of $\varphi$ with respect to time in $t_{k-1}$. In particular, all occurring integrands are Lebesgue integrable on the respective integration domains.
\end{theorem}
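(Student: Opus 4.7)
The plan is to establish \eqref{eq:second_bilinear_form_tensor_case} first on the dense subset $\dtrace(C^\infty_c(\mathbb{R}^3\times(0,T)))$ of $H^{1/2,1/4}(\Sigma)$, where the distributional definition \eqref{eq:def_b} of $b$ reduces to the classical integral
\begin{equation*}
  b(\varphi,\psi) = -\int_0^T \int_\Gamma V(\varphi\vn)(\vx,t)\cdot\vn(\vx)\,\frac{\partial\psi}{\partial t}(\vx,t)\,\dif\vs_\vx\,\dif t,
\end{equation*}
and then extend to the whole class of admissible $\psi$ by density. By bilinearity and the finiteness of the tensor product it suffices to treat a single factor $\varphi(\vy,\tau) = \varphi_\vx(\vy)\varphi_t(\tau)$ with $\varphi_\vx \in L^\infty(\Gamma)\cap H^{1/2}(\Gamma)$ and $\varphi_t \in C^1_\mathrm{pw}(I_{N_t})$. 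The first move is integration by parts in $t$; the boundary terms at $t=0$ and $t=T$ vanish because $V(\varphi\vn)(\cdot,0)=\vzero$ and the extension $\widetilde\psi$ has compact support in $(0,T)$. This reduces the task to computing the classical derivative $\partial_t V(\varphi\vn)(\vx,t)$ on each open subinterval $\tau_j=(t_{j-1},t_j)$.

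For $t\in\tau_j$ I would split the convolution as $\int_0^t = \int_0^{t_{j-1}} + \int_{t_{j-1}}^t$ and treat the two pieces separately. On the first piece one has $t-\tau\geq t-t_{j-1}>0$, the kernel is smooth, and differentiation under the integral combined with the antisymmetry $\partial_t G_\alpha = -\partial_\tau G_\alpha$ produces the first summand on the right of \eqref{eq:second_bilinear_form_tensor_case}. On the second piece a naive Leibniz step is illegal because the kernel is singular at $\tau=t$; the key trick is the substitution $\sigma=t-\tau$, which moves the $t$-dependence out of the kernel and into the upper limit $t-t_{j-1}$ and the smooth slot of $\varphi$ (using $\varphi_t\in C^1(\tau_j)$). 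Standard Leibniz then applies, and substituting $\tau=t-\sigma$ back yields exactly the remaining two summands of \eqref{eq:second_bilinear_form_tensor_case}, namely the subinterval-initial term $\int_\Gamma G_\alpha(\vx-\vy,t-t_{j-1})\varphi(\vy,t_{j-1}{+})\vn(\vy)\,\dif\vs_\vy$ and the volume term carrying $\partial_\tau\varphi$. Summing over $j$ proves the identity for smooth $\psi$.

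A routine but necessary check is that the bracketed integrand $F(\vx,t)$ in \eqref{eq:second_bilinear_form_tensor_case} lies in $L^1(\Sigma)$. The second and third kernels are $G_\alpha$ itself and are weakly singular, so standard heat-kernel estimates together with $\varphi\in L^\infty(\Sigma)$ give the bound. For the first kernel $\partial_\tau G_\alpha(\vx-\vy,t-\tau)$ the integration is restricted to $\tau\leq t_{k-1}<t$, which keeps $t-\tau$ away from zero; the pointwise estimate $|\partial_\tau G_\alpha(\vz,s)|\leq c\,s^{-5/2}(1+|\vz|^2/s)\exp(-|\vz|^2/(4\alpha s))$ then reduces integrability to an elementary computation.

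The final step is to pass from smooth $\psi$ to general $\psi\in H^{1/2,1/4}(\Sigma)\cap L^\infty(\Sigma)$ by a truncation-and-mollification argument yielding a sequence $\psi_n\in\dtrace(C^\infty_c(\mathbb{R}^3\times(0,T)))$ with $\psi_n\to\psi$ in $H^{1/2,1/4}(\Sigma)$, $\{\psi_n\}$ uniformly bounded in $L^\infty(\Sigma)$, and $\psi_n\to\psi$ pointwise almost everywhere. Continuity of $b$ on $H^{1/2,1/4}(\Sigma)\times H^{1/2,1/4}(\Sigma)$ handles the left-hand side; the right-hand side passes to the limit by dominated convergence with dominant $\|\psi_n\|_{L^\infty(\Sigma)}|F|$. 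The hardest step, I expect, is the Leibniz computation on the second piece, where the substitution $\sigma=t-\tau$ is essential and hinges on the piecewise $C^1$-regularity of $\varphi_t$; this is precisely why the tensor-product hypothesis is needed. A secondary technical point is producing the approximating sequence $\{\psi_n\}$ that combines $H^{1/2,1/4}$-convergence with a uniform $L^\infty$-bound.
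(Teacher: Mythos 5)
Your proposal takes a genuinely different route from the paper. The paper never differentiates $V(\varphi\vn)$ on $\Sigma$ itself: it first proves an analogous formula on interior surfaces $\Sigma_m=\Gamma_m\times(0,T)$ with $\overline\Omega_m\subset\Omega$ (Lemma~\ref{lemma:second_bilinear_form_aux_boundary}), where the kernel $G_\alpha(\vx-\vy,\cdot)$ is genuinely smooth because $\dist(\vx,\Gamma)>0$, so the Leibniz step in \eqref{eq:time_derivative_slp} is clean. It then passes to the limit $m\to\infty$ using the smooth approximating domains of Theorem~\ref{thm:approximating_domains}, local Lipschitz charts, and a dominated-convergence argument. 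The uniform $L^\infty$-bound needed for domination is provided by choosing $\psi_m=\widetilde\psi|_{\Sigma_m}$ with $\widetilde\psi$ the caloric extension of $\psi$ via \eqref{eq:caloric_extension_psi}, and by the extended parabolic maximum principle of Theorem~\ref{thm:extended_maximum_principle}, which the authors prove precisely for this purpose; non-tangential convergence (Theorem~\ref{thm:heat_solvability_l2}~\emph{(iv)}) supplies pointwise a.e.\ convergence $\psi_m(\Lambda_m(\vx),t)\to\psi(\vx,t)$. Your approach eliminates all of that machinery at the cost of doing the Leibniz differentiation directly on $\Gamma$, via the substitution $\sigma=t-\tau$; this is an appealing shortcut, and the substitution idea is indeed the right way to move the $t$-dependence out of the singular kernel.

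That said, there are gaps that your sketch does not close. The $t$-integration by parts in your first step presupposes, for a.e.\ $\vx\in\Gamma$, that $t\mapsto V(\varphi\vn)(\vx,t)$ is absolutely continuous on $(0,T)$ (or at least piecewise absolutely continuous with no jumps across the $t_{j-1}$), with weak derivative given by the bracket $F(\vx,t)$; this is essentially the content of what you then compute, so the logical order needs care to avoid circularity. The Leibniz step itself, after the substitution, still concerns the boundary trace: the inner integral $\sigma\mapsto\int_\Gamma G_\alpha(\vx-\vy,\sigma)\varphi(\vy,t-\sigma)\vn(\vy)\,\dif\vs_\vy$ behaves like $\sigma^{-1/2}$ as $\sigma\to0^+$ (cf.~\eqref{eq:divergent_part_I_eps}), and the extended Leibniz rule with variable upper limit $a(t)=t-t_{j-1}$ requires joint continuity of the inner integral near $\sigma=a(t)$ plus a dominating $L^1$-bound for the $t$-derivative that is uniform in $t$; these are obtainable but must be spelled out, since they are precisely the kind of manipulations the paper explicitly warns are ``not easily justified.'' Finally, the approximation at the end — producing $\psi_n\in\dtrace(C^\infty_c(\mathbb{R}^3\times(0,T)))$ converging to $\psi$ in $H^{1/2,1/4}(\Sigma)$, pointwise a.e., \emph{and} uniformly bounded in $L^\infty(\Sigma)$ — is itself non-trivial on a Lipschitz surface and is not a standard packaged lemma in the paper; the paper sidesteps it by using the caloric extension and the maximum principle instead of a mollification. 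In short, the strategy is a plausible alternative, and genuinely shorter if the Leibniz chain on $\Gamma$ and the density argument are made rigorous, but as written those two steps are asserted rather than proved.
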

The representation \eqref{eq:second_bilinear_form_tensor_case} is tailored to boundary element methods with tensor product spaces used for discretization. The assumptions on $\varphi$ and $\psi$ are satisfied in this context. For example, let $\Gamma_h$ be a triangulation of a polyhedral boundary and $\Sigma_h = \Gamma_h \times I_{N_t}$. Then $\varphi, \psi \in S_{h_x,h_t}^{1 \otimes 0}(\Sigma_h)$ satisfy the assumptions of Theorem~\ref{thm:second_bilinear_form_tensor_case} and thus
\begin{equation}
  \begin{split}
    b(\varphi,\psi) = \sum_{k=1}^{N_t} \int_{t_{k-1}}^{t_k} \int_{\Gamma_h} \psi(\vx,t) \vn(\vx)\cdot \Bigg[ 
      &-\int_0^{t_{k-1}} \int_{\Gamma_h} \frac{\partial G_\alpha}{\partial\tau}(\vx-\vy,t-\tau) \varphi(\vy,\tau)\vn(\vy)
        \,\dif\vs_\vy\,\dif\tau \\
      &+ \int_{\Gamma_h} G_\alpha(\vx-\vy,t-t_{k-1}) \varphi(\vy,t_{k-1}{+})\vn(\vy)\,\dif\vs_\vy \Bigg] \,\dif\vs_\vx\,\dif t.
  \end{split}
\end{equation}
The second term in this representation is similar to what we have formally derived in \eqref{eq:formal_primitive}, so Theorem~\ref{thm:second_bilinear_form_tensor_case} justifies the calculations found in the literature. In \cite{ZapWatOfMer2021} it is described in detail how to deal with the remaining integrals.

We conclude this section with a proof of Theorem~\ref{thm:second_bilinear_form_tensor_case}. The strategy is similar to the one in the proof of Theorem~\ref{thm:general_ibp_formula}. We show a corresponding result on a sequence of auxiliary boundaries~$\{\Sigma_m\}_m$ first in Lemma \ref{lemma:second_bilinear_form_aux_boundary}, and then prove convergence when taking the limit with respect to $m$. 
\begin{lemma} \label{lemma:second_bilinear_form_aux_boundary}
  Let $\Omega_m$ be a Lipschitz domain with boundary $\Gamma_m$ such that $\overline{\Omega}_m \subset \Omega$. Let ${\Sigma_m = \Gamma_m \times (0,T)}$, $\varphi \in (L^\infty(\Gamma) \cap H^{1/2}(\Gamma)) \otimes C^1_\mathrm{pw}(I_{N_t})$ and $\psi_m \in H^{1/2,1/4}(\Sigma_m)$. Then
  \begin{equation} \label{eq:second_bilinear_form_aux_boundary}
    \begin{split}
      \bigg\langle &\nvecm\cdot \dtracem \left(\frac{\partial}{\partial t} \widetilde{V}(\varphi \vn ) \right), \psi_m\bigg\rangle_{\Sigma_m}\\ 
      &= \sum_{k=1}^{N_t} \int_{t_{k-1}}^{t_k} \int_{\Gamma_m} \psi_m(\vx,t) \vn_m(\vx)\cdot \Bigg[ 
      -\int_0^{t_{k-1}} \int_\Gamma \frac{\partial G_\alpha}{\partial\tau}(\vx-\vy,t-\tau) \varphi(\vy,\tau)\vn(\vy)
        \,\dif\vs_\vy\,\dif\tau \\
      &\qquad \qquad + \int_\Gamma G_\alpha(\vx-\vy,t-t_{k-1}) \varphi(\vy,t_{k-1}{+})\vn(\vy)\,\dif\vs_\vy \\
      &\qquad \qquad + \int_{t_{k-1}}^{t} \int_\Gamma 
        G_\alpha(\vx-\vy,t-\tau)\frac{\partial\varphi}{\partial\tau}(\vy,\tau) \vn(\vy)
        \,\dif\vs_\vy\,\dif\tau \Bigg] \,\dif\vs_\vx\,\dif t.
    \end{split}
  \end{equation}
\end{lemma}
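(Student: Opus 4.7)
The key observation is that $\overline{\Omega}_m\subset\Omega$ forces $\delta:=\dist(\Gamma_m,\Gamma)>0$, so the kernel $(\vx,\vy,s)\mapsto G_\alpha(\vx-\vy,s)$ is jointly smooth on $\Gamma_m\times\Gamma\times[0,T]$ (the Gaussian factor and all of its $s$-derivatives vanish super-polynomially as $s\to 0^+$ whenever $|\vx-\vy|\ge\delta$). Consequently $\widetilde V(\varphi\vn)$ is $C^\infty$ on an open neighbourhood of $\Sigma_m$ together with its time derivative, the trace $\dtracem$ of $\partial/\partial t\,\widetilde V(\varphi\vn)$ is simply its restriction, and the duality pairing on the left-hand side of \eqref{eq:second_bilinear_form_aux_boundary} collapses to the ordinary surface integral
\begin{equation*}
  \int_0^T\int_{\Gamma_m} \psi_m(\vx,t)\, \nvecm(\vx)\cdot \frac{\partial}{\partial t}\widetilde V(\varphi\vn)(\vx,t)\,\dif\vs_\vx\,\dif t.
\end{equation*}
It therefore suffices to verify that for every $\vx\in\Gamma_m$ and every $t\in\tau_k$ the classical derivative $\partial_t \widetilde V(\varphi\vn)(\vx,t)$ coincides with the bracketed expression appearing on the right-hand side of \eqref{eq:second_bilinear_form_aux_boundary}.

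To prove this pointwise identity we split the defining time integral at $t_{k-1}$. Setting $I(\vx,t,\tau):=\int_\Gamma G_\alpha(\vx-\vy,t-\tau)\varphi(\vy,\tau)\vn(\vy)\,\dif\vs_\vy$, we treat the contribution $\int_0^{t_{k-1}} I(\vx,t,\tau)\,\dif\tau$ of the completed intervals by differentiation under the integral (justified by uniform bounds on the smooth kernel and $\|\varphi\|_{L^\infty(\Sigma)}<\infty$) combined with the identity $\partial G_\alpha/\partial t = -\partial G_\alpha/\partial\tau$; this reproduces the first bracketed term. For the running integral $\int_{t_{k-1}}^t I(\vx,t,\tau)\,\dif\tau$ Leibniz's rule yields $I(\vx,t,t) + \int_{t_{k-1}}^t \partial_t I(\vx,t,\tau)\,\dif\tau$, and the boundary contribution $I(\vx,t,t)$ vanishes because $G_\alpha(\vx-\vy,0)=0$ pointwise whenever $|\vx-\vy|\ge\delta>0$. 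In the remaining integral we again replace $\partial_t$ by $-\partial_\tau$ and integrate by parts in $\tau\in(t_{k-1},t)$; this is legitimate because $\varphi\in(L^\infty(\Gamma)\cap H^{1/2}(\Gamma))\otimes C^1_{\mathrm{pw}}(I_{N_t})$ is $C^1$ in $\tau$ on $\tau_k$ with $L^\infty$ derivative and admits a right limit $\varphi(\vy,t_{k-1}+)$. The boundary term at $\tau=t^-$ again vanishes by $G_\alpha(\vx-\vy,0)=0$; the one at $\tau=t_{k-1}^+$ supplies the second bracketed term, and the integral against $\partial\varphi/\partial\tau$ supplies the third.

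Adding the two partial results gives the desired pointwise formula for $\partial_t\widetilde V(\varphi\vn)$. Pairing with $\psi_m\,\nvecm$, integrating over $\Gamma_m\times\tau_k$ and summing over $k=1,\dots,N_t$ produces \eqref{eq:second_bilinear_form_aux_boundary}; the Lebesgue integrability of every term on the right-hand side and the admissibility of Fubini's theorem follow immediately from $\delta>0$. The only delicate step in the whole argument is the vanishing of the two Leibniz boundary terms at $\tau=t$, and both reduce to the pointwise identity $G_\alpha(\vx-\vy,0)=0$, which we only have because $\vx\in\Gamma_m$ and $\vy\in\Gamma$ are strictly separated. This is precisely the obstruction that prevents a direct repetition of the argument on $\Sigma\times\Sigma$---where the kernel is not even locally integrable by Proposition~\ref{prop:kernel_not_integrable}---and hence motivates the additional limiting step that will be required to pass from this interior lemma to Theorem~\ref{thm:second_bilinear_form_tensor_case}.
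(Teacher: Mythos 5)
Your proof is correct and follows essentially the same route as the paper's: both rely on the separation $\dist(\Gamma_m,\Gamma)>0$ to justify the Leibniz rule and differentiation under the integral, use the identity $\partial G_\alpha/\partial t = -\partial G_\alpha/\partial\tau$, and integrate by parts in $\tau$ on the running interval $(t_{k-1},t)$ exploiting the piecewise $C^1$ structure of $\varphi$, with the off-diagonal vanishing $G_\alpha(\vx-\vy,0)=0$ killing the boundary terms at $\tau=t$. The only cosmetic difference is that you split the time integral before differentiating, whereas the paper differentiates first over the whole interval $(0,t)$ and then splits.
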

\begin{proof}
  For $\vx \in \Gamma_m \subset \Omega$ there holds 
  \begin{equation} \label{eq:time_derivative_slp}
    \begin{split}
      \frac{\partial}{\partial t} \widetilde{V} (\varphi \vn ) (\vx, t) 
        &= \frac{\partial}{\partial t} \left( \int_0^t \int_\Gamma 
          G_\alpha( \vx-\vy,t-\tau) \varphi(\vy,\tau) \vn(\vy) \,\dif\vs_\vy\,\dif\tau \right) \\
        &= \int_0^t \int_\Gamma \frac{\partial G_\alpha}{\partial t}( \vx-\vy,t-\tau) 
          \varphi(\vy,\tau) \vn(\vy) \,\dif\vs_\vy\,\dif\tau, 
    \end{split}
  \end{equation}
  where we used the smoothness of $G_\alpha$ away from zero to apply the Leibniz integral rule and that $G_\alpha( \vx-\vy,0) = 0$ for all $\vy \in \Gamma$. Next, we use the identity $\partial G_\alpha / \partial t(\vx-\vy,t-\tau) = -\partial G_\alpha / \partial \tau(\vx-\vy,t-\tau)$ for the integrand.
  For a given $t \in (0,T)$ let $k$ be such that $t \in (t_{k-1},t_{k}]$, i.e.~the interval $\tau_{k}$ of the partition $I_{N_t}$ including its right endpoint. Then we can split up the temporal integral in \eqref{eq:time_derivative_slp} into integrals over $(0,t_{k-1})$ and $(t_{k-1},t)$. For the latter we get 
  \begin{align}
    &- \int_{t_{k-1}}^t \int_\Gamma \frac{\partial G_\alpha}{\partial\tau}( \vx-\vy,t-\tau) 
      \varphi(\vy,\tau) \vn(\vy) \,\dif\vs_\vy\,\dif\tau \label{eq:time_derivative_slp_ibp_time} \\
    & = \int_\Gamma G_\alpha(\vx-\vy,t-t_{k-1}) \varphi(\vy,t_{k-1}{+}) \vn(\vy)\, \dif\vs_y
    + \int_{t_{k-1}}^t \int_\Gamma G_\alpha(\vx-\vy,t-\tau) \frac{\partial \varphi}{\partial \tau}(\vy,\tau) \vn(\vy) 
        \,\dif\vs_\vy\,\dif\tau, \notag
  \end{align}
  using integration by parts, which is possible since $(\tau \mapsto \varphi(\vy,\tau)) \in C^1_\mathrm{pw}(I_{N_t})$ for almost all $\vy \in \Gamma$, and that $G_\alpha( \vx-\vy,0) = 0$ for all $\vy \in \Gamma$. Finally, we can rewrite the bilinear form
  \begin{align*}
    \left\langle \nvecm\cdot \dtracem \left( \frac{\partial}{\partial t} \widetilde{V} (\varphi \vn ) \right), \psi_m \right\rangle_{\Sigma_m} 
    = \sum_{k=1}^{N_t} \int_{t_{k-1}}^{t_k} &\int_{\Gamma_m} \psi_m(\vx,t) \vn_m(\vx)\cdot \frac{\partial}{\partial t} \widetilde{V} (\varphi \vn ) (\vx, t) \,\dif\vs_\vx\,\dif t
  \end{align*}
  and use \eqref{eq:time_derivative_slp} with $\partial G_\alpha/\partial t (\vx-\vy,t-\tau) = -\partial G_\alpha/\partial \tau(\vx - \vy,t-\tau)$, the splitting of the temporal integral and \eqref{eq:time_derivative_slp_ibp_time} to complete the proof of the lemma.
\end{proof}
\begin{proof}[Proof of Theorem~\ref{thm:second_bilinear_form_tensor_case}]
  Let $\{\Omega_m\}_m$, $\{\Lambda_m\}_m$ and $\{\omega_m\}_m$ be given as in Theorem~\ref{thm:approximating_domains}, $\Gamma_m := \partial \Omega_m$ and $\Sigma_m := \Gamma_m \times (0,T)$. For $\psi \in H^{1/2,1/4}(\Sigma) \cap L^\infty(\Sigma)$ let $\widetilde{\psi}$ be the unique solution of the initial boundary value problem~\eqref{eq:heat_equation}--\eqref{eq:dirichlet_boundary_values} with Dirichlet datum $\psi$. Note that $\widetilde{\psi}$ admits the representation 
  \begin{equation} \label{eq:caloric_extension_psi}
    \widetilde{\psi} = W ( (-1/2\, I + K)^{-1} \psi),  
  \end{equation}
  cf.~Theorem~\ref{thm:heat_solvability_l2}. In particular, $\widetilde{\psi} \in C^\infty(Q)$ and thus we can define $\psi_m := \widetilde{\psi}|_{\Sigma_m}$ in the classical sense. The idea of the proof is to show that for this choice of $\Sigma_m$ and $\psi_m$ the terms on the right-hand side of~\eqref{eq:second_bilinear_form_aux_boundary} converge to the respective terms of~\eqref{eq:second_bilinear_form_tensor_case} in the limit $m\rightarrow\infty$.

  We start with the first term on the right-hand side of \eqref{eq:second_bilinear_form_aux_boundary}. By transforming the integral over $\Gamma_m$ into an integral over $\Gamma$ we get
  \begin{equation}
    \begin{split}
      \sum_{k=1}^{N_t} &\int_{t_{k-1}}^{t_k} \int_{\Gamma_m} \int_0^{t_{k-1}} \int_\Gamma 
      \psi_m(\vx,t)\nvecm(\vx)\cdot\, \frac{\partial G_\alpha}{\partial\tau}(\vx-\vy,t-\tau) \varphi(\vy,\tau)\vn(\vy)
      \,\dif\vs_\vy\,\dif\tau\,\dif\vs_\vx\,\dif t \\
    &= \sum_{k=1}^{N_t} \int_{t_{k-1}}^{t_k} \int_{\Gamma} \int_0^{t_{k-1}} \int_\Gamma 
      \vh_m(\Lambda_m(\vx),\vy,t,\tau)\cdot \vf(\vy,\tau) \omega_m(\vx) \,\dif\vs_\vy\,\dif\tau\,\dif\vs_\vx\,\dif t,
      \label{eq:second_bilinearform_aux_boundary_transformed}
    \end{split}
  \end{equation}
  where we introduced the functions
  \begin{align}
    \vh_m(\vx,\vy,t,\tau) &:= \frac{\partial G_\alpha}{\partial\tau}(\vx-\vy,t-\tau) \label{eq:aux_def_h_m}
      \psi_m(\vx,t)\nvecm(\vx),\\ 
    \vf(\vy,\tau) &:= \varphi(\vy,\tau)\vn(\vy) \label{eq:aux_def_f}
  \end{align}
  and used the homeomorphism $\Lambda_m: \Gamma \rightarrow \Gamma_m$ and \emph{(iv)} of Theorem~\ref{thm:approximating_domains}. Let $\{Z_j\}_{j=1}^J$ be a finite family of coordinate cylinders covering~$\Gamma$ as in \emph{(ii)} of Theorem~\ref{thm:approximating_domains}, see also Definition~\ref{def:lipschitz_domain}. We split the outer spatial integrals in~\eqref{eq:second_bilinearform_aux_boundary_transformed} into segments 
  \begin{equation*}
    U_j:=\Gamma \cap Z_j
  \end{equation*}
  related to these coordinate cylinders. For this purpose, let $\{\Phi_j\}_{j=1}^J$ be a smooth partition of unity on $\Gamma$ subordinate to $\{Z_j\}_{j=1}^J$, i.e.~$\Phi_j \in C^\infty(\mathbb{R}^3)$, $\supp(\Phi_j) \subset Z_j$, $0 \leq \Phi_j \leq 1$ and $\sum_j \Phi_j(\vx)=1$ for all~$\vx \in \Gamma$. Then \eqref{eq:second_bilinearform_aux_boundary_transformed} is further equal to
  \begin{equation} \label{eq:second_bilinearform_aux_boundary_splitted}
    \sum_{k=1}^{N_t} \sum_{j=1}^J \int_{t_{k-1}}^{t_k} \int_{U_j} \int_0^{t_{k-1}} \int_\Gamma 
      \Phi_j(\vx) \vh_m(\Lambda_m(\vx),\vy,t,\tau)\cdot \vf(\vy,\tau) \omega_m(\vx) 
      \,\dif\vs_\vy\,\dif\tau\,\dif\vs_\vx\,\dif t.
  \end{equation}
  In particular, the convergence of the first term in \eqref{eq:second_bilinear_form_aux_boundary} to the first term in~\eqref{eq:second_bilinear_form_tensor_case} follows if we can show that
  \begin{equation} \label{eq:limit_first_term_piecewise}
    \begin{split}
      \lim_{m \rightarrow \infty}\Bigg( \int_{t_{k-1}}^{t_k} \int_{U_j} &\int_0^{t_{k-1}} \int_\Gamma 
        \Phi_j(\vx) \vh_m(\Lambda_m(\vx),\vy,t,\tau)\cdot \vf(\vy,\tau) \omega_m(\vx) 
        \,\dif\vs_\vy\,\dif\tau\,\dif\vs_\vx\,\dif t \Bigg) \\
      =\int_{t_{k-1}}^{t_k} \int_{U_j} &\int_0^{t_{k-1}} \int_\Gamma 
      \Phi_j(\vx) \vh(\vx,\vy,t,\tau)\cdot \vf(\vy,\tau) \,\dif\vs_\vy\,\dif\tau\,\dif\vs_\vx\,\dif t
    \end{split}
  \end{equation}
  for all $k \in \{1,\dots,N_t\}$ and $j \in \{1,\dots,J\}$, where 
  \begin{equation*}
    \vh(\vx,\vy,t,\tau) := \frac{\partial G_\alpha}{\partial\tau}(\vx-\vy,t-\tau) 
      \psi(\vx,t)\vn(\vx).
  \end{equation*}
  For this purpose, we split the inner integral of $\vy$ over $\Gamma$ into an integral over 
  \begin{equation*}
    V_j := 3Z_j \cap \Gamma  
  \end{equation*}
  and one over the remainder $\Gamma \backslash V_j$ and show the convergence of both parts using the classical dominated convergence theorem. 
  
  First, we consider
  \begin{equation} \label{eq:interior_integral_residual_y}
    \int_{t_{k-1}}^{t_k} \int_{U_j} \int_0^{t_{k-1}} \int_{\Gamma \backslash V_j} 
      \Phi_j(\vx) \vh_m(\Lambda_m(\vx),\vy,t,\tau)\cdot \vf(\vy,\tau) \omega_m(\vx) 
      \,\dif\vs_\vy\,\dif\tau\,\dif\vs_\vx\,\dif t
  \end{equation}
  and observe the pointwise convergence of $\omega_m(\vx) \vh_m(\Lambda_m(\vx),\vy,t,\tau)$ to $\vh(\vx,\vy,t,\tau)$ almost everywhere in the integration domain as $m \rightarrow \infty$. In fact, $\omega_m \rightarrow 1$ pointwise almost everywhere on $\Gamma$ by Theorem~\ref{thm:approximating_domains}~\emph{(iv)}. For the respective convergence of $\vh_m(\Lambda_m(\vx),\vy,t,\tau)$ in \eqref{eq:aux_def_h_m} to $\vh(\vx,\vy,t,\tau)$ we show the convergence of the individual terms. By~\emph{(iii)} of Theorem~\ref{thm:approximating_domains} we get that ${\nvecm(\Lambda_m(\vx)) \rightarrow \vn(\vx)}$ for almost all $\vx \in \Gamma$. Furthermore $\Lambda_m(\vx) \rightarrow \vx$ even uniformly on $\Gamma$, by~\emph{(i)}. Thus, ${\partial G_\alpha/\partial\tau(\Lambda_m(\vx)-\vy,t-\tau) \rightarrow \partial G_\alpha/\partial\tau(\vx-\vy,t-\tau)}$ for all $\vx, \vy \in \Gamma$ and $\tau < t$. Finally, we know that $\psi_m = \widetilde{\psi}|_{\Sigma_m}$ and that $\widetilde{\psi}$ given by~\eqref{eq:caloric_extension_psi} attains the Dirichlet boundary values $\psi$ on $\Sigma$ in the sense of non-tangential limits almost everywhere, see Theorem~\ref{thm:heat_solvability_l2}. Hence, $\psi_m(\Lambda_m(\vx),t) \rightarrow \psi(\vx,t)$ since $\Lambda_m(\vx)$ approaches $\vx$ non-tangentially by point~\emph{(i)} of Theorem~\ref{thm:approximating_domains}.

  It remains to find an integrable function that dominates the sequence of integrands. Obviously there holds $\Phi_j \leq 1$ on $U_j$ by construction and $|\vf(\vy,\tau)|\leq \|\varphi\|_{L^\infty(\Sigma)}$ almost everywhere on~$\Sigma$. By Theorem~\ref{thm:approximating_domains}~\emph{(iv)} we get in addition, that $\omega_m(\vx) \leq c^{-1}$ for a constant $c > 0$ independent of $m$ and all $\vx \in \Gamma$. Furthermore, $|\psi_m(\Lambda_m(\vx)) \nvecm(\Lambda_m(\vx))| \leq \|\psi\|_{L^\infty(\Sigma)}$ for almost all $\vx \in \Gamma$ by the extended maximum principle stated in Theorem~\ref{thm:extended_maximum_principle}. The only term left to bound is $\partial G_\alpha/\partial\tau(\Lambda_m(\vx)-\vy,t-\tau)$. Let $r$ and $h$ be the radius and height of the congruent cylinders ${Z_j}$ and let $\delta := \min(r,h)$. Due to~\eqref{eq:uniform_convergence_lambda_m} we can assume without loss of generality that $|\vx - \Lambda_m(\vx)| < \delta$ for all $\vx \in \Gamma$ and all $m$. Then we immediately get $\Lambda_m(\vx) \in 2Z_j$ for all $\vx \in U_j = \Gamma \cap Z_j$ and thus $|\Lambda_m(\vx) - \vy| > \delta$ for $\vx \in U_j$ and $\vy \in \Gamma \backslash V_j = \Gamma \backslash 3Z_j$. This allows us to bound
  \begin{equation*}
    \left|\frac{\partial G_\alpha}{\partial\tau}(\Lambda_m(\vx)-\vy,t-\tau)\right| \leq 
      \left\|\frac{\partial G_\alpha}{\partial t}\right\|_{L^\infty( (\overline{B_R(\vzero)}\backslash B_\delta(\vzero))
        \times [0,T])}
  \end{equation*}
  for all $\vx \in U_j$, $\vy \in \Gamma \backslash V_j$, and $0 < \tau < t < T$, where $B_R(\vzero)$ and $B_\delta(\vzero)$ are balls centered around the origin with radii $R$ and $\delta$, respectively, and $R$ is so large that $\overline{\Omega} \subset B_{R/2}(\vzero)$. The right-hand side in this estimate is bounded since $(\vr,t) \mapsto \partial G_\alpha / \partial t(\vr,t) \in C^\infty(\mathbb{R}^3\backslash \vzero \times [0,T])$. Altogether we have found the desired dominating function, as
  \begin{equation*}
    |\Phi_j(\vx) \vh_m(\Lambda_m(\vx),\vy,t,\tau)\cdot \vf(\vy,\tau) \omega_m(\vx)| \leq 
      \|\varphi\|_{L^\infty(\Sigma)} \|\psi\|_{L^\infty(\Sigma)} 
      \left\|\frac{\partial G_\alpha}{\partial t}\right\|_{L^\infty((\overline{B_R(\vzero)}\backslash B_\delta(\vzero))
        \times [0,T])}
  \end{equation*} 
  almost everywhere in the integration domain of \eqref{eq:interior_integral_residual_y}. Therefore, we have established the convergence of \eqref{eq:interior_integral_residual_y} to 
  \begin{equation} \label{eq:limit_interior_integral_residual_y}
    \int_{t_{k-1}}^{t_k} \int_{U_j} \int_0^{t_{k-1}} \int_{\Gamma \backslash V_j} 
      \Phi_j(\vx) \vh(\vx,\vy,t,\tau)\cdot \vf(\vy,\tau) 
      \,\dif\vs_\vy\,\dif\tau\,\dif\vs_\vx\,\dif t.
  \end{equation}

  To show \eqref{eq:limit_first_term_piecewise} we have to consider the remaining part
  \begin{equation} \label{eq:interior_integral_vj}
    \int_{t_{k-1}}^{t_k} \int_{U_j} \int_0^{t_{k-1}} \int_{V_j} 
      \Phi_j(\vx) \vh_m(\Lambda_m(\vx),\vy,t,\tau)\cdot \vf(\vy,\tau) \omega_m(\vx) 
      \,\dif\vs_\vy\,\dif\tau\,\dif\vs_\vx\,\dif t.
  \end{equation}
  We use the parametrization of the regions $\Lambda_m(U_j)$ and $V_j$ by the Lipschitz functions $\eta_j^{(m)}$ and $\eta_j$, respectively, established in Theorem~\ref{thm:approximating_domains}~\emph{(ii)}. This is possible, since $\Lambda_m(U_j) \subset \Gamma_m \cap 2Z_j$ as seen before. For the sake of simplicity we assume that the coordinates associated with the cylinder~$Z_j$ correspond to the original rectangular coordinates, i.e.~we neglect additional translations and rotations. Then \eqref{eq:interior_integral_vj} can be transformed into
  \begin{equation} \label{eq:interior_integral_vj_parametrized}
    \begin{split}
      \int_{t_{k-1}}^{t_k} \int_{B_{2r}(\vzero)} \int_0^{t_{k-1}} \int_{B_{3r}(\vzero)}
        \mathbbm{1}_{\mathcal{U}_j^{(m)}}(\hat{\vx})
        \Phi_j(\Lambda_m^{-1}&(\Gamma_m(\hat{\vx}))) 
        \vh_m( \Gamma_m(\hat{\vx}), \Gamma(\hat{\vy}), t, \tau )
        \\
        &\cdot \vf( \Gamma(\hat{\vy}), \tau ) g_m(\hat{\vx}) g(\hat{\vy})
        \, \dif\hat{\vx}\,\dif\tau\,\dif\hat{\vy}\,\dif t.
    \end{split}
  \end{equation} 
  where $\Gamma(\hat{\vy}):=(\hat{\vy}, \eta_j(\hat{\vy}))$, $\Gamma_m(\hat{\vx}):=(\hat{\vx}, \eta_j^{(m)}(\hat{\vx}))$, $r$ is still the radius of $Z_j$, $B_{3r}(\vzero)$ the parameter region of $V_j$, $\mathcal{U}_j^{(m)} \subset B_{2r}(\vzero)$ the parameter region of $\Lambda_m(U_j)$ and $g_m(\hat{\vx})$ and $g(\hat{\vy})$ denote the surface elements given by
  \begin{equation*}
    g_m(\hat{\vx}) = \sqrt{1 + |\nabla \eta_j^{(m)}(\hat{\vx})|^2}, \qquad 
    g(\hat{\vy}) = \sqrt{1 + |\nabla \eta_j(\hat{\vy})|^2}.
  \end{equation*}  
  We compute the limit of \eqref{eq:interior_integral_vj_parametrized} for $m \rightarrow \infty$ using the dominated convergence theorem again. 
  
  First we show that the integrand in \eqref{eq:interior_integral_vj_parametrized} converges to 
  \begin{equation} \label{eq:interior_integrand_vj_parametrized_limit}
    \mathbbm{1}_{B_r(\vzero)}(\hat{\vx}) \Phi_j(\Gamma(\hat{\vx}) )
    \vh(\Gamma(\hat{\vx}),\Gamma(\hat{\vy}), t, \tau)\cdot 
    \vf( \Gamma(\hat{\vy}), \tau ) g(\hat{\vx}) g(\hat{\vy})
  \end{equation}
  for almost all $\hat{\vx}$, $\hat{\vy}$, $t$ and $\tau$ in the integration domain as $m \rightarrow \infty$. Since ${\Gamma_m(\hat{\vx}) \neq \Lambda_m(\Gamma(\hat{\vx}))}$ in general, we cannot use the previous results about pointwise convergence to show this. Instead, we consider again all terms of the integrand in \eqref{eq:interior_integral_vj_parametrized} depending on $m$ separately to show pointwise convergence. Recall the definition of $h_m$ in \eqref{eq:aux_def_h_m} for this purpose. From Theorem~\ref{thm:approximating_domains}~\emph{(ii)} we know that~$\eta_j^{(m)}$ converges uniformly to $\eta_j$. As a result, $\Gamma_m(\hat{\vx}) \rightarrow \Gamma(\hat{\vx})$ for all $\hat{\vx} \in B_{2r}(\vzero)$ and thus $\partial G_\alpha / \partial \tau (\Gamma_m(\hat{\vx}),\Gamma(\hat{y}),t,\tau)$ converges to $\partial G_\alpha / \partial \tau (\Gamma(\hat{\vx}),\Gamma(\hat{y}),t,\tau)$ for all such $\hat{\vx}$, and~$\hat{\vy}$,~$t$ and~$\tau$ in the respective domains of integration. Since the convergence of~$\Gamma_m(\hat{\vx})$ to $\Gamma(\hat{\vx})$ is non-tangential it follows as before that $\psi_m(\Gamma_m(\hat{\vx}),t) \rightarrow \psi(\Gamma(\hat{\vx}),t)$ for almost all $\hat{\vx} \in B_{2r}(\vzero)$ and $t \in (t_{k-1},t_k)$. From Theorem~\ref{thm:approximating_domains}~\emph{(ii)} we know in addition that $\nabla \eta_j^{(m)}$ converges pointwise almost everywhere to $\nabla \eta_j$ in $\mathbb{R}^2$. This implies $g_m(\hat{\vx}) \rightarrow g(\hat{\vx})$ for almost all~$\hat{\vx}$ and furthermore the convergence of $\nvecm(\Gamma_m(\hat{\vx}))$ to $\vn(\Gamma(\hat{\vx}))$ for almost all $\hat{\vx} \in B_{2r}(\vzero)$, due to the representations 
  \begin{equation*}
    \nvecm(\Gamma_m(\hat{\vx})) = \frac{1}{g_m(\hat{\vx})} (\nabla \eta_j^{(m)}(\hat{\vx})^\top, -1)^\top, \qquad
    \vn(\Gamma(\hat{\vx})) = \frac{1}{g(\hat{\vy})} (\nabla \eta_j(\hat{\vx})^\top, -1 )^\top.  
  \end{equation*}
  
  Next we show the convergence of $\mathbbm{1}_{\mathcal{U}_j^{(m)}}$ to $\mathbbm{1}_{B_{r}(\vzero)}$ pointwise almost everywhere in $B_{2r}(\vzero)$. From~\eqref{eq:uniform_convergence_lambda_m} it follows that for all $\hat{\vx} \in B_{2r}(\vzero)$ and sufficiently small $\varepsilon > 0$ there exists an $m(\varepsilon)$ such that for all~$m>m(\varepsilon)$ 
  \begin{equation} \label{eq:preimage_of_cylinders}
    \Lambda_m^{-1}(\Gamma_m \cap Z_{\varepsilon/2}(\hat{\vx})) \subset (\Gamma \cap Z_{\varepsilon}(\hat{\vx})),
  \end{equation}
  where $Z_\rho(\hat{\vx}) := \{(\hat{\vxi},\hat{s}) \in \mathbb{R}^3 : |\hat{\vxi} - \hat{\vx}|<\rho, \hat{s} < h_j\}$ is the cylinder with center $(\hat{\vx},0)$, radius $\rho > 0$ and the same height $h_j$ as $Z_j$. If $\hat{\vx} \in B_r(\vzero)$ and $\varepsilon$ is so small that $(\Gamma \cap Z_{\varepsilon}(\hat{\vx})) \subset U_j$, \eqref{eq:preimage_of_cylinders} implies that $\Gamma_m(\hat{\vx}) \in \Lambda_m(U_j)$ for all $m > m(\varepsilon)$. This means that $\hat{\vx} \in \mathcal{U}_j^{(m)}$ and thus $\mathbbm{1}_{\mathcal{U}_j^{(m)}}(\hat{\vx}) = 1 = \mathbbm{1}_{B_{r}(\vzero)}(\hat{\vx})$ for all~$m > m(\varepsilon)$. Likewise, if $\hat{\vx} \in B_{2r}(\vzero) \backslash \overline{B_r(\vzero)}$ and $\varepsilon$ is so small that $(\Gamma \cap Z_{\varepsilon}(\hat{\vx})) \cap U_j = \emptyset$, we get ${\mathbbm{1}_{\mathcal{U}_j^{(m)}}(\hat{\vx}) = 0 = \mathbbm{1}_{B_{r}(\vzero)}(\hat{\vx})}
  $ for all $m > m(\varepsilon)$. Together this proves $\mathbbm{1}_{\mathcal{U}_j^{(m)}} \rightarrow \mathbbm{1}_{B_{r}(\vzero)}$ almost everywhere in $B_{2r}(\vzero)$.

  Finally we have to show that $\Phi_j(\Lambda_m^{-1}(\Gamma_m(\hat{\vx})))$ converges to $\Phi_j(\Gamma(\hat{\vx}) )$ in $B_{2r}(\vzero)$. Since $\Phi_j$ is continuous it suffices to show the convergence of its arguments. This follows again from~\eqref{eq:preimage_of_cylinders}. Indeed, for each $\varepsilon > 0$ we know by \eqref{eq:preimage_of_cylinders} that for all $m>m(\varepsilon)$ there exists a $\hat{\vy}_m$ such that $\Lambda_m^{-1}(\Gamma_m(\hat{\vx})) = \Gamma(\hat{\vy}_m)$
  and $|\hat{\vy}_m - \hat{\vx}| < \varepsilon$. As a consequence there holds
  \begin{equation*}
    |\Lambda_m^{-1}(\Gamma_m(\hat{\vx})) - \Gamma(\hat{\vx})| = |\Gamma(\hat{\vy}_m) - \Gamma(\hat{\vx})| 
    = |(\hat{\vy}_m,\eta_j(\hat{\vy}_m)) - (\hat{\vx},\eta_j(\hat{\vx}))| \leq \varepsilon \sqrt{1+L_j^2},
  \end{equation*}
  where $L_j$ is the Lipschitz constant of $\eta_j$. This yields the desired convergence. In particular, we have shown that the integrand in \eqref{eq:interior_integral_vj_parametrized} converges pointwise almost everywhere to \eqref{eq:interior_integrand_vj_parametrized_limit}.

  For the application of the dominated convergence theorem we bound the integrands in \eqref{eq:interior_integral_vj_parametrized} uniformly by using the estimates
  \begin{align*}
    |\mathbbm{1}_{\mathcal{U}_j^{(m)}}(\hat{\vx}) \Phi_j(\Lambda_m^{-1}(\Gamma_m(\hat{\vx})))| &\leq 1, \\
    |\varphi(\Gamma(\hat{\vy}),\tau)\vn(\Gamma(\hat{\vy}))| & \leq \|\varphi\|_{L^\infty(\Sigma)},\\
    |\psi_m(\Gamma_m(\hat{\vx}),t)\nvecm(\Gamma_m(\hat{\vx}))| & \leq \|\psi\|_{L^\infty(\Sigma)}, \\
    |g(\hat{\vy})| &\leq \sqrt{1 + \|\nabla \eta_j\|_{L^\infty(\mathbb{R}^2)}}, \\
    |g_m(\hat{\vx})|&\leq \sqrt{1 + \|\nabla \eta_j^{(m)}\|_{L^\infty(\mathbb{R}^2)}} \leq 
      \sqrt{1 + \|\nabla \eta_j\|_{L^\infty(\mathbb{R}^2)}}
  \end{align*}
  for almost all $\hat{\vx}$, $\hat{\vy}$, $t$ and $\tau$ in the integration domain. The first estimate is clear by definition. The second one holds true due to the assumption that $\varphi \in L^\infty(\Sigma)$. The third estimate is again a consequence of the parabolic maximum principle in Theorem~\ref{thm:extended_maximum_principle}. The last two estimates follow from the definition of the surface elements and the fact that $\|\nabla \eta_j^{(m)}\|_{L^\infty(\mathbb{R}^2)} \leq \|\nabla \eta_j\|_{L^\infty(\mathbb{R}^2)}$, see Theorem~\ref{thm:approximating_domains}~\emph{(ii)}. Therefore, the product of all these functions is bounded by a constant $C$ independent of $m$. The only term left to consider is the derivative of the heat kernel
  \begin{equation*}
    \frac{\partial G_\alpha}{\partial\tau}(\vx-\vy,t-\tau) =
    \left[ \frac{6\alpha(t-\tau) - |\vx -\vy|^2 }{(4 \alpha)^{5/2}\pi^{3/2}(t-\tau)^{7/2}} \right]
    \exp\bigg( -\frac{|\vx-\vy|^2}{4\alpha(t-\tau)} \bigg) \qquad \text{for } t > \tau.
  \end{equation*}
  By considering only the first part of the numerator we can estimate \cite[cf.~Chapter~13 \S3]{Pog1966}
  \begin{equation} \label{eq:est_part_time_derivative_heat_kernel}
    \begin{split}
      \frac{6\,\alpha \,s}{(4 \alpha)^{5/2}\pi^{3/2}s^{7/2}} \exp\bigg( -\frac{|\vr|^2}{4\alpha\,s} \bigg)
      = \left(\frac{|\vr|^2}{4\alpha s}\right)^{3/4} \exp \left( -\frac{|\vr|^2}{4\alpha s} \right)
        \frac{3}{2\, \pi^{3/2}(4\alpha)^{3/4} s^{7/4} |\vr|^{3/2}} &\\
       \leq \left(\frac{3}{4}\right)^{3/4} \exp(-3/4) \frac{3}{2\, \pi^{3/2}(4\alpha)^{3/4} s^{7/4} |\vr|^{3/2}}
      = c(\alpha) \frac{1}{s^{7/4}} \frac{1}{|\vr|^{3/2}} &
    \end{split}
  \end{equation}
  for $s=t-\tau>0$, where we used that $q^m \exp(-q) \leq m^m \exp(-m)$ for all $m, q \geq 0$. The expression corresponding to the second part of the numerator can be handled similarly and we end up with the estimate 
  \begin{equation} \label{eq:estimate_temporal_derivative_G_alpha}
    \left|\frac{\partial G_\alpha}{\partial \tau}(\Gamma_m(\hat{\vx}),\Gamma(\hat{\vy}),t,\tau)\right| 
    \leq \frac{\tilde{c}(\alpha)}{(t-\tau)^{7/4} |\Gamma_m(\hat{\vx})-\Gamma(\hat{\vy})|^{3/2}} 
    \leq \frac{\tilde{c}(\alpha)}{(t-\tau)^{7/4}|\hat{\vx}-\hat{\vy}|^{3/2}} 
  \end{equation}
  almost everywhere in the considered integration domain $B_{2r}(\vzero) \times (t_{k-1},t_k) \times B_{3r}(\vzero) \times (0,t_{k-1})$ in~\eqref{eq:interior_integral_vj_parametrized}. Since the function on the right-hand side of~\eqref{eq:estimate_temporal_derivative_G_alpha} is integrable on this domain we have found a function dominating the sequence of integrands. 
  
  By the dominated convergence theorem we get that \eqref{eq:interior_integral_vj_parametrized} converges to 
  \begin{align*}
    \int_{t_{k-1}}^{t_k} &\int_{B_{r}(\vzero)} \int_0^{t_{k-1}} \int_{B_{3r}(\vzero)}
      \Phi_j(\Gamma(\hat{\vx}) )
      \vh(\Gamma(\hat{\vx}),\Gamma(\hat{\vy}), t, \tau)\cdot 
      \vf( \Gamma(\hat{\vy}), \tau ) g(\hat{\vx}) g(\hat{\vy}) \, \dif\hat{\vx}\,\dif\tau\,\dif\hat{\vy}\,\dif t \\
    &= \int_{t_{k-1}}^{t_k} \int_{U_j} \int_0^{t_{k-1}} \int_{V_j} 
    \Phi_j(\vx) \vh(\vx,\vy,t,\tau)\cdot \vf(\vy,\tau) 
    \,\dif\vs_\vy\,\dif\tau\,\dif\vs_\vx\,\dif t.
  \end{align*}
  Together with the convergence of \eqref{eq:interior_integral_residual_y} to \eqref{eq:limit_interior_integral_residual_y} we conclude \eqref{eq:limit_first_term_piecewise}. 
  
  Recall that we wanted to show that all terms on the right-hand side of \eqref{eq:second_bilinear_form_aux_boundary} converge to the respective terms of \eqref{eq:second_bilinear_form_tensor_case}. Equation~\eqref{eq:limit_first_term_piecewise} implies the convergence of the first term. In particular, the integrand of the first term on the right-hand side of \eqref{eq:second_bilinear_form_tensor_case} is Lebesgue integrable in the corresponding integration domain, which is another consequence of the dominated convergence theorem. The remaining two terms can be handled analogously. For both terms one can transform the integrals of $\vx$ over~$\Gamma_m$ into integrals over $\Gamma$ as in \eqref{eq:second_bilinearform_aux_boundary_transformed} and split the integrals up as in \eqref{eq:second_bilinearform_aux_boundary_splitted}. The individual parts can then be handled as before by splitting the inner integral of $\vy$ over $\Gamma$ into integrals over~$V_j$ and $\Gamma \backslash V_j$. For both one applies the dominated convergence theorem where one uses the estimate
  \begin{equation*}
    |G_\alpha(\vr,t)|\leq c(\alpha) \frac{1}{t^{3/4}} \frac{1}{|\vr|^{3/2}},
  \end{equation*}
  which can be shown as in \eqref{eq:est_part_time_derivative_heat_kernel}, to bound the heat kernel and in addition the estimates
  \begin{align*}
    |\varphi(\vy,t_{k-1}{+})| &\leq \|\varphi(\cdot,t_{k-1}{+})\|_{L^\infty(\Gamma)}, \\ 
    \left|\frac{\partial\varphi}{\partial\tau}(\vy,\tau)\right| &\leq \left\|\frac{\partial\varphi}{\partial\tau}\right\|_{L^\infty(\Gamma \times (t_{k-1},t_k))}
  \end{align*}
  for $\vy \in \Gamma$ and $\tau \in (t_{k-1},t_k)$. Note that these estimates are reasonable due to the assumption that $\varphi \in (L^\infty(\Gamma) \cap H^{1/2}(\Gamma)) \otimes C^1_\mathrm{pw}(I_{N_t})$ with $C^1_\mathrm{pw}(I_{N_t})$ defined in \eqref{eq:def_c1_pw}.
\end{proof}

\section{Conclusion} \label{sec:conclusion}
The integration by parts formula for the bilinear form of the hypersingular boundary integral operator is a key result when it comes to its evaluation in Galerkin methods. In this paper we have provided a general version of this formula for the 3+1D transient heat equation in~\eqref{eq:general_ibp_formula} together with a rigorous proof, which was missing in the literature to the best of our knowledge. However, the general formula is not sufficient for the evaluation of the bilinear form for non-smooth functions since it includes the bilinear form $b(\cdot, \cdot)$ in \eqref{eq:def_b}, which is defined only as a continuous extension. We have shown that the usual interpretation of this bilinear form in the literature is problematic and provided a suitable alternative for certain types of functions including the typical tensor product discretization spaces. As a side result we have provided a proof of a generalization of the classical parabolic maximum principle in Theorem~\ref{thm:extended_maximum_principle}.

An alternative strategy for the discretization of the boundary integral operators for the heat equation is to discretize the space-time boundary $\Sigma$ with tetrahedral instead of tensor product meshes and to consider related discrete function spaces. The representation for the bilinear form~$b(\cdot,\cdot)$ provided in Section~\ref{sec:bf_time_derivative} is not suitable for such function spaces. An alternative representation might, however, be derived in a similar way as in that section.

\section*{Acknowledgements}
The authors acknowledge the support provided by the Austrian Science Fund (FWF) under the project I 4033-N32 in a joint project with the Czech Science Foundation (project 17-22615S) and the suggestion of the simpler version of the proof of Proposition~\ref{prop:kernel_not_integrable} from an anonymous reviewer.

\bibliographystyle{abbrvurl}
\bibliography{references}

\appendix
\section{Appendix}
\subsection{Lipschitz domains and their smooth approximation}
In this paper we use the following definition of Lipschitz domains similar to \cite[Definition~2.1]{Bro1989}.
\begin{definition}[Lipschitz domain] \label{def:lipschitz_domain}
  A set $Z_j \subset \mathbb{R}^3$ is called an \emph{open coordinate cylinder} with radius $r_j > 0$ and height $h_j > 0$ if there exist a rectangular coordinate system of $\mathbb{R}^3$ obtained from the standard Cartesian coordinate system by rotation and translation with corresponding coordinates $\vx^{(j)} \in \mathbb{R}^2$ and $s^{(j)} \in \mathbb{R}$ such that
  \begin{equation*}
    Z_j = \{ (\vx^{(j)},s^{(j)}) \in \mathbb{R}^3 : |\vx^{(j)}|<r_j, \quad |s^{(j)}| < h_j \}.
  \end{equation*}
  A bounded, connected, open subset $\Omega \subset \mathbb{R}^3$ is called a \emph{Lipschitz domain}, if there exists a finite family $\{Z_j\}_j$ of open coordinate cylinders covering $\Gamma := \partial \Omega$ and for each $j$ there exists a Lipschitz--continuous function $\eta_j : \mathbb{R}^2 \rightarrow \mathbb{R}$, i.e.~$|\eta_j(\vx)-\eta_j(\vy)| \leq L_j\, |\vx - \vy|$ for some $L_j \in \mathbb{R}$, such that $|\eta_j(\vx)| < h_j$ and
  \begin{equation} \label{eq:lipschitz_hypergraph}
    \begin{split}
      \Omega \cap Z_j &= \{ (\vx^{(j)},s^{(j)}) \in Z_j : s^{(j)} > \eta_j(\vx^{(j)}) \}, \\
      \Gamma \cap Z_j &= \{ (\vx^{(j)}, \eta_j(\vx^{(j)})) : \vx^{(j)} \in \mathbb{R}^2 \} \cap Z_j,
    \end{split}
  \end{equation}
  where $(\vx^{(j)},s^{(j)})$ denote the coordinates associated with $Z_j$ and $h_j$ its height.
\end{definition}
\begin{remark} \label{rem:dilatable_congruent_covering}
  For each Lipschitz domain $\Omega$ we can find a family of coordinate cylinders $\{Z_j\}_j$ covering $\Gamma$ as in Definition~\ref{def:lipschitz_domain} such that for all $j$ the dilated cylinder
  \begin{equation*}
    3 Z_j := \{ (\vx^{(j)},s^{(j)}) \in \mathbb{R}^3 : |\vx^{(j)}|<3r_j, |s^{(j)}|<3h_j  \}
  \end{equation*}
  satisfies \eqref{eq:lipschitz_hypergraph} too, see \cite[Definition~2.1]{Bro1989} and \cite[Section~0.2]{Ver1984}. Furthermore we can choose the cylinders to be congruent, i.e.~to have the same radii and heights. 
\end{remark}
The following result provides us with a sequence $\{\Omega_m\}_m$ of smooth domains approximating a given Lipschitz domain $\Omega$ from the inside. 
\begin{theorem}[\!\!{\cite[Lemma~2.2]{Bro1989}, \cite[Theorem~A.1]{Ver1982}, \cite[Theorem~1.12]{Ver1984}}] \label{thm:approximating_domains}
  Let $\Omega$ be a Lipschitz domain with boundary $\Gamma$. Then there exist sequences of $C^\infty$ domains $\{\Omega_m\}_m$, homeomorphisms $\{\Lambda_m\}_m$ and functions~$\{\omega_m\}_m$ that satisfy:
  \begin{enumerate}[(i)]
    \item $\overline{\Omega}_m \subset \Omega$ and the homeomorphisms $\Lambda_m: \Gamma \rightarrow \Gamma_m := \partial \Omega_m$ satisfy 
    \begin{equation} \label{eq:uniform_convergence_lambda_m}
      \lim_{m\rightarrow \infty}\left(\sup\{ |\vx - \Lambda_m(\vx)| : \vx \in \Gamma \} \right) = 0.
    \end{equation}
    In addition, $\Lambda_m(\vx)$ approaches $\vx$ non-tangentially.
    \item There exists a finite family of coordinate cylinders $\{Z_j\}_j$ covering $\Gamma$ as in Remark~\ref{rem:dilatable_congruent_covering} and associated Lipschitz functions $\{\eta_j\}_j$ such that for each $j$ and $m$ there exists a function $\eta_j^{(m)} \in C^\infty(\mathbb{R}^2)$ that represents $\Gamma_m$ in $3Z_j$, i.e.
    \begin{equation*}
      \Gamma_m \cap 3 Z_j = \{ (\vx^{(j)}, \eta_j^{(m)}(\vx^{(j)})) : \vx^{(j)} \in \mathbb{R}^2 \} \cap 3 Z_j.
    \end{equation*} 
    Furthermore, $\eta_j^{(m)} \rightarrow \eta_j$ uniformly and $\nabla \eta_j^{(m)} \rightarrow \nabla \eta_j$ pointwise almost everywhere as $m$ tends to infinity, and $\|\eta_j^{(m)}\|_{L^\infty(\mathbb{R}^2)} \leq \|\eta_j\|_{L^\infty(\mathbb{R}^2)}$ for all $m$.
    \item The normal vectors $\nvecm$ on $\Gamma_m$ convergence pointwise almost everywhere to the normal vector $\vn$ on $\Gamma$, in the sense that $\nvecm(\Lambda_m(\vx)) \rightarrow \vn(\vx)$ for almost all $\vx \in \Gamma$ as $m$ tends to infinity. 
    \item The functions $\omega_m: \Gamma \rightarrow \mathbb{R}_{>0}$ are such that 
    \begin{equation*}
      \int_{E} \omega_m(\vx) \dif\vs_\vx = \int_{\Lambda_m(E)} \dif\vs_\vy
    \end{equation*}
    for all measurable sets $E \subset \Gamma$, where $\dif\vs_\vx$ and $\dif\vs_\vy$ denote the surface measures on $\Gamma$ and~$\Gamma_m$, respectively. Furthermore, there exists a constant $c>0$ such that ${c \leq \omega_m \leq c^{-1}}$ for all $m$ and $\omega_m \rightarrow 1$ pointwise almost everywhere as $m$ tends to infinity.
  \end{enumerate}
\end{theorem}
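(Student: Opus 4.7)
The plan is to construct the approximating interior domains by mollifying the Lipschitz graphs that locally represent $\Gamma$ in the coordinate cylinders of Definition~\ref{def:lipschitz_domain}, then patching the smoothed local pieces into a single $C^\infty$ surface strictly enclosed in $\Omega$. This follows the classical construction due to Verchota, and the four enumerated conclusions (i)--(iv) can then be read off from the mollification estimates.

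\emph{Local smoothing.} Cover $\Gamma$ by a finite family of congruent coordinate cylinders $\{Z_j\}_j$ as in Remark~\ref{rem:dilatable_congruent_covering} with corresponding Lipschitz graph functions $\eta_j : \mathbb{R}^2 \to \mathbb{R}$ of Lipschitz constants $L_j$ and with $|\eta_j| < h_j$. Let $\rho_{\varepsilon_m}$ be a standard spatial mollifier with $\varepsilon_m \to 0$. Mollify via $\eta_j^{(m)} := \rho_{\varepsilon_m} \ast \eta_j + \delta_m$, where $\delta_m > 0$ is a shift slightly larger than $\|\rho_{\varepsilon_m}\ast \eta_j - \eta_j\|_{L^\infty}$, followed if needed by a truncation to guarantee $|\eta_j^{(m)}| \leq \|\eta_j\|_{L^\infty}$. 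Classical mollification estimates, together with Rademacher's theorem (which makes $\eta_j$ differentiable almost everywhere with $|\nabla\eta_j| \leq L_j$), then yield the uniform convergence $\eta_j^{(m)} \to \eta_j$, the pointwise almost-everywhere convergence $\nabla\eta_j^{(m)} \to \nabla\eta_j$, and the uniform bound $\|\nabla\eta_j^{(m)}\|_{L^\infty} \leq L_j$. The shift $\delta_m$ enforces $\eta_j^{(m)} > \eta_j$, so that locally within $Z_j$ the region $\{s^{(j)} > \eta_j^{(m)}(\vx^{(j)})\}$ is strictly contained in $\Omega \cap Z_j$.

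\emph{Global patching and homeomorphism.} The local smooth surfaces do not coincide on overlaps, so a single smooth surface must be assembled. I would construct a $C^\infty$ defining function $F_m$ in a neighborhood of $\Gamma$ via a smooth partition of unity $\{\chi_j\}$ subordinate to $\{Z_j\}$ applied to the local signed-distance-like functions $s^{(j)} - \eta_j^{(m)}(\vx^{(j)})$ (expressed in the rotated coordinates of each $Z_j$), and set $\Omega_m := \{F_m > c_m\}$ for a small $c_m > 0$. By Sard's theorem, almost every level of $F_m$ is regular, producing a $C^\infty$ boundary $\Gamma_m := \partial \Omega_m$ with $\overline{\Omega}_m \subset \Omega$. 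The enlargement in Remark~\ref{rem:dilatable_congruent_covering} (graph representation on the dilated cylinder $3Z_j$) implies uniform transversality of $\Gamma$ to the vertical axis of $Z_j$ throughout $3Z_j$, and this transversality is stable under small perturbations, so the implicit function theorem yields that $\Gamma_m \cap 3Z_j$ is the graph of some $\eta_j^{(m)} \in C^\infty(\mathbb{R}^2)$ over the same coordinates, inheriting the convergence properties from the local mollification. Define $\Lambda_m: \Gamma \to \Gamma_m$ by vertical projection in each coordinate cylinder, extended across overlaps via the partition of unity. Then $|\vx - \Lambda_m(\vx)| \leq |\eta_j^{(m)}(\vx^{(j)}) - \eta_j(\vx^{(j)})| \to 0$ uniformly, giving \eqref{eq:uniform_convergence_lambda_m}, and vertical approach combined with transversality delivers the non-tangential character required in (i). The a.e.\ convergence of the normals $\nvecm(\Gamma_m(\hat\vx)) = (1+|\nabla\eta_j^{(m)}|^2)^{-1/2}(\nabla\eta_j^{(m)}(\hat\vx)^\top,-1)^\top$ to $\vn(\vx)$ follows from the a.e.\ convergence of $\nabla\eta_j^{(m)}$, and the Jacobian $\omega_m(\vx) = \sqrt{1+|\nabla\eta_j^{(m)}|^2}/\sqrt{1+|\nabla\eta_j|^2}$ is squeezed between two positive constants depending only on $L_j$ while tending to $1$ almost everywhere, establishing (iii) and (iv).

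The main obstacle is the global patching, where three constraints must be reconciled at once: that the assembled $\Gamma_m$ is genuinely $C^\infty$ (not merely $C^1$) after gluing across cylinders, that it remains strictly inside $\Omega$, and that it admits smooth graph representations over the \emph{dilated} cylinders $3Z_j$ and not only over $Z_j$. The global smoothness requires a careful construction of the defining function $F_m$ via a smooth partition of unity together with the strict inequalities $\eta_j^{(m)} > \eta_j$ preserved across all cylinders; the graph representations over $3Z_j$ rely on the transversality from Remark~\ref{rem:dilatable_congruent_covering} surviving perturbations of amplitude at most $\delta_m + \varepsilon_m L_j$, which is ensured by taking $\varepsilon_m$ and $\delta_m$ small compared with the strict clearance built into the enlarged cylinder condition. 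Once these geometric points are settled, the remaining assertions of the theorem reduce to routine mollification estimates.
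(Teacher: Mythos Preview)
The paper does not prove this theorem; it is quoted from the literature with references to Brown~\cite{Bro1989} and Verchota~\cite{Ver1982,Ver1984}, and the subsequent remark explicitly states that ``a proof of the theorem can be found in \cite[Theorem~A.1]{Ver1982}.'' Your proposal outlines precisely the Verchota construction that the paper is citing---mollification of the local Lipschitz graphs, a vertical shift to push the smoothed surface strictly inside $\Omega$, and a global patching step---so in spirit you are reproducing the referenced proof rather than offering an alternative.

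That said, two points in your sketch deserve tightening if you were to write this out in full. First, the ``truncation'' you propose to enforce $\|\eta_j^{(m)}\|_{L^\infty}\le\|\eta_j\|_{L^\infty}$ would destroy the $C^\infty$ regularity you just obtained; in Verchota's argument the $L^\infty$ control (and, more importantly, the gradient bound $\|\nabla\eta_j^{(m)}\|_{L^\infty}\le L_j$ actually used in the proof of Theorem~\ref{thm:second_bilinear_form_tensor_case}) comes directly from the convolution since $\rho_{\varepsilon_m}\ast\eta_j$ is an average of values of $\eta_j$. Second, your description of $\Lambda_m$ as ``vertical projection in each coordinate cylinder, extended across overlaps via the partition of unity'' is not well-defined: vertical directions differ between cylinders, and one cannot average homeomorphisms with a partition of unity. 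In the actual construction $\Lambda_m$ is built by flowing along a fixed smooth transversal vector field in a tubular neighbourhood of $\Gamma$ (or, equivalently, by the global defining function $F_m$ you introduce), which is what makes the map a genuine homeomorphism and guarantees the non-tangential approach uniformly across overlaps.
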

\begin{remark}
  In item \emph{(i)} of Theorem~\ref{thm:approximating_domains} we stated that $\Lambda_m(\vx)$ approaches $\vx$ non-tangentially. Roughly speaking this means that the points $\Lambda_m(\vx)$ all lie in a cone centered at $\vx$. A rigorous definition is given in \cite{Bro1989,Ver1984}. Some additional properties of the approximating domains $\{\Omega_m\}_m$ are given in the referenced works, and a proof of the theorem can be found in \cite[Theorem~A.1]{Ver1982}.
\end{remark}

\ifshort
  \subsection{Postponed proofs} 
\else
  \subsection{Postponed proofs and results} 
\fi
\label{sec:appendix_proofs}
Here we collect the postponed proofs in the order in which they appeared in the paper.
\begin{proof}[Sketch of the proof of Proposition \ref{prop:bilinear_form_d}]
  Let $u$ and $v$ be in $C^\infty_c(\overline{\Omega}\times(0,T])$ and $C^\infty_c(\overline{\Omega}\times[0,T))$, respectively. We can extend $u$ to a function $\widetilde{u} \in H^1(\mathbb{R};L^2(\Omega))$ by setting $\widetilde{u}(t,\cdot)=0$ for all $t<0$ and then setting $\widetilde{u}(\cdot,t)=\widetilde{u}(\cdot,2T-t)$ for all $t>T$. For this extension one can show that
  \begin{equation*}
    \|\widetilde{u}\|_{H^{1/2}(\mathbb{R};L^2(\Omega))} \leq c(T) \|u\|_{H^{1,1/2}_{;0,}(Q)},
  \end{equation*}
  where the constant $c(T)$ does only depend on $T$. Similarly, we can extend $v$ to a function $\widetilde{v} \in H^{1}(\mathbb{R};L^2(\Omega))$ such that 
  \begin{equation*}
    \|\widetilde{v}\|_{H^{1/2}(\mathbb{R};L^2(\Omega))} \leq c(T) \|v\|_{H^{1,1/2}_{;,0}(Q)}.
  \end{equation*}
  Since $u$ and $v$ are compactly supported in $\overline{\Omega}\times(0,T]$ and $\overline{\Omega}\times[0,T)$, respectively, there holds 
  \begin{equation} \label{eq:boundedness_d}
    \begin{split}
      d(u,v) &= \int_0^T \int_\Omega \frac{\partial u}{\partial t} v \,\dif\vx\,\dif t
      = \int_\mathbb{R} \int_\Omega \frac{\partial \widetilde{u}}{\partial t} \widetilde{v} \, \dif\vx\,\dif t
        \leq c \|\widetilde{u}\|_{H^{1/2}(\mathbb{R};L^2(\Omega))} \|\widetilde{v}\|_{H^{1/2}(\mathbb{R};L^2(\Omega))}\\
        &\leq c_d(T) \|u\|_{H^{1,1/2}_{;0,}(Q)} \|v\|_{H^{1,1/2}_{;,0}(Q)}.
    \end{split}
  \end{equation}
  The first estimate here can be shown by switching to the Fourier domain in time using Plan\-che\-rel's theorem, where the estimate follows easily when considering the equivalent norms in~$H^{1/2}(\mathbb{R};L^2(\Omega))$ defined via Fourier transforms. The second estimate is a consequence of the two estimates above. By a density result similar to the one in the sketch of the proof of Proposition~\ref{prop:density_anisotropic_Q} the assertion follows.
\end{proof}

For the next proof we need to introduce another anisotropic Sobolev space, namely
\begin{equation*}
  H^{1,1/2}_{0;}(Q) := L^2(0,T;H^1_0(\Omega)) \cap H^{1/2}(0,T;L^2(\Omega)),
\end{equation*}
which can be understood as the subspace of $H^{1,1/2}(Q)$ whose functions vanish on~$\Sigma$. In a similar way as in Proposition~\ref{prop:density_anisotropic_Q} it can be shown that $C^\infty_c(Q)$ is dense in $H^{1,1/2}_{0;}(Q)$.
\begin{proof}[Proof of Proposition~\ref{prop:surface_curl}] 
  We start by showing that the definition in \eqref{eq:def_scurl} is independent of the extension $\rinvdtrace \vpsi$ of the test function $\vpsi \in \mathbf{H}^{1/2,1/4}(\Sigma)$. For this purpose, let $\widetilde{\vpsi}_1$ and $\widetilde{\vpsi}_2$ in $\mathbf{H}^{1,1/2}(Q)$ denote two extensions of $\vpsi$ to~$Q$. Then, the difference $\widetilde{\vpsi}_1 - \widetilde{\vpsi}_2$ is in $\mathbf{H}^{1,1/2}_{0;}(Q)$ and therefore
  \begin{equation} \label{eq:scurl_independence_extension_1}
    \langle \nabla u, \curl \widetilde{\vpsi}_1 - \curl \widetilde{\vpsi}_2 \rangle_{\mathbf{L}^2(Q)}  = 0
  \end{equation}
  for all $u \in H^{1,1/2}(Q)$. Indeed, integration by parts yields
  \begin{equation*}
    \langle \nabla u, \curl \vw \rangle_{\mathbf{L}^2(Q)} = \langle u, \vn\cdot \curl \vw \rangle_\Sigma - \langle u, \diver (\curl \vw) \rangle_{L^2(Q)} = 0
  \end{equation*}
  for $\vw \in \mathbf{C}^\infty_c(Q)$ and thus \eqref{eq:scurl_independence_extension_1} follows from the density of $\mathbf{C}^\infty_c(Q)$ in $\mathbf{H}^{1,1/2}_{0;}(Q)$. This proves that \eqref{eq:def_scurl} is independent of the extension $\rinvdtrace \vpsi$ of $\vpsi$.

  Similarly, we conclude that \eqref{eq:def_scurl} is independent of the extension $\rinvdtrace \varphi$ of $\varphi$ by using that
  \begin{equation*}
    \langle \nabla u, \curl \vw \rangle_{\mathbf{L}^2(Q)} = \langle \nabla u \times \vn, \vw \rangle_\Sigma + \langle \curl (\nabla u), \vw \rangle_{\mathbf{L}^2(Q)} = 0
  \end{equation*}
  for all $u \in C^\infty_c(Q)$ and $\vw \in \mathbf{H}^{1,1/2}(Q)$.

  To see that $\scurl \varphi \in \mathbf{H}^{-1/2,-1/4}(\Sigma)$ and that $\scurl$ is continuous as a mapping from $H^{1/2,1/4}(\Sigma)$ to $\mathbf{H}^{-1/2,-1/4}(\Sigma)$ we estimate 
  \begin{align*}
    |\langle \scurl \varphi, \vpsi \rangle_\Sigma|
      &= |\langle \nabla \rinvdtrace \varphi, \curl( \rinvdtrace \vpsi) \rangle_{\mathbf{L}^2(Q)}| 
      \leq \| \nabla \rinvdtrace \varphi \|_{\mathbf{L}^2(Q)} \| \curl( \rinvdtrace \vpsi) \|_{\mathbf{L}^2(Q)} \\
      &\leq c\, \| \rinvdtrace \varphi \|_{H^{1,1/2}(Q)} \| \rinvdtrace \vpsi \|_{\mathbf{H}^{1,1/2}(Q)}
       \leq c \, c_\mathrm{IT}^2 \| \varphi \|_{H^{1/2,1/4}(\Sigma)} \| \vpsi \|_{\mathbf{H}^{1/2,1/4}(\Sigma)},
  \end{align*}
  where $c_\mathrm{IT}$ denotes the boundedness constant of the extension operators $\rinvdtrace$.

  The only thing left to show is \eqref{eq:scurl_smooth_functions} for $\widetilde{\varphi} \in C^2(\overline{Q})$ and $\varphi = \widetilde{\varphi}|_{\Sigma}$. Since the definition of~$\scurl$ in~\eqref{eq:def_scurl} is independent of the extension of $\varphi$ we can use the particular extension $\widetilde{\varphi}$ to get
  \begin{align*}
    \langle \scurl \varphi, \vpsi \rangle_\Sigma 
    &= \langle \nabla \widetilde{\varphi}, \curl( \rinvdtrace\vpsi) \rangle_{\mathbf{L}^2(Q)}
    = \langle \nabla \widetilde{\varphi} \times \vn, \vpsi \rangle_\Sigma 
      + \langle \curl (\nabla \widetilde{\varphi}), \rinvdtrace \vpsi \rangle_{\mathbf{L}^2(Q)} \\
    &= \langle \nabla \widetilde{\varphi} \times \vn, \vpsi \rangle_\Sigma
  \end{align*}
  for all $\vpsi \in \mathbf{H}^{1/2,1/4}(\Sigma)$, where we used integration by parts in the second step. In particular, $\scurl \varphi = \nabla \widetilde{\varphi} \times \vn$ in $\mathbf{H}^{-1/2,-1/4}(\Sigma)$.
\end{proof}

\begin{proof}[Proof of Lemma \ref{lem:curl_and_scurl}]
  For all $\vw \in \mathbf{C}^\infty_c(\mathbb{R}^3\times \mathbb{R})$ there holds
  \begin{equation*}
    \curl ( \dtrace )'(\varphi \vn )[\vw] = ( \dtrace )'(\varphi \vn )[\curl \vw]
      = \int_0^T \int_\Gamma \varphi(\vx,t) \vn(\vx)\cdot \curl \vw(\vx,t) \,\dif\vs_\vx\,\dif t.  
  \end{equation*}
  Let us first assume that $\varphi \in \dtrace(C^\infty_c(\overline{\Omega} \times (0,T)))$, i.e.~there exists $\widetilde{\varphi} \in C^\infty_c(\overline{\Omega} \times (0,T))$ such that $\varphi=\widetilde{\varphi}|_{\Sigma}$. Then we can rewrite
  \begin{equation*}
    \varphi(\vx,t) \curl \vw(\vx,t) 
      = \curl( \widetilde{\varphi} \vw)(\vx,t) 
      - \nabla \widetilde{\varphi}(\vx,t) \times \vw(\vx,t)
  \end{equation*}
  for all $\vx \in \Gamma$ and $t \in (0,T)$. By inserting this into the previous equation we get
  \begin{align*}
    \int_0^T \int_\Gamma \vn(\vx)\cdot \curl( \widetilde{\varphi} \vw)(\vx,t) \,\dif\vs_\vx\,\dif t
    - \int_0^T \int_\Gamma \vn(\vx)\cdot (\nabla \widetilde{\varphi}(\vx,t) \times \vw(\vx,t) ) 
      \,\dif\vs_\vx\,\dif t  
  \end{align*}
  The first integral vanishes, which follows by applying the divergence theorem and using that $\diver  \curl( \widetilde{\varphi} \vw) = 0$. The integrand of the second integral is
  \begin{equation*}
    \vn(\vx)\cdot (\nabla \widetilde{\varphi}(\vx, t) \times \vw(\vx, t) )
      = - \vw(\vx,t) \cdot (\nabla \widetilde{\varphi}(\vx,t) \times \vn(\vx))
      = - \vw(\vx,t) \cdot (\scurl \varphi)
  \end{equation*}
  where we used \eqref{eq:scurl_smooth_functions} in the last step. Hence,
  \begin{equation*}
    \curl ( \dtrace )'(\varphi \vn )[\vw] 
      = \int_0^T \int_\Gamma \vw(\vx,t) \cdot (\scurl \varphi)(\vx,t) \,\dif\vs_\vx\,\dif t  
      = (\dtrace)'(\scurl \varphi)[\vw]
  \end{equation*}
  and thus \eqref{eq:curl_and_scurl} holds for all $\varphi\in \dtrace(C^\infty_c(\overline{\Omega} \times (0,T)))$.

  Let us now consider a general $\varphi \in H^{1/2,1/4}(\Sigma)$. The space $\dtrace(C^\infty_c(\overline{\Omega} \times (0,T)))$ is dense in $H^{1/2,1/4}(\Sigma)$, which follows from Proposition~\ref{prop:density_anisotropic_Q}. Therefore, we can find a sequence $\{\varphi_k\}_k$ in $\dtrace(C^\infty_c(\overline{\Omega} \times (0,T)))$ such that $\varphi_k \rightarrow \varphi$ in $H^{1/2,1/4}(\Sigma)$ as $k \rightarrow \infty$. It follows that
  \begin{equation*}
    \curl ( \dtrace )'(\varphi_k \vn ) \rightarrow \curl ( \dtrace )'(\varphi \vn )
  \end{equation*} 
  in the distributional sense as $k \rightarrow \infty$. In fact, for all $\vw \in \mathbf{C}^\infty_c(\mathbb{R}^3\times\mathbb{R})$ there holds
  \begin{align*}
    \lim_{k\rightarrow\infty} \curl( \dtrace )'(\varphi_k \vn )[\vw] 
    &= \lim_{k\rightarrow\infty} \int_0^T \int_\Gamma \varphi_k(\vx,t) \vn(\vx)\cdot \curl \vw(\vx,t) \,\dif\vs_\vx\,\dif t \\
    &= \lim_{k\rightarrow\infty} \langle \varphi_k, \vn \cdot \curl \vw \rangle_{L^2(\Sigma)} 
    = \langle \varphi, \vn \cdot \curl \vw \rangle_{L^2(\Sigma)} \\
    &= \curl( \dtrace )'(\varphi \vn )[\vw]. 
  \end{align*}
  At the same time 
  \begin{align*}
    \lim_{k\rightarrow\infty} \curl( \dtrace )'(\varphi_k \vn )[\vw] 
      &= \lim_{k\rightarrow\infty} ( \dtrace )'(\scurl \varphi_k )[\vw]
      = \lim_{k\rightarrow\infty} \langle \scurl \varphi_k, \dtrace \vw \rangle_\Sigma \\
      &= \langle \scurl \varphi, \dtrace \vw \rangle_\Sigma
      = ( \dtrace )'(\scurl \varphi )[\vw],
  \end{align*}
  for all $\vw \in \mathbf{C}^\infty_c(\mathbb{R}^3\times\mathbb{R})$ due to the continuity of $\scurl$ in $H^{1/2,1/4}(\Sigma)$. Hence
  \begin{equation*}
    \curl( \dtrace )'(\varphi \vn ) = (\dtrace)'(\scurl \varphi) 
  \end{equation*}
  holds for general $\varphi \in H^{1/2,1/4}(\Sigma)$.
\end{proof}

\ifshort
\else
  We conclude the section with a result, which implies in particular the local smoothness of the convolution of a distribution with support in $\overline{\Sigma}$ and the fundamental solution of the heat equation $G_\alpha$ in \eqref{eq:def_fundamental_solution}.
  \begin{proposition} \label{prop:convolution_locally_regular_distribution}
    Let $\Omega$ be a Lipschitz domain with boundary $\Gamma$, $T>0$, $Q=\Omega \times (0,T)$ and $\Sigma=\Gamma \times(0,T)$. Let $R \in \mathcal{D}'(\mathbb{R}^3\times\mathbb{R})$ with $\supp(R)\subset \overline{\Sigma}$. Let $S_g$ be a regular distribution generated by a function $g \in C^\infty(\mathbb{R}^4\backslash \vzero) \cap L^1_\mathrm{loc}(\mathbb{R}^3 \times \mathbb{R})$. Let $A$ be an open set such that $\overline{A} \subset \Omega$ and denote $\beta_A := \dist(A,\Gamma)>0$. Then the restriction $(R\ast S_g)|_{A \times (0,T)}$ is a regular distribution. In particular, $(R \ast S_g)|_{A\times(0,T)}$ is generated by the function 
    \begin{equation} \label{eq:generating_function_local_smooth_distribution}
      (R \ast (f_A g))|_{A \times (0,T)} \in C^\infty(A\times(0,T)),
    \end{equation}
    where $f_A \in C^\infty(\mathbb{R}^3\times\mathbb{R})$ is such that $0 \leq f_A \leq 1$, $f_A(\vx,t) = 0$ for all $(\vx,t) \in B_{\beta_A/4}(\vzero) \times \mathbb{R}$, and $f_A(\vx,t)=1$ for all $(\vx,t) \notin B_{\beta_A/2}(\vzero) \times \mathbb{R}$. Here, $B_{\rho}(\vzero)$ denotes the open ball with radius $\rho$ centered at $\vzero \in \mathbb{R}^3$.
  \end{proposition}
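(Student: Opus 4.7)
The plan is to split $g$ using the cut-off $f_A$, reducing the statement to two pieces: a smooth piece $f_A g$ for which the convolution is classically a smooth function, and a piece $(1-f_A)g$ supported away from the points of interest, whose convolution with $R$ vanishes on $A\times(0,T)$. Crucially, since $\overline{\Sigma} = \Gamma \times [0,T]$ is compact, we have $R \in \mathcal{E}'(\mathbb{R}^3 \times \mathbb{R})$, so all convolutions with $R$ are well-defined, and we may apply $R$ to any $C^\infty$ function on $\mathbb{R}^3\times\mathbb{R}$.

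First I would verify that $f_A g \in C^\infty(\mathbb{R}^3\times \mathbb{R})$. Indeed, $g$ is smooth off the origin and $f_A$ vanishes on an open neighborhood of the origin, so the product is smooth everywhere. A standard result for convolution of a compactly supported distribution with a $C^\infty$ function (cf.\ \cite[Theorem~27.3]{Tre1970} and the differentiation properties in Proposition~\ref{prop:properties_convolution_distribution}) then gives that $R \ast (f_A g) \in C^\infty(\mathbb{R}^3\times\mathbb{R})$ and that $R \ast S_{f_A g}$ is precisely the regular distribution induced by this smooth function. This dispatches the smooth piece globally, not just on $A\times(0,T)$.

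The main work is to show that the remainder $R \ast S_{(1-f_A)g}$ vanishes as a distribution on $A\times(0,T)$. Let $w \in C^\infty_c(A\times(0,T))$ and, using the definition~\eqref{eq:def_convolution_distributions}, write
\begin{equation*}
  (R \ast S_{(1-f_A)g})[w] = R_{(\vy,\tau)}\!\left[F(\vy,\tau)\right], \qquad
  F(\vy,\tau) := \int_{\mathbb{R}^4} (1-f_A)(\vx'-\vy)\,g(\vx'-\vy,t'-\tau)\,w(\vx',t')\,\dif\vx'\dif t',
\end{equation*}
after the change of variables $\vx' = \vx+\vy$, $t' = t+\tau$. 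Differentiating under the integral sign (legitimate because $w \in C^\infty_c$ and the rest lies in $L^1_\mathrm{loc}$) shows $F \in C^\infty(\mathbb{R}^3\times\mathbb{R})$. The key geometric observation is that $(1-f_A)(\vx'-\vy) \neq 0$ forces $|\vx'-\vy| < \beta_A/2$, whereas $\vx' \in \supp(w) \subset A$ and $\vy$ in any neighborhood $U$ of $\Gamma$ with $\dist(U,A) > \beta_A/2$ forces $|\vx'-\vy| > \beta_A/2$. Such a neighborhood $U$ exists because $\dist(\Gamma,A) = \beta_A > \beta_A/2$ and $\dist(\cdot, A)$ is continuous. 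Hence $F \equiv 0$ on $U \times \mathbb{R}$, which is an open neighborhood of $\overline{\Sigma} \supset \supp(R)$. By the standard fact that $R[F] = 0$ whenever the smooth function $F$ vanishes on an open neighborhood of $\supp(R)$, we obtain $(R \ast S_{(1-f_A)g})[w] = 0$.

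Combining the two pieces via linearity of the convolution, $R \ast S_g = R \ast S_{f_A g} + R \ast S_{(1-f_A)g}$, the second summand vanishes on $A\times(0,T)$ and the first is the regular distribution generated by $R \ast (f_A g) \in C^\infty(\mathbb{R}^3\times \mathbb{R})$, whose restriction to $A\times(0,T)$ is of course in $C^\infty(A\times(0,T))$. This yields \eqref{eq:generating_function_local_smooth_distribution}. The main obstacle is the geometric separation argument in the middle paragraph: one must simultaneously control both the support condition from $1-f_A$ (a separation in the \emph{difference} $\vx'-\vy$) and the localization of the test function and of $\supp(R)$, and then convert pointwise vanishing of $F$ on $\supp(R)$ into vanishing on an open neighborhood, which is where the gap $\beta_A/2 < \beta_A$ is essential.
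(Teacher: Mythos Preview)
Your proof is correct and rests on the same geometric separation as the paper's: the cutoff $f_A$ and the gap $\beta_A/2<\beta_A$ between $A$ and $\Gamma$. The organization differs. The paper does a single direct computation of $(R\ast S_g)[w]$ and shows that the inner application $(S_g)_{(\vy,\tau)}[\widetilde w(\cdot+\cdot)]$ coincides with $(S_{f_Ag})_{(\vy,\tau)}[\widetilde w(\cdot+\cdot)]$ on the open set $\{\dist(\vx,\Gamma)<\beta_A/2\}\supset\supp R$, and then carries out explicitly the manipulation (via $\check R$ and commutativity) identifying the result with the regular distribution induced by $R\ast(f_Ag)$. You instead split $g=f_Ag+(1-f_A)g$ upfront, invoke the standard compatibility of $R\ast S_{f_Ag}$ with the smooth convolution $R\ast(f_Ag)$, and show the remainder vanishes on $A\times(0,T)$. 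Your route is a bit more modular and avoids the $\check R$ gymnastics; the paper's route is more self-contained in that it verifies the compatibility step by hand rather than citing it.

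One small imprecision: after your change of variables the $(\vy,\tau)$-dependence of $F$ sits in $(1-f_A)g(\,\cdot-\vy,\,\cdot-\tau)$, which is \emph{not} smooth at the origin, so ``differentiating under the integral sign'' is not the right justification there. Smoothness of $F$ follows instead from the general fact (stated in the paper just before Proposition~\ref{prop:properties_convolution_distribution}) that $S_\vy[u(\cdot_\vx+\cdot_\vy)]\in C^\infty$ for $S\in\mathcal{D}'$ and $u\in C^\infty_c$; equivalently, work with the pre-change-of-variables form where the $(\vy,\tau)$-dependence is in $w$. Also, $(1-f_A)$ takes a space-time argument, so write $(1-f_A)(\vx'-\vy,t'-\tau)$ rather than $(1-f_A)(\vx'-\vy)$.
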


  \begin{proof}
    Let $w \in C^\infty_c(A \times (0,T))$ and $\widetilde{w}$ its extension by zero to $\mathbb{R}^3 \times \mathbb{R}$. Applying the distribution $(R \ast S_g)|_{A\times(0,T)}$ to $w$ gives
    \begin{equation} \label{eq:convolution_R_S_g}
      (R \ast S_g)|_{A\times(0,T)}[w] = (R \ast S_g)[\widetilde{w}] 
        = R_{(\vx,t)}[ (S_g)_{(\vy,\tau)}[ \widetilde{w}(\cdot_\vx+\cdot_\vy,\cdot_t+\cdot_\tau)]],
    \end{equation}
    with
    \begin{equation*}
      (S_g)_{\vy,\tau}[ \widetilde{w}(\vx+\cdot_\vy,t+\cdot_\tau)]
        = \int_\mathbb{R} \int_{\mathbb{R}^3} g(\vy,\tau) \widetilde{w}(\vx+\vy,t+\tau) \,\dif\vy\,\dif \tau 
    \end{equation*}
    for all $(\vx,t) \in \mathbb{R}^3\times\mathbb{R}$. Let $\vx$ and $\vy$ be such that $\dist(\vx,\Gamma) < \beta_{A}/2$ and $\vx + \vy \in A$, which is a superset of the spatial part of $\supp(\widetilde{w})$. Then $|\vy|>\beta_A/2$ and thus $f_A(\vy,\tau) = 1$ for all~$\tau \in \mathbb{R}$. In particular, we get 
    \begin{equation*}
      \int_\mathbb{R} \int_{\mathbb{R}^3} g(\vy,\tau) \widetilde{w}(\vx+\vy,t+\tau) \,\dif\vy\,\dif \tau
        = \int_\mathbb{R} \int_{\mathbb{R}^3} f_A(\vy,\tau) g(\vy,\tau) \widetilde{w}(\vx+\vy,t+\tau) \,\dif\vy\,\dif \tau
    \end{equation*}
    for all $(\vx,t)$ such that $\dist(\vx,\Gamma) < \beta_{A}/2$. This last integral can be written as 
    \begin{equation*}
      \int_\mathbb{R} \int_{\mathbb{R}^3} f_A(\vz-\vx,s-t) g(\vz-\vx,s-t) \widetilde{w}(\vz,s) \,\dif\vz\,\dif s
        = (S_{\widetilde{w}})_{\vz,s} [(f_A g)( \cdot_\vz - \vx, \cdot_s - t ) ]
    \end{equation*}
    where $S_{\widetilde{w}}$ is the regular distribution induced by $\widetilde{w}$ and its application to $(f_A g)$ is justified since $(f_A g) \in C^\infty(\mathbb{R}^3 \times \mathbb{R})$ and $\supp(S_{\widetilde{w}})=\supp(\widetilde{w})$ is compact. Hence, we have shown that 
    \begin{equation*}
      (S_g)_{\vy,\tau}[ \widetilde{w}(\vx+\cdot_\vy,t+\cdot_\tau)] 
        = (S_{\widetilde{w}})_{\vz,s} [(f_A g)( \cdot_\vz - \vx, \cdot_s - t ) ]
    \end{equation*}
    for all $(\vx,t)$ with $\dist(\vx,\Gamma) < \beta_A/2$. Since $\supp(R) \subset \overline{\Sigma}$ it follows that
    \begin{equation*}
      R_{(\vx,t)}[ (S_g)_{(\vy,\tau)}[ \widetilde{w}(\cdot_\vx+\cdot_\vy,\cdot_t+\cdot_\tau)]]
        = R_{(\vx,t)}[ (S_{\widetilde{w}})_{\vz,s} [(f_A g)( \cdot_\vz - \cdot_\vx, \cdot_s - \cdot_t ) ]].
    \end{equation*}
    Using the definition $\check{R}[u] = R[\check{u}]$ the right-hand side can be written as
    \begin{align*}
      R_{(\vx,t)}[ (S_{\widetilde{w}})_{\vz,s} &[(f_A g)( \cdot_\vz - \cdot_\vx, \cdot_s - \cdot_t ) ]]
        = \check{R}_{(\vx,t)}[ (S_{\widetilde{w}})_{\vz,s} [(f_A g)( \cdot_\vz + \cdot_\vx, \cdot_s + \cdot_t ) ]] \\
      &= (\check{R} \ast S_{\widetilde{w}})[f_A g] 
        = (S_{\widetilde{w}})_{(\vz,s)}[ \check{R}_{(\vx,t)} [(f_A g)( \cdot_\vz + \cdot_\vx, \cdot_s + \cdot_t )] ] \\
      &= (S_{\widetilde{w}})_{(\vz,s)}[ R_{(\vx,t)} [(f_A g)( \cdot_\vz - \cdot_\vx, \cdot_s - \cdot_t )] ]
        = (S_{\widetilde{w}})[ R \ast (f_A g)].
    \end{align*}
    The last two equations together with \eqref{eq:convolution_R_S_g} show that 
    \begin{equation*}
      (R \ast S_g)|_{A\times(0,T)}[w] = (S_{\widetilde{w}})[ R \ast (f_A g)]
        = \int_0^T \int_A w(\vy,\tau) (R \ast (f_A g))(\vy,\tau) \,\dif\vy\,\dif \tau 
    \end{equation*}
    for all $w \in C^\infty_c(A \times (0,T))$.
  \end{proof}

  \begin{remark} \label{rem:convolution_local_nonsmooth}
    The values of $(R \ast (f_A g))|_{A \times (0,T)}$ do not depend on $f_A$. In fact the function~$f_A$ in~\eqref{eq:generating_function_local_smooth_distribution} is only a technical tool to smoothen the function $g$ around $\vzero$, which allows us to consider it as a function in $C^\infty(\mathbb{R}^3\times \mathbb{R})$. In particular, we can define the function $(R \ast g)|_{A \times (0,T)}$ in $C^\infty(A \times (0,T))$ via
    \begin{equation} \label{eq:def_convolution_local_nonsmooth}
      (R \ast g)|_{A \times (0,T)} := (R \ast (f_A g))|_{A \times (0,T)}
    \end{equation}
    for such a function $f_A$. Since $A$ is an arbitrary open set satisfying $\overline{A} \subset \Omega$ this definition allows us to interpret $(R \ast g)$ as a function in $C^\infty(Q)$. 
  \end{remark}
\fi

\end{document}